\newtheorem{theorem}{Theorem}[section]
\newtheorem{lemma}[theorem]{Lemma}
\newtheorem{corollary}[theorem]{Corollary}
\theoremstyle{definition}
\theoremstyle{remark}
\newtheorem{remark}[theorem]{Remark}
\numberwithin{equation}{section}
\begin{document}

\title[On the stability phenomenon of the Navier-Stokes type Equations ]
      {On the stability phenomenon  \\ of the Navier-Stokes type Equations \\ for  
			Elliptic Complexes} 
			
\author[A. Parfenov, A. Shlapunov]{Andrei Parfenov, Alexander Shlapunov}
\address{Siberian Federal University,
         Institute of Mathematics and Computer Science,
         pr. Svobodnyi 79,
         660041 Krasnoyarsk,
         Russia}

\email{ashlapunov@sfu-kras.ru, testforwisdom@mail.ru}

\subjclass [2010] {Primary 76D05; Secondary 58J10, 58J35}

\keywords{Navier-Stokes type equations, elliptic complexes, H\"older spaces}


\begin{abstract}
Let ${\mathcal X}$ be a Riemannian $n$-dimensional smooth compact closed manifold, $n\geq 2$, 
$E^i$ be smooth vector bundles over $\mathcal X$ and  $\{A^i,E^i\}$ be an elliptic 
differential complex of linear first order operators.  We consider the operator equations, 
induced by the Navier-Stokes type equations associated with $\{A^i,E^i\}$  on the scale 
of anisotropic H\"older spaces over the layer ${\mathcal X} \times [0,T]$  with finite time 
$T > 0$. Using the properties of the differentials $A^i$ and parabolic 
operators  over this scale of spaces, we reduce the equations to a nonlinear 
Fredholm operator equation of the form 
$(I+K) u = f$, where $K$ is a compact continuous operator. It appears that the Fr\'echet 
derivative $(I+K)'$ is continuously invertible at every point of each Banach space under 
the consideration and the map $(I+K)$ is open and injective in the space. 
\end{abstract}

\maketitle

\section*{Introduction}
\label{s.0}


The problem of describing the dynamics of incompressible viscous fluid is of great importance 
in applications. Despite enormous efforts of many mathematicians, the existence 
theorem for classical solutions was proved for two-dimensional case only (see, for 
instance, \cite{Lera34a}, \cite{Lera34b}, \cite{Kolm42}, \cite{Hopf51}, \cite{Lady70}, 
\cite{FursVish80} among essential contributions).

In this paper we focus attention on the stability 
phenomenon for the Navier-Stokes Equations discovered by O.A. 
Ladyzhenskaya. Namely, she proved (see \cite[Theorems 10 and 11]{Lady70})  
that if for a rather regular datum there is a sufficiently regular 
unique solution to the Navier-Stokes equations then for all sufficiently 
small perturbations of the datum there are unique solutions of the same 
regularity. For infinitely smooth data and solutions this phenomenon 
was indicated in \cite{Gal13}  in the case of zero exterior forces. 
Recently, the phenomenon was verified 
for the Navier-Stokes equations on the scale of anisotropic 
H\"older spaces over the layer ${\mathbb R}^n \times [0,T]$, 
weighted at the infinity with respect to the space variables, see \cite{ShlTa18}. 

We want to investigate the stability property in the context of Navier-Stokes type 
equations associated with elliptic differential complexes, 
see \cite{MTaSh19}. Namely, let ${\mathcal X}$ be a Riemannian 
$n$-dimensional smooth compact closed manifold with metric $\mathfrak g$ and let 
$E^i$ be smooth  vector bundles over $\mathcal X$. 
Let $C^\infty_{E^i} (\mathcal X)$ denote the space of all smooth sections of the bundle 
$E^i$. Consider an elliptic complex
\begin{equation}
\label{eq.ellcomp}
   0
 \longrightarrow
   C^\infty_{E^0} ({\mathcal X})
 \stackrel{A^0}{\longrightarrow}
   C^\infty _{E^1}({\mathcal X})
 \stackrel{A^1}{\longrightarrow}
   \ldots
 \stackrel{A^{N-1}}{\longrightarrow}
   C^\infty _{E^N}({\mathcal X})
 \longrightarrow
   0
\end{equation}
of first order differential operators $A^i$ on $\mathcal X$, 
see for instance, \cite{Tark95a} or \cite[\S 10.4.3]{Nic07}. This means that 
$A^{i+1} \circ A^i \equiv 0$ and the Laplacians 
$\Delta^i = (A^{i})^* A^i +  A^{i-1} (A^{i-1})^* $ 
are the second order strongly elliptic operators at each step $i$, $0\leq i \leq N$, 
where $(A^i)^*$ is the formal adjoint differential operator for $A^i$; 
here we tacitly assume that $A_i=0$ for $i<0$ or $i>N-1$. 

If the variable $t$ enters to a section of the bundle $E^i$ as a parameter, then we 
easily may define the operator $\partial_t$ acting on sections of the induced bundle $E^i (t)$ 
over the cylinder ${\mathcal X}\times [0,+\infty)$. Then the second order operators 
$L_\mu^i = \partial _t + \mu \Delta^i$ are parabolic on ${\mathcal X}\times [0,+\infty)$ 
for each positive number $\mu$, see, for instance, \cite{Eide90}. 

 Fixing two  bilinear mappings ${\mathcal M}_{i,j}$, satisfying 
\begin{equation} \label{eq.nonlinear.M}
{\mathcal M}_{i,1,x}: E^{i+1}_x \otimes E^i_x  \to E^{i}_x, \,\,
{\mathcal M}_{i,2,x}: E^i_x \otimes E^{i}_x\to E^{i-1}_x, 
\end{equation} 
at each point $x\in \mathcal X$, we set for a differentiable section $v$ of the bundle 
$E^i$:
\begin{equation} \label{eq.nonlinear}
{\mathcal N}^i (v) (x) = {\mathcal M}_{i,1,x} ((A^{i} v) (x),v(x)) + A^{i-1} 
{\mathcal M}_{i,2} (v(x),v(x))  .
\end{equation}  

In this paper we consider the following initial problem over the cylinder 
${\mathcal X}_T={\mathcal X}\times [0,T]$ with a finite time $T>0$: 
given section $f$ of the induced bundle 
$E^i (t)$ and section $v_0$ the bundle $E^i$, find a section $v$ of the induced bundle 
$E^i (t)$ and a section $p$ of the induced bundle $E^{i-1} (t)$ such that 
\begin{equation}
\label{eq.NS.complex}
\left\{
\begin{array}{lcl}
	L_\mu^i v  + {\mathcal N} ^i (v) + A^{i-1} p  =
  f & \mbox{ in } & \mathcal X\times (0,T), \\
   (A^{i-1})^*\, v  =0 , \,\,  (A^{i-2})^*\, p  =0 & \mbox{ in }  & \mathcal X\times [0,T], \\
v (x,0)= v_0 & \mbox{ in }  & \mathcal X.\\
 \end{array}
\right. 
\end{equation}
We note that the gradient operator $\nabla$, the rotation operator 
$\mbox{rot}$ and divergence operator $\mbox{div}$ represent the operators $d^i$ 
included to the de Rham complex 
and acting as the differentials between the bundles $\Lambda^i$ and $\Lambda^{i+1}$ 
of exterior differential forms over ${\mathbb R}^3$ of degrees $i$ and 
$i+1$, $i=0,1,2$, respectively. Let us express the standard non-linearity 
${\mathcal N}^1 (v) = v\cdot \nabla v$ 
in the so-called Lamb form (see \cite[\S~15]{LaLi}): 
\begin{equation} \label{eq.Lamb}
{\mathcal N}^1 (v) = v\cdot \nabla v  =(\mbox{rot} \, v)\otimes v + \nabla |v|^2/2.
\end{equation}
Taking the $3$-dimensional torus ${\mathbb T}^3$ as 
$\mathcal X$, the de Rham complex $\{d^i, \Lambda^i\}_{i=0}^2$ over it and choosing 
$i=1$ and ${\mathcal N}^1 (v) =v\cdot \nabla v $, 
 we may treat \eqref{eq.NS.complex} as the 
initial problem for the Navier-Stokes equations for incompressible fluid in the so-called 
periodic setting, with the dynamical viscosity $\mu$ of the fluid under consideration, the 
density vector of outer forces $f$, the initial velocity $v_0$ and the search-for velocity 
vector field $v$ and the 'pressure' $p$ of the flow, see, for instance, \cite{Tema95}; 
for $i=1$ we have $(d^{-1})^*\equiv 0$ and the pressure $p$ is not a subject for additional 
equation in this case. 

The first paper to consider the Navier-Stokes equations on Riemannian manifolds is the
classical paper \cite{EbinMars70} (see also 
\cite{Tayl10}, \cite{ChanCzub13},  \cite{Lich16} for the development of the story, 
where the authors deal precisely with the issue of non-uniqueness for the Navier-Stokes
equations on manifolds). However, we do not discuss here relations of \eqref{eq.NS.complex} 
to the Hydrodynamics and thus,  under the imposed restrictions, 
we obtain the uniqueness of solutions to \eqref{eq.NS.complex}. Then we reduce 
\eqref{eq.NS.complex} to a nonlinear 
Fredholm operator equation  of the form $(I+K_i) u = f$ on a scale of anisotropic 
H\"older type Banach spaces over the cylinder ${\mathcal X}\times [0,T]$, where $K_i$ is a 
compact continuous operator. It appears that the Fr\'echet 
derivative $(I+K_i)'$ is continuously invertible at every point of each Banach space under 
the consideration and the map $I+K_i$ is open and injective. 

\section{The anisotropic H\"older spaces}
\label{s.HoelderSpaces}

We begin with the definition of proper function spaces. 

Since $\mathcal X$ is a compact closed Riemannian manifold, 
choosing a volume form $dx$ on $\mathcal X$ and a Riemannian metric $(\cdot,\cdot)_{x,i}$ in 
the fibres of $E^i$, we equip each bundle $E^i$ with a smooth bundle homomorphism
   $\ast_i : E^i \to E^i{}^\ast$
defined by
   $\langle \ast_i u, v \rangle_{x,i} = (v,u)_{x,i}$ for $u, v \in E^i_x$,
and the space $C^\infty_{E^i} (\mathcal X)$  
with the unitary structure
\begin{equation*}
   (u,v)_i = \int_{\mathcal X} (u,v)_{x,i} dx
\end{equation*}
giving rise to the Hilbert space $L^2_{E^i} ({\mathcal X})$ with the norm 
$\|u\|_i = \sqrt{(u,u)_i}$. 

As usual,  we say that $(A^i)^\ast$ is the formal adjoint  differential 
operator for $A^i$ if, for all $u \in C^\infty _{E^i} ({\mathcal X}) $ and 
$v \in C^\infty _{E^{i+1}}({\mathcal X}) $, 
\begin{equation*}
  (A^i u,v)_{i+1} =(u,(A^i)^* v)_{i} .
\end{equation*}  

Besides, the Riemannian metric $\mathfrak g$  defines a natural metric structure on 
$\mathcal X$. Thus, for any smooth vector bundle $E$ over $X$ equipped with a metric 
${\mathfrak h}$ and compatible connection $\nabla$ we  may introduce the 
spaces of $s$ times continuously differentiable sections of $E$ for $s\in {\mathbb Z}_+$ 
and the H\"older spaces $C^{s,\lambda}_{E} ({\mathcal X})$ with $0<\lambda<1$, 
see, for instance, \cite[Ch. 10]{Nic07}. These are known to be Banach spaces with the 
norms:
\begin{equation*}
   \| u \|_{C^{s,\lambda}_{E} (\mathcal{X})} = \| u \|_{C^{s,0} _{E}(\mathcal{X})}
 + \mbox{sign}(\lambda) \sum_{j=0}^{s} \langle \nabla ^j u \rangle_{\lambda, \mathcal{X},E}, 
\end{equation*}
where
\begin{equation*}
   \| u \|_{C^{s,0}_{E} (\mathcal{X})}  = \sum_{j=0}^s
   \sup_{x \in \mathcal{X}}
   |\nabla ^j u (x)|, \quad
   \langle u \rangle_{\lambda,\mathcal{X},E}
 = \sup_{x,y \in \mathcal{X}, x \neq y \atop  d(x,y)\leq d_0}
   \frac{|u (x) - u (y)|}{d^\lambda (x,y)}.
\end{equation*}
Here $d(x,y)$ is the geodesic distance between points $x,y\in {\mathcal X}$, 
$d_0 = \min{(1,d_{\mathcal X})}$, $d_{\mathcal X}$ is the injectivity radius of $\mathcal X$, 
providing that the points $x,y$ can be connected by a unique minimal geodesic $\Gamma_{x,y}$, 
and, for each $\zeta\in E_x$, $\eta \in E_y$, 
\begin{equation*}
|\eta -\zeta| = |\zeta -T_{x,y}\eta |_x = |\eta -T_{y,x}\zeta |_y
\end{equation*}
with $\nabla$-parallel transport $T_{x,y}: E_y \to E_x$ along the geodesic 
$\Gamma_{x,y}$. 
In this way the space $C^{s,\lambda}_{E} ({\mathcal X})$ is independent on the 
metrics $\mathfrak g$, ${\mathfrak h}$ and the connection $\nabla$ as 
the set of sections, see  \cite[Theorem 10.2.36]{Nic07}. 
It is also known that these Banach spaces admits the standard embedding theorems.
\begin{theorem}
\label{t.emb.hoelder} 
Suppose that
   $s, s' \in \mathbb{Z}_{+}$, and
   $\lambda, \lambda' \in [0,1)$.
If $s + \lambda \geq s' + \lambda'$ then the space
   $C^{s,\lambda}_{E}({\mathcal X})$ is embedded continuously into the space
   $C^{s',\lambda'}_{E}({\mathcal X}) $.
Moreover, the embedding is compact if
   $s + \lambda > s' + \lambda'$. 
\end{theorem}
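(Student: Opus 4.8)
The plan is to reduce the statement on the compact closed manifold $\mathcal X$ to the classical Hölder embedding theorem on Euclidean domains by means of a finite coordinate cover, and then patch the local estimates together with a partition of unity. First I would fix a finite atlas $\{(U_\alpha,\varphi_\alpha)\}_{\alpha=1}^{M}$ of $\mathcal X$ together with local trivializations of $E$ over each $U_\alpha$, chosen so that each $\varphi_\alpha(U_\alpha)$ is a bounded domain in $\mathbb R^n$ with, say, $C^1$ boundary (a ball suffices), and a subordinate smooth partition of unity $\{\chi_\alpha\}$ with $\operatorname{supp}\chi_\alpha\subset U_\alpha$. Since $\mathcal X$ is compact, the geodesic distance $d(x,y)$ is, up to two-sided constants depending only on the atlas, comparable to the Euclidean distance $|\varphi_\alpha(x)-\varphi_\alpha(y)|$ whenever $x,y$ lie in a single chart; and the $\nabla$-parallel transport $T_{x,y}$ differs from the identity (in the chosen trivialization) by a term that is $O(d(x,y))$ uniformly, because the connection coefficients are smooth hence bounded on the compact $\mathcal X$. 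Consequently, by \cite[Theorem 10.2.36]{Nic07} the norm $\|u\|_{C^{s,\lambda}_E(\mathcal X)}$ is equivalent to $\sum_\alpha \|(\chi_\alpha u)\circ\varphi_\alpha^{-1}\|_{C^{s,\lambda}(\varphi_\alpha(U_\alpha))}$, the latter being the usual vector-valued Hölder norm on a Euclidean domain.

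Granting this equivalence, the continuous embedding for $s+\lambda\ge s'+\lambda'$ follows immediately from the corresponding elementary fact on each bounded domain $\varphi_\alpha(U_\alpha)$: a function in $C^{s,\lambda}$ of a bounded Lipschitz (indeed convex) domain lies in $C^{s',\lambda'}$ with norm control, which one checks by differentiating $\le s'$ times, using the mean value theorem to trade one derivative for a Hölder-$1$ difference quotient, and using boundedness of the domain to dominate a $\lambda$-Hölder seminorm by a $\lambda'$-Hölder one when $\lambda'\le\lambda$. Summing over the finitely many $\alpha$ and transporting back through the norm equivalence yields $\|u\|_{C^{s',\lambda'}_E(\mathcal X)}\le C\,\|u\|_{C^{s,\lambda}_E(\mathcal X)}$.

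For the compactness assertion when $s+\lambda>s'+\lambda'$, I would argue by a diagonal/Arzelà--Ascoli extraction. Let $\{u_k\}$ be bounded in $C^{s,\lambda}_E(\mathcal X)$. In each chart the sequence $\{(\chi_\alpha u_k)\circ\varphi_\alpha^{-1}\}_k$ is bounded in $C^{s,\lambda}$ of a bounded domain, hence its derivatives up to order $s$ are uniformly bounded and equicontinuous (the top-order derivatives being uniformly $\lambda$-Hölder); by Arzelà--Ascoli one extracts a subsequence converging in $C^{s,0}$, and then interpolation between $C^{s,0}$ convergence and the uniform $C^{s,\lambda}$ bound upgrades this to convergence in $C^{s',\lambda'}$ whenever $s'+\lambda'<s+\lambda$. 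Passing to a common subsequence over the finite index set $\alpha$ and reassembling via the partition of unity produces a subsequence of $\{u_k\}$ convergent in $C^{s',\lambda'}_E(\mathcal X)$.

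The main obstacle is not analytic but geometric bookkeeping: one must verify carefully that the intrinsic Hölder seminorm built from parallel transport $T_{x,y}$ along minimal geodesics is genuinely equivalent, with constants uniform over $\mathcal X$, to the patched-up coordinate seminorms — in particular that the restriction $d(x,y)\le d_0$ in the definition of $\langle\cdot\rangle_{\lambda,\mathcal X,E}$ does not lose information (points far apart are handled trivially using the $C^{s,0}$ part and a covering argument) and that the smooth difference between $T_{x,y}$ and coordinate identification contributes only lower-order terms. Once this equivalence is in hand — which is exactly the content of \cite[Theorem 10.2.36]{Nic07} — the two embedding claims are the classical ones on Euclidean domains, so I would keep that part brief.
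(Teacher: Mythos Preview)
Your proposal is correct, but there is nothing to compare it against: the paper does not prove Theorem~\ref{t.emb.hoelder} at all. The theorem is introduced with the sentence ``It is also known that these Banach spaces admit the standard embedding theorems'' and is stated as a known fact, with the surrounding discussion pointing to \cite[Ch.~10]{Nic07} for the construction and properties of $C^{s,\lambda}_E(\mathcal X)$. So the paper treats this as background, not as something requiring an in-text argument.

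That said, your sketch is exactly the standard route one would take to supply the missing proof: localize via a finite atlas and trivializations, invoke the norm equivalence of \cite[Theorem~10.2.36]{Nic07} to pass to ordinary vector-valued H\"older norms on bounded Euclidean domains, apply the elementary Euclidean embedding $C^{s,\lambda}\hookrightarrow C^{s',\lambda'}$ for $s+\lambda\ge s'+\lambda'$, and for compactness use Arzel\`a--Ascoli followed by interpolation between $C^{s,0}$ convergence and the uniform $C^{s,\lambda}$ bound. The only remark I would add is that your ``main obstacle'' paragraph is slightly overcautious: once you have decided to quote \cite[Theorem~10.2.36]{Nic07} for the equivalence of intrinsic and patched norms, that result already absorbs the bookkeeping about parallel transport and the cutoff $d(x,y)\le d_0$, so you need not rederive it.
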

Let us introduce  the anisotropic H\"older spaces over ${\mathcal X}_T$ 
adopted to the parabolic theory  see, for instance, 
\cite{LadSoUr67}, \cite{Lady70}. Namely, for $s \in {\mathbb Z}_+$ and 
$\lambda \in [0,1)$, $\gamma \in [0,1)$ let 
$ C^{2s,\lambda,s,\gamma}_{E} (\mathcal{X}_T)$
be the space of sections of the induced bundle $E (t)$ over $\mathcal{X}_T$ with continuous 
partial derivatives $\nabla^m_x \partial^j_t u$, for $m + 2j \leq 2s$ and 
the finite norm   
\begin{equation*}
   \| u \|_{C^{2s,\lambda,s,\gamma} _{E}(\mathcal{X}_T)}
 =\sum_{m + 2j \leq 2s}
 \sup_{t \in [0,T]}  \|   \partial^j_t \nabla^m_x u (\cdot,t)
   \|_{C^{0,\lambda}_{E} (\mathcal{X})} + \gamma 
\sum_{m + 2j \leq 2s} \langle \nabla^m_x \partial^j_t u\rangle
_{\gamma,{\mathcal X}_T,E}  .
\end{equation*}
where, for $\gamma>0$, 
\begin{equation*}
\langle u\rangle_{\gamma,{\mathcal X}_T,E} 
=\sup_{t', t'' \in [0,T] \atop t' \neq t''}
   \frac{\|u (\cdot,t') - u (\cdot,t'') \|_{C^{0,0}_{E} (\mathcal{X})}}{|t'-t''|^{\gamma}}.
\end{equation*}
We also need a function space whose structure goes slightly beyond the scale of 
function spaces
$C^{2s,\lambda,s,\gamma}_{E} (\mathcal{X}_T)$.
Namely, given any integral $k \geq 0$, we denote by
 $C^{2s+k,\lambda,s,\gamma}_{E} (\mathcal{X}_T)$
Namely, given any $k\in {\mathbb Z}_+$, we denote by
 $C^{2s+k,\lambda,s,\gamma}_{E} (\mathcal{X}_T)$
the space of all continuous functions $u$ on $\mathcal{X}_T$ 
with  $\nabla^l_x u$ belonging to $C^{2s,\lambda,s,\gamma}_{E} (\mathcal{X}_T)$ for all 
$l\in {\mathbb Z}_+$ satisfying $0\leq l \leq k$. This is a Banach  
space  with the norm
\begin{equation*}
   \| u \|_{C^{2s+k,\lambda,s,\gamma}_{E} (\mathcal{X}_T)}
 = \sum_{l=0}^k
   \| \nabla^l_x u \|_{C^{2s,\lambda,s,\gamma }_{E} (\mathcal{X}_T)}.
\end{equation*}
As it is customary in the parabolic theory, 
we use these spaces for $\gamma=0$ and $\gamma=\frac{\lambda}{2}$, only. The following embedding theorem is rather expectable.
\begin{theorem}
\label{t.emb.hoelder.t}
Let  
   $k,s, s' \in \mathbb{Z}_{+}$,
      $\lambda, \lambda' \in [0,1)$.
If    $s+\lambda \geq s'+\lambda'$ 
then the space
   $C^{2s+k,\lambda,s,\frac{\lambda}{2}}_{E} (\mathcal{X}_T)$ is embedded continuously into
   $C^{2s'+k,\lambda',s,\frac{\lambda'}{2}}_{E} (\mathcal{X}_T)$.
The embedding is compact if $s + \lambda > s' + \lambda'$.
\end{theorem}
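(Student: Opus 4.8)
The plan is to reduce the statement about the anisotropic spaces $C^{2s+k,\lambda,s,\gamma}_E(\mathcal{X}_T)$ to the already-available elliptic embedding Theorem \ref{t.emb.hoelder} on the compact manifold $\mathcal{X}$ itself, treating the time variable $t\in[0,T]$ as a parameter. First I would observe that, by definition of the norm on $C^{2s+k,\lambda,s,\frac{\lambda}{2}}_E(\mathcal{X}_T)$, it suffices to handle the case $k=0$, since the $k$-dependence in both source and target is just a finite sum over $\nabla^l_x u$, $0\le l\le k$, and the space index $s$ (not $s'$) is common to both. So the core claim becomes: if $s+\lambda\ge s'+\lambda'$, then $C^{2s,\lambda,s,\frac{\lambda}{2}}_E(\mathcal{X}_T)\hookrightarrow C^{2s',\lambda',s,\frac{\lambda'}{2}}_E(\mathcal{X}_T)$ continuously, and compactly when the inequality is strict. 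Note the second index is $s$ on both sides: the number of $t$-derivatives allowed is unchanged, only the spatial smoothness $2s$ vs. $2s'$ and the Hölder exponents change.

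The key structural fact is that for each fixed $t$, a section $u(\cdot,t)$ lies in $C^{2s,\lambda}_E(\mathcal{X})$, and by Theorem \ref{t.emb.hoelder} (applied with the spatial orders $2s\ge 2s'$ — here I use $s+\lambda\ge s'+\lambda'$, hence $2s+\lambda\ge 2s'+\lambda'$, so in particular the spatial regularity drops) we get a uniform bound
\begin{equation*}
\sup_{t\in[0,T]}\|u(\cdot,t)\|_{C^{2s',\lambda'}_E(\mathcal{X})}\le C\,\sup_{t\in[0,T]}\|u(\cdot,t)\|_{C^{2s,\lambda}_E(\mathcal{X})}\le C\,\|u\|_{C^{2s,\lambda,s,\frac{\lambda}{2}}_E(\mathcal{X}_T)}.
\end{equation*}
The same argument applied to $\partial^j_t u$ for $2j\le 2s$ (the number of time derivatives is bounded by the common index $s$) controls all the terms $\sup_t\|\partial^j_t\nabla^m_x u(\cdot,t)\|_{C^{0,\lambda'}}$ with $m+2j\le 2s'$, since these are among the derivatives estimated by $\|\partial^j_t u(\cdot,t)\|_{C^{2s',\lambda'}}$. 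For the parabolic Hölder seminorm in $t$ with exponent $\frac{\lambda'}{2}$, I would use an interpolation/convexity inequality: the difference quotient $\|w(\cdot,t')-w(\cdot,t'')\|_{C^{0,0}}/|t'-t''|^{\lambda'/2}$ is controlled by combining the $C^{0,0}$ bound (giving power $0$) and the $\lambda/2$-seminorm bound (giving power $\lambda/2$), since $\lambda'\le\lambda$; concretely, $\langle w\rangle_{\frac{\lambda'}{2},\mathcal{X}_T,E}\le (2T^{\lambda/2})^{1-\lambda'/\lambda}\,\langle w\rangle_{\frac{\lambda}{2},\mathcal{X}_T,E}^{\lambda'/\lambda}\,(\sup_t\|w(\cdot,t)\|_{C^{0,0}})^{1-\lambda'/\lambda}$ up to a constant, applied to $w=\nabla^m_x\partial^j_tu$ with $m+2j\le 2s'\le 2s$. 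This yields continuity of the embedding.

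For compactness when $s+\lambda>s'+\lambda'$, I would run the standard Arzelà–Ascoli / diagonal argument: take a bounded sequence $\{u_\nu\}$ in $C^{2s,\lambda,s,\frac{\lambda}{2}}_E(\mathcal{X}_T)$. By the continuous embedding just proved (applied with an intermediate pair $(s'',\lambda'')$ satisfying $s+\lambda>s''+\lambda''>s'+\lambda'$, or directly) the sequence together with all its derivatives $\nabla^m_x\partial^j_tu_\nu$, $m+2j\le 2s'$, is uniformly bounded and equicontinuous on the compact metric space $\mathcal{X}_T$ — the spatial Hölder bound of order $\lambda''>\lambda'$ gives equicontinuity in $x$, and the $\frac{\lambda''}{2}$-Hölder bound in $t$ gives equicontinuity in $t$. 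Passing to a subsequence converging in $C^{0,0}$ for each such derivative, the limit has the matching derivatives and the convergence upgrades to $C^{2s',\lambda',s,\frac{\lambda'}{2}}_E(\mathcal{X}_T)$ because a bounded sequence in a higher Hölder norm that converges uniformly also converges in every strictly lower Hölder norm. The main obstacle — really the only nonroutine point — is organizing the parabolic $t$-Hölder seminorm estimate cleanly, i.e. the interpolation inequality trading $\frac{\lambda}{2}$-regularity against $C^{0,0}$; everything else follows the classical parabolic-space pattern and Theorem \ref{t.emb.hoelder}. One should also double-check the convention $\gamma=0$ versus $\gamma=\frac{\lambda}{2}$: when $\lambda'=0$ the target seminorm term is switched off by the factor $\gamma$ in the norm definition, so that case is even easier and is covered by the $\sup_t$ estimates alone.
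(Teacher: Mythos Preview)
The paper does not prove this theorem --- it is merely announced as ``rather expectable'' --- so there is no proof to compare against. Your strategy (apply the stationary embedding Theorem~\ref{t.emb.hoelder} in $x$ at each fixed $t$, handle the $t$-H\"older seminorm by interpolation, and invoke Arzel\`a--Ascoli for compactness) is the standard route for anisotropic parabolic H\"older spaces and is essentially correct.

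There is one genuine gap. Your interpolation bound $\langle w\rangle_{\lambda'/2}\le C\,\langle w\rangle_{\lambda/2}^{\,\lambda'/\lambda}\,\|w\|_{C^{0,0}}^{\,1-\lambda'/\lambda}$ presupposes $\lambda'\le\lambda$, and you assert this explicitly (``since $\lambda'\le\lambda$''). But the hypothesis $s+\lambda\ge s'+\lambda'$ does \emph{not} force $\lambda'\le\lambda$ when $s>s'$: take $s=2$, $\lambda=1/10$, $s'=1$, $\lambda'=9/10$. In that regime your interpolation argument collapses. The repair is to split cases. If $s=s'$ then indeed $\lambda\ge\lambda'$ and your argument goes through. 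If $s>s'$ then for every $j\le s'$ one also has $j+1\le s$, so $\nabla_x^m\partial_t^{j+1}u$ is continuous and bounded whenever $m+2(j+1)\le 2s$, in particular whenever $m+2j\le 2s'$; hence $\nabla_x^m\partial_t^{j}u$ is Lipschitz in $t$ and therefore $\tfrac{\lambda'}{2}$-H\"older for any $\lambda'\in[0,1)$. As a side remark, the third index in the printed target space is $s$; this appears to be a misprint for $s'$ (compare the first application in the proof of Corollary~\ref{c.mathfrak.compact}, where the time index drops from $s$ to $s-1$), and your computation with $m+2j\le 2s'$ is in fact proving that corrected statement.
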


We also need the following expectable lemmata.

\begin{lemma}
\label{l.diff.oper}
Suppose that $s, k \in {\mathbb Z}_+$  and $\lambda \in [0,1)$.
Then it follows that
\begin{enumerate}
\item
any differential operator of order $k'\leq k$ on $\mathcal{X}$  acting 
between vector bundles $E$ and $F$ maps
    $C^{2s+k,\lambda,s,\frac{\lambda}{2}}_{E} (\mathcal{X}_T)$
continuously into
    $C^{2s+k-k',\lambda,s,\frac{\lambda}{2}}_{F} (\mathcal{X}_T)$
\item
if $0 \leq j \leq s$ then the operator $\partial_t^j$ maps
   $C^{2s+k,\lambda,s,\frac{\lambda}{2}}_{E} (\mathcal{X}_T)$ 
continuously into 
   $C^{2(s-j)+k,\lambda,s-j,\frac{\lambda}{2}}_{E} (\mathcal{X}_T)$;
\item
operator $ \Delta^i$ maps $C^{2(s+1)+k,\lambda,s+1,\frac{\lambda}{2}}_{E^i} (\mathcal{X}_T)$
continuously into $C^{2s+k,\lambda,s,\frac{\lambda}{2}}_{E^i} (\mathcal{X}_T)$. 
\end{enumerate}
\end{lemma}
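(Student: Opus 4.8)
The plan is to reduce all three parts to the definition of the spaces plus two elementary facts that I would record first. \emph{Fact A (multipliers):} any smooth bundle homomorphism $b$ on $\mathcal X$, viewed as a $t$-independent section over $\mathcal X_T$, induces a bounded operator $u\mapsto bu$ on each $C^{2s+k,\lambda,s,\gamma}_{E}(\mathcal X_T)$ with $\gamma\in\{0,\lambda/2\}$. \emph{Fact B:} the operator $\partial_t$ commutes with the covariant derivative $\nabla_x$ and with multiplication by $t$-independent homomorphisms. Fact B is immediate, the connection living on $\mathcal X$ while $t$ is a mere parameter. For Fact A I would first reduce to $k=0$ via $\|\cdot\|_{C^{2s+k,\ldots}}=\sum_{l\le k}\|\nabla^l_x(\cdot)\|_{C^{2s,\ldots}}$ and the Leibniz rule for $\nabla_x$ (which involves no curvature); for $k=0$, since $\partial_t b=0$, one writes $\partial^a_t\nabla^m_x(bu)=\nabla^m_x(b\,\partial^a_t u)$ as a finite sum of terms $(\nabla^{m'}_x b)\cdot(\nabla^{m-m'}_x\partial^a_t u)$, $0\le m'\le m$, and estimates each uniformly in $t$ by the compactness of $\mathcal X$ (which bounds all $C^{0,\lambda}(\mathcal X)$-norms of $\nabla^{m'}_x b$) together with the elementary inequalities $\sup_t\|bw(\cdot,t)\|_{C^{0,\lambda}(\mathcal X)}\le c\,\|b\|_{C^{0,\lambda}(\mathcal X)}\sup_t\|w(\cdot,t)\|_{C^{0,\lambda}(\mathcal X)}$ and $\langle bw\rangle_{\gamma,\mathcal X_T}\le\|b\|_{C^{0,0}(\mathcal X)}\langle w\rangle_{\gamma,\mathcal X_T}$; since $(m-m')+2a\le 2s$, all these quantities already occur in $\|u\|_{C^{2s,\lambda,s,\gamma}}$.

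For part (1) I would fix the connection and write a differential operator $D$ of order $k'\le k$ between $E$ and $F$ as a finite sum $Du=\sum_{l=0}^{k'}b_l\,\nabla^l_x u$ with smooth $t$-independent bundle homomorphisms $b_l$; such a representation exists on the compact $\mathcal X$, since rewriting $D$ through iterated covariant derivatives produces only lower-order corrections with smooth coefficients. Straight from the definition of the spaces --- and because $\nabla^{l''}_x\nabla^l_x u=\nabla^{l+l''}_x u$, so that no curvature enters --- the inclusion $u\in C^{2s+k,\lambda,s,\gamma}_{E}(\mathcal X_T)$ yields $\nabla^l_x u\in C^{2s+(k-l),\lambda,s,\gamma}(\mathcal X_T)$ for $0\le l\le k$ with norm at most $\|u\|_{C^{2s+k,\lambda,s,\gamma}_{E}}$. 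Applying Fact A to each $b_l\nabla^l_x u$ and using the continuous embedding $C^{2s+a,\lambda,s,\gamma}\hookrightarrow C^{2s+b,\lambda,s,\gamma}$ for $a\ge b$ (together with $k-l\ge k-k'$), we get $Du\in C^{2s+(k-k'),\lambda,s,\gamma}_{F}(\mathcal X_T)$ with the stated bound. Part (2) is then nearly a triviality: by Fact B one has $\nabla^l_x\partial^j_t u=\partial^j_t\nabla^l_x u$ with $\nabla^l_x u\in C^{2s,\lambda,s,\gamma}(\mathcal X_T)$ for $l\le k$, which reduces the claim to $k=0$, and for $k=0$ every summand of $\|\partial^j_t u\|_{C^{2(s-j),\lambda,s-j,\gamma}}$ has the form $\sup_t\|\partial^{a+j}_t\nabla^m_x u(\cdot,t)\|_{C^{0,\lambda}(\mathcal X)}$ or $\langle\nabla^m_x\partial^{a+j}_t u\rangle_{\gamma,\mathcal X_T}$ over $m+2a\le 2(s-j)$, hence with $m+2(a+j)\le 2s$ and $a+j\le s$, and so is already a summand of $\|u\|_{C^{2s,\lambda,s,\gamma}}$.

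I expect part (3) to be the one needing real care, since it does \emph{not} follow from part (1): $\Delta^i$ has order $2$, possibly larger than $k$, and the temporal index is forced down from $s+1$ to $s$ --- a drop dictated by the parabolic scaling built into these spaces (two spatial derivatives balancing one time derivative) and invisible at the level of (1). Arguing as for (2), I would write $\|\Delta^i u\|_{C^{2s+k,\lambda,s,\gamma}_{E^i}}=\sum_{l=0}^{k}\|\nabla^l_x\Delta^i u\|_{C^{2s,\lambda,s,\gamma}_{E^i}}$, note that $\nabla^l_x\Delta^i$ is a differential operator of order $l+2$ on $\mathcal X$ with smooth $t$-independent coefficients, and invoke Facts A and B to reduce, for each $(m,a)$ with $m+2a\le 2s$, to bounding $\sup_t\|\partial^a_t\nabla^r_x u(\cdot,t)\|_{C^{0,\lambda}(\mathcal X)}$ and $\langle\nabla^r_x\partial^a_t u\rangle_{\gamma,\mathcal X_T}$ with $r\le m+l+2$ by $\|u\|_{C^{2(s+1)+k,\lambda,s+1,\gamma}_{E^i}}$. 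This is the point where the bookkeeping matters: such a derivative has parabolic weight $r+2a\le(m+2a)+l+2\le 2s+k+2=2(s+1)+k$ and temporal order $a\le s\le s+1$, whereas one checks directly from the definition of $C^{2(s+1)+k,\lambda,s+1,\gamma}_{E^i}(\mathcal X_T)$ (namely $\nabla^{l''}_x u\in C^{2(s+1),\lambda,s+1,\gamma}$ for $l''\le k$) that precisely the derivatives $\partial^a_t\nabla^r_x u$ with $r+2a\le 2(s+1)+k$ and $a\le s+1$ are controlled by that norm. Summing the finitely many contributions proves (3). Beyond this weight count the only thing to check is the routine fact that commuting covariant derivatives and coefficients past one another produces only admissible lower-order terms; granting it, every estimate is linear and term by term.
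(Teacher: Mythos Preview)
The paper does not actually prove this lemma: it is introduced as one of the ``expectable lemmata'' and is stated without proof, being a routine consequence of the definitions of the anisotropic H\"older spaces. Your argument supplies exactly the elementary bookkeeping the paper omits, and it is correct; in particular your weight count in part~(3), verifying that every term $\partial_t^a\nabla_x^r u$ arising from $\nabla_x^l\Delta^i u$ satisfies $r+2a\le 2(s{+}1)+k$ and $a\le s{+}1$ and hence is dominated by the $C^{2(s+1)+k,\lambda,s+1,\lambda/2}$-norm, is the substantive point and is handled cleanly.
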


In the sequel we will always assume that there are constants $c_{i,j} ({\mathcal M})$
such that 
\begin{equation} \label{eq.property.M1}
|{\mathcal M}_{i,1,x}  (v, u)|\leq c_{i,1} (\mathcal M) |u|\, |v|, \,\, 
|{\mathcal M}_{i,2,x}  (w, u)|\leq 
c_{i,2} (\mathcal M) |u|\, |w|
\end{equation}
for all $x\in \mathcal X$ and all $v\in E^{i+1}_x$ and $ u,w \in E^{i}_x$. 
 
\begin{lemma}
\label{l.product}
Let $s,k \in {\mathbb Z}_+$ and $\lambda \in [0,1)$. If \eqref{eq.property.M1} 
holds then forms  \eqref{eq.nonlinear.M} induce continuous bilinear operators
\begin{equation} \label{eq.M1}
 {\mathcal M}_{i,j} : C^{2s+k,\lambda,s,\frac{\lambda}{2}} _{E^\cdot} ({\mathcal X}_T)  
\times C^{2s+k,\lambda,s,\frac{\lambda}{2}} _{E^i} ({\mathcal X}_T) \to 
C^{2s+k,\lambda,s,\frac{\lambda}{2}} _{E^\cdot} ({\mathcal X}_T) ,
\end{equation}
satisfying 
  $ \| {\mathcal M}_{i,j}(v, u) \|_{C^{2s+k,\lambda,s,\frac{\lambda}{2}} 
_{E^\cdot} (\mathcal{X}_T)} \leq
  c\,
   \| u \|_{C^{2s+k,\lambda,s,\frac{\lambda}{2}} _{E^i}(\mathcal{X}_T)}
   \| v \|_{C^{2s+k,\lambda,s,\frac{\lambda}{2}} _{E^\cdot} (\mathcal{X}_T)}
$
with a constant  $c > 0$ independent of $u$ and $v$.
\end{lemma}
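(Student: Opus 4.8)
The plan is to reduce the bilinear estimate for $\mathcal{M}_{i,j}$ to the scalar case by working in local bundle charts and exploiting that the H\"older-type norms are, up to equivalence, built from finitely many scalar coordinate components. Concretely, I would first cover $\mathcal X$ by finitely many trivializing coordinate patches over which $E^{i+1}$, $E^i$ and $E^{i-1}$ admit smooth orthonormal frames; on each patch the forms $\mathcal{M}_{i,1,x}$ and $\mathcal{M}_{i,2,x}$ are given by smooth (in $x$) coefficient functions $m^{\alpha}_{\beta\gamma}(x)$, which are bounded together with all their $x$-derivatives on the (relatively compact) patch. The pointwise bounds \eqref{eq.property.M1} then say precisely that the full contracted symbol has bounded sup-norm; together with smoothness of the frames and of $m^{\alpha}_{\beta\gamma}$, the problem is reduced to proving that multiplication is a continuous bilinear map $C^{2s+k,\lambda,s,\frac{\lambda}{2}}(\mathcal{X}_T)\times C^{2s+k,\lambda,s,\frac{\lambda}{2}}(\mathcal{X}_T)\to C^{2s+k,\lambda,s,\frac{\lambda}{2}}(\mathcal{X}_T)$ for scalar functions, with the extra smooth factor $m^{\alpha}_{\beta\gamma}$ absorbed since $C^{2s+k,\lambda,s,\frac{\lambda}{2}}(\mathcal{X}_T)$ is a module over $C^{\infty}$ by the same argument.

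The core is therefore the scalar Banach-algebra property of the anisotropic H\"older space. I would establish it by the usual Leibniz-rule bookkeeping: for a mixed derivative $\partial_t^{j}\nabla_x^{m}(uv)$ with $m+2j\le 2s$ (and up to $k$ further $x$-derivatives), expand by the product rule into a finite sum of terms $(\partial_t^{j_1}\nabla_x^{m_1}u)(\partial_t^{j_2}\nabla_x^{m_2}v)$ with $m_1+m_2\le m+k$, $j_1+j_2\le j$, $m_1+2j_1\le 2s+k$, $m_2+2j_2\le 2s+k$; each factor is then controlled in the spatial sup-norm by the $C^{2s+k,0,s,0}$-part of the norm of $u$ or $v$. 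For the two seminorms, the spatial H\"older seminorm $\langle\cdot\rangle_{\lambda,\mathcal X}$ of a product obeys $\langle fg\rangle_\lambda\le \|f\|_{C^0}\langle g\rangle_\lambda+\langle f\rangle_\lambda\|g\|_{C^0}$, and the time-H\"older seminorm $\langle\cdot\rangle_{\frac{\lambda}{2},\mathcal X_T}$ obeys the analogous inequality with the $C^{0,0}(\mathcal X)$-sup in place of pointwise values; applying these to each Leibniz term and using that the lower-order derivatives entering as $C^0$-factors are dominated by the full $C^{2s+k,\lambda,s,\frac{\lambda}{2}}$-norm yields the claimed bound $\|\mathcal{M}_{i,j}(v,u)\|\le c\|u\|\,\|v\|$.

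Two bookkeeping points need a little care. First, when some $m_1+2j_1$ hits the ceiling $2s+k$, the corresponding factor must be estimated in its H\"older seminorm (not just $C^0$) while its partner is taken in $C^0$; this is exactly why the H\"older seminorms appear symmetrically in the estimate and why one cannot get away with the $C^{0}$-norm alone — but it is still covered by the full norm on each side. Second, passing between patches one must check that the frame transition matrices and the partition-of-unity functions, being smooth on the compact $\mathcal X$ and $t$-independent, do not spoil the estimate; this is the $C^{\infty}$-module remark above, which follows from the scalar case applied with one factor replaced by a fixed smooth function whose $C^{2s+k,\lambda,s,\frac{\lambda}{2}}$-norm is a finite constant. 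Bilinearity is clear from the fibrewise bilinearity of $\mathcal{M}_{i,j,x}$, and continuity follows from the norm estimate; for the second map in \eqref{eq.nonlinear.M} no extra work is needed since $A^{i-1}$ is handled separately in the definition \eqref{eq.nonlinear} of $\mathcal{N}^i$, so here we only need $\mathcal{M}_{i,2}$ itself as a bilinear map, which the same argument gives.

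The main obstacle is purely organizational rather than conceptual: keeping the multi-index accounting straight so that \emph{every} term produced by Leibniz on both the $x$- and $t$-derivatives still satisfies the order constraint $m_\ell+2j_\ell\le 2s+k$ defining the target space, and verifying that the "overflow" factors land in a H\"older seminorm that is genuinely controlled. Once the scalar algebra estimate is in place with the correct constants, globalizing via the finite atlas and the smooth frames is routine.
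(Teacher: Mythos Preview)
Your approach is correct and considerably more explicit than the paper's own argument. The paper's proof is essentially a two-line sketch: it records the bilinear difference identity
\[
\mathcal{M}_{i,j}(v,u)-\mathcal{M}_{i,j}(v^{(0)},u^{(0)})
=\mathcal{M}_{i,j}(v-v^{(0)},u^{(0)})+\mathcal{M}_{i,j}(v^{(0)},u-u^{(0)})+\mathcal{M}_{i,j}(v-v^{(0)},u-u^{(0)}),
\]
and then asserts continuity by invoking the pointwise bound \eqref{eq.property.M1} together with Lemma~\ref{l.diff.oper}. In other words, the paper treats the anisotropic H\"older product estimate as essentially known and only makes explicit why boundedness of a bilinear map yields continuity. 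What you do differently is to actually \emph{prove} the underlying Banach-algebra property: localize in bundle charts, reduce to scalar multiplication, and carry out the Leibniz/H\"older bookkeeping for the mixed $(x,t)$-derivatives and both seminorms. This buys you a self-contained argument that does not rely on the reader already knowing that $C^{2s+k,\lambda,s,\lambda/2}$ is closed under products; the paper's route is shorter but leaves exactly that point implicit (and its appeal to Lemma~\ref{l.diff.oper}, which concerns linear differential operators rather than pointwise products, is at best a pointer to the same local-coordinate machinery you spell out). Your caveats about index overflow and the $C^\infty$-module step for transition functions are the right places to be careful, and your treatment of them is adequate.
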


\begin{proof} Indeed, since ${\mathcal M}_{i,j,x}$ are bilinear forms, 
\begin{equation} \label{eq.M.diff}
{\mathcal M}_{i,j}  (v,u ) - {\mathcal M}_{i,j} (v^{(0)}, u^{(0)}) = 
\end{equation} 
\begin{equation*}
{\mathcal M}_{i,j} (v-v^{(0)},u^{(0)}) + 
{\mathcal M}_{i,j} (v^{(0)},u-u^{(0)}) + {\mathcal M}_{i,j} (v-v^{(0)},u-u^{(0)}),
\end{equation*} 
at each points $v,v^{(0)},u,u^{(0)}$ of the Banach spaces under the consideration. Then 
Lemma \ref{l.diff.oper} implies  that \eqref{eq.M1}  
are continuous operators because of \eqref{eq.property.M1}, \eqref{eq.M.diff}.
\end{proof}

\section{Elliptic complexes over the H\"older spaces}
\label{s.NS.deRham}

The behaviour of the elliptic complexes on the H\"older scale
 is well known (see \cite[Ch. 2, \S 2.2]{Tark95a}, \cite[\S 10.4.3]{Nic07}. 
Namely, consider the bounded linear operator
\begin{equation}
\label{eq.Hoelder.Laplace}
   \Delta^i :\, C^{s+2,\lambda}_{E^i} (\mathcal{X}) \to
              C^{s,\lambda}_{E^i} (\mathcal{X})
\end{equation}
induced by the Laplacian $\Delta^i$. 
Let $\mathcal{H}^i$ stand for the so-called `harmonic space' of complex 
\eqref{eq.ellcomp}, i.e. 
\begin{equation*}
\mathcal{H}^i = \{ u \in C^\infty _{E^i}(\mathcal{X}): A^i u =0 \mbox{ and }
(A^{i-1})^* u =0 \mbox{ in } {\mathcal X}\}.
\end{equation*}
Denote by $\Pi^i $ the orthogonal projection from $L^2 _{E^i}(\mathcal{X})$ 
onto $\mathcal{H}^i$. 
 
\begin{theorem}
\label{t.Hoelder.Laplace}
Let $0\leq i \leq N$, $s\in {\mathbb Z}_+$, $0<\lambda<1$. 
Then operator \eqref{eq.Hoelder.Laplace} is Fredholm: 
\begin{enumerate}
\item
the kernel of operator \eqref{eq.Hoelder.Laplace} equals to 
the finite-dimensional space $\mathcal{H}^i$;
\item
given $v\in C^{s,\lambda}_{E^i} (\mathcal{X})$ there is a form 
$u\in C^{s+2,\lambda}_{E^i} (\mathcal{X})$ such that $\Delta^i u =v$ 
if and only if $(v,h)_{i} =0$
for all $h \in \mathcal{H}^i$; 
\item
there exists a pseudo-differential operator $\varphi^i$ on $\mathcal{X}$ 
such that the operator 
\begin{equation*}
   \varphi^i :\, C^{s,\lambda}_{E^i} (\mathcal{X}) \to
              C^{s+2,\lambda}_{E^i} (\mathcal{X}),
\end{equation*}
induced by $\varphi^i$, is linear bounded and with the identity $I$ we have 
\begin{equation}
\label{eq.Hodge.0}
  A^i \Pi^i =0, \, (A^{i-1})^*\Pi^i  =0 ,   
\, \Pi^{i+1} A^i =0, \, \Pi^{i-1} (A^{i-1})^* 
\end{equation}
\begin{equation*}
\Pi^i \circ \Pi^i = \Pi^i, \,
\varphi^i \Pi^i =0, \,  \Pi^i \varphi^i v =0,    
\end{equation*}
\begin{equation*}
\varphi^i \Delta ^i   =  I- \Pi^i  \mbox{ on } C^{s+2,\lambda}_{E^i} (\mathcal{X}),  
\,\, 
 \Delta ^i \varphi^i   = I - \Pi^i \mbox{ on } C^{s,\lambda}_{E^i} (\mathcal{X}). 
\end{equation*}
\end{enumerate}
\end{theorem}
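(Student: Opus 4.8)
The plan is to reduce Theorem \ref{t.Hoelder.Laplace} to the classical Hodge theory on the compact closed manifold $\mathcal X$ together with the elliptic regularity of $\Delta^i$ on the H\"older scale. First I would recall that each $\Delta^i$ is a second order strongly elliptic, formally self-adjoint operator on the compact closed manifold $\mathcal X$; hence by the standard Schauder a priori estimates (see, e.g., \cite[Ch.~10]{Nic07}) one has, for every $u\in C^{s+2,\lambda}_{E^i}(\mathcal X)$,
\begin{equation*}
\|u\|_{C^{s+2,\lambda}_{E^i}(\mathcal X)}\leq c\,\bigl(\|\Delta^i u\|_{C^{s,\lambda}_{E^i}(\mathcal X)}+\|u\|_{L^2_{E^i}(\mathcal X)}\bigr),
\end{equation*}
which immediately shows that \eqref{eq.Hoelder.Laplace} has finite-dimensional kernel and closed range. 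The kernel consists of $C^\infty$ sections by elliptic regularity, and the identity $(\Delta^i u,u)_i=\|A^iu\|_{i+1}^2+\|(A^{i-1})^*u\|_{i-1}^2$ (valid for smooth $u$ by the definition of the formal adjoints and integration by parts on the closed manifold $\mathcal X$) forces $A^iu=0$ and $(A^{i-1})^*u=0$, so $\ker\Delta^i=\mathcal H^i$. This proves (1).

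For (2), self-adjointness of $\Delta^i$ on $L^2_{E^i}(\mathcal X)$ gives that the range of the unbounded operator is the orthogonal complement of $\mathcal H^i$; the solvability condition $(v,h)_i=0$ for all $h\in\mathcal H^i$ is therefore necessary. For sufficiency, given such $v\in C^{s,\lambda}_{E^i}(\mathcal X)$, one first solves $\Delta^iu=v$ in $L^2$ (using the spectral decomposition or the Fredholm alternative for the Hilbert-space realization), obtaining $u\in L^2_{E^i}(\mathcal X)$ orthogonal to $\mathcal H^i$; then the Schauder estimate and a bootstrap argument upgrade $u$ to $C^{s+2,\lambda}_{E^i}(\mathcal X)$. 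This yields (2).

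For (3), I would define $\varphi^i$ as the Green operator of $\Delta^i$: on $C^{s,\lambda}_{E^i}(\mathcal X)$ set $\varphi^iv$ to be the unique solution $u\perp\mathcal H^i$ of $\Delta^iu=v-\Pi^iv$ furnished by (2). That $\varphi^i$ is a pseudodifferential operator of order $-2$ (hence bounded $C^{s,\lambda}\to C^{s+2,\lambda}$) follows from the standard parametrix construction for the elliptic operator $\Delta^i$, after which the four displayed batches of identities are formal consequences: $\varphi^i\Delta^i=I-\Pi^i$ and $\Delta^i\varphi^i=I-\Pi^i$ hold by construction, $\Pi^i\circ\Pi^i=\Pi^i$ and $\varphi^i\Pi^i=\Pi^i\varphi^i=0$ follow since $\mathcal H^i=\ker\Delta^i=\mathrm{range}\,\Pi^i$ and $\varphi^i$ takes values in $\mathcal H^{i\perp}$, and the commutation relations $A^i\Pi^i=0$, $(A^{i-1})^*\Pi^i=0$, $\Pi^{i+1}A^i=0$, $\Pi^{i-1}(A^{i-1})^*=0$ follow from $\mathcal H^i$ being exactly the kernel of both $A^i$ and $(A^{i-1})^*$ on $\mathcal X$, combined with $A^i\Delta^i=\Delta^{i+1}A^i$ (which rests on $A^{i+1}A^i\equiv0$). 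The only genuinely nontrivial input is the pseudodifferential/parametrix construction for $\Delta^i$ and the resulting Schauder theory on the H\"older scale; but since the excerpt explicitly allows us to quote \cite[Ch.~2, \S 2.2]{Tark95a} and \cite[\S 10.4.3]{Nic07}, I would cite these rather than reprove them, so the expected main obstacle is merely the careful organization of the Hodge-type algebra rather than any hard analysis.
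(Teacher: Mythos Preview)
Your proposal is correct and follows essentially the same route as the paper: the paper's own proof is a two-line citation---classical Hodge theory for smooth sections (\cite[Theorem 2.2.2]{Tark95a}, \cite[Theorem 10.4.29]{Nic07}) together with the standard a priori estimates to extend to the H\"older scale (\cite[Ch.~4--6]{GiTru83}, \cite[\S\S 10.3--10.4]{Nic07})---and you have simply unpacked that citation into the Schauder estimate, the $L^2$ Fredholm alternative, and the Green/parametrix construction of $\varphi^i$. There is no substantive difference in strategy, only in the level of detail you supply.
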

\begin{proof} For $C^\infty$-smooth sections see, for instance, 
\cite[Theorem 2.2.2]{Tark95a} or \cite[Theorem 10.4.29]{Nic07}. 
For the extension to the H\"older spaces we refer to the standard procedure 
using  apriori estimates for elliptic operators, see, for instance, 
\cite[Ch. 4--6]{GiTru83} or \cite[Theorem \S 10.3, 10.4]{Nic07}. 
\end{proof}
Next, for a differential operator $A$ acting on sections of the vector bundle
$E^i$ over $\mathcal{X}$, we denote by
   $C^{s,\lambda}_{E^i} (\mathcal{X}) \cap \mathcal{S}_A$
the space of all the sections 
   $u \in C^{s,\lambda}_{E^i}  (\mathcal{X})$
satisfying $Au = 0$ in the sense of the distributions in $\mathcal{X}$.
This space is obviously a closed subspace of
   $C^{s,\lambda}_{E^i}  (\mathcal{X})$
    and so this is a Banach space under the induced norm.

\begin{corollary} \label{c.Hoelder.d}
Let $s \in {\mathbb Z}_+$, $0<\lambda < 1$. Differential 
complex \eqref{eq.ellcomp} induces continuous linear operators
\begin{equation} 
 \label{eq.Hoelder.d}
A^i \oplus (A^{i-1})^* : C^{s+1,\lambda}_{E^i} (\mathcal{X}) 
  \to C^{s,\lambda}_{E^{i+1}} (\mathcal{X}) \cap \mathcal{S}_{A^{i+1}} 
	\times C^{s,\lambda}_{E^{i-1}} (\mathcal{X}) \cap \mathcal{S}_{(A^{i-2})^*} . 
\end{equation}
These operators are Fredholm. More precisely,  
\begin{enumerate}
\item 
the kernel of  \eqref{eq.Hoelder.d} coincide with the finite-dimensional space 
$\mathcal{H}^i$;
\item 
the (closed) range of operator \eqref{eq.Hoelder.d} consists of all pairs
$(f,g) \in  C^{s,\lambda}_{E^{i+1}} (\mathcal{X}) \cap \mathcal{S}_{A^{i+1}}
\times C^{s,\lambda}_{E^{i-1}} (\mathcal{X}) \cap \mathcal{S}_{(A^{i-2})^*} $, 
satisfying for all  $\hat h \in \mathcal{H}^{i-1} $ and 
 all  $h \in \mathcal{H}^{i+1}$
\begin{equation*}
 (f ,h)_{i+1}  + (g ,\hat h)_{i-1} = 0 .
\end{equation*}
\end{enumerate}
\end{corollary}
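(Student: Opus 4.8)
The plan is to deduce Corollary \ref{c.Hoelder.d} from the Hodge-type decomposition furnished by Theorem \ref{t.Hoelder.Laplace}, using the standard identity $I = \Pi^i + A^{i-1}(A^{i-1})^*\varphi^i + (A^i)^* A^i\varphi^i$ on $C^{s+2,\lambda}_{E^i}(\mathcal{X})$ together with the relations \eqref{eq.Hodge.0}. First I would check that the map \eqref{eq.Hoelder.d} is well defined and continuous: boundedness of each component follows from Lemma \ref{l.diff.oper}(1) (or directly from the ellipticity estimates), while the inclusions of the target into $\mathcal{S}_{A^{i+1}}$ and $\mathcal{S}_{(A^{i-2})^*}$ are exactly the complex identities $A^{i+1}A^i\equiv 0$ and $(A^{i-2})^*(A^{i-1})^*\equiv 0$. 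So far this is routine bookkeeping.

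For the kernel, if $A^i u = 0$ and $(A^{i-1})^* u = 0$ in the distributional sense, then $\Delta^i u = 0$, hence by interior elliptic regularity $u\in C^\infty_{E^i}(\mathcal{X})$ and $u\in\mathcal{H}^i$; conversely $\mathcal{H}^i$ obviously lies in the kernel. Finiteness of $\dim\mathcal{H}^i$ is Theorem \ref{t.Hoelder.Laplace}(1). For the range, I would argue in two directions. Necessity of the orthogonality conditions is immediate: if $f = A^i u$ and $g = (A^{i-1})^* u$, then for $h\in\mathcal{H}^{i+1}$ we have $(f,h)_{i+1} = (A^i u, h)_{i+1} = (u, (A^i)^* h)_i = 0$ since $(A^i)^* h = 0$ for $h$ harmonic (by \eqref{eq.Hodge.0}, $\Pi^{i+1}A^i = 0$, equivalently $(A^i)^*\Pi^{i+1}=0$), and similarly $(g,\hat h)_{i-1} = (u, A^{i-1}\hat h)_i = 0$. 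For sufficiency, given such a pair $(f,g)$, I would set
\begin{equation*}
u = (A^i)^*\varphi^{i+1} f + A^{i-1}\varphi^{i-1} g,
\end{equation*}
observe via Theorem \ref{t.Hoelder.Laplace}(3) that $u\in C^{s+1,\lambda}_{E^i}(\mathcal{X})$, and compute $A^i u$ and $(A^{i-1})^* u$ using $\varphi$, $\Pi$, the commutation of $A^i$ and $(A^i)^*$ with $\varphi^i$ past the Laplacian, and $A^{i+1}A^i\equiv 0$, $(A^{i-1})^* A^{i-1}$ appearing inside $\Delta^i$; the cross terms vanish because $A^i A^{i-1}=0$ and $(A^{i-1})^*(A^i)^*=0$. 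This yields $A^i u = (I-\Pi^{i+1})f = f$ and $(A^{i-1})^* u = (I-\Pi^{i-1})g = g$, the projections dropping out precisely by the hypothesis that $f\perp\mathcal{H}^{i+1}$ and $g\perp\mathcal{H}^{i-1}$, together with $f\in\mathcal{S}_{A^{i+1}}$, $g\in\mathcal{S}_{(A^{i-2})^*}$. Closedness of the range then follows because it is the intersection of the closed subspaces $\mathcal{S}_{A^{i+1}}\times\mathcal{S}_{(A^{i-2})^*}$ with the finite-codimensional subspace cut out by the finitely many continuous functionals $(\cdot,h)_{i+1}$, $(\cdot,\hat h)_{i-1}$; equivalently, Fredholmness is inherited from that of $\Delta^i$.

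The main obstacle I anticipate is the range computation: one must be careful that $\varphi^{i+1}f$ and $\varphi^{i-1}g$ do not a priori satisfy $A^{i+1}\varphi^{i+1}f = 0$ or $(A^{i-2})^*\varphi^{i-1}g = 0$, so the verification $A^i u = f$ cannot proceed naively by pushing $A^i$ through $\varphi$ and collapsing $\Delta$. The clean way around this is to use the second-order identity $A^i(A^i)^*\varphi^{i+1} + (\text{lower-step term})$ and the fact that on $\mathcal{S}_{A^{i+1}}$ one has $\Delta^{i+1}\varphi^{i+1}f = (I-\Pi^{i+1})f$ with $A^i(A^i)^*$ acting appropriately; alternatively, and more transparently, invoke that $(A^i)^*\varphi^{i+1} = \varphi^i (A^i)^*$ type intertwining relations hold because $A^i$ and its adjoint commute with the respective Laplacians up to the complex, so that $A^i u$ can be rewritten as $\Delta^i(\varphi^i f$-type expression$)$ plus harmonic corrections. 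Once the correct intertwining is pinned down — which is precisely the content of the classical Hodge theory for elliptic complexes recalled in Theorem \ref{t.Hoelder.Laplace} — the rest is formal. I would therefore present this step carefully and treat everything else as immediate.
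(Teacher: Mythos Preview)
Your proposal is correct and follows essentially the same route as the paper. The paper packages the parametrix step into a separate Lemma (Lemma~\ref{l.Phi}) introducing the operators $\varPhi_i = (A^i)^*\varphi^{i+1}$ and $\hat\varPhi^i = A^i\varphi^i$ and recording the intertwining relations $A^i\varphi^i = \varphi^{i+1}A^i$, $(A^{i-1})^*\varphi^i = \varphi^{i-1}(A^{i-1})^*$; your explicit candidate $u = (A^i)^*\varphi^{i+1}f + A^{i-1}\varphi^{i-1}g$ is precisely $\varPhi_i f + \hat\varPhi^{i-1} g$, and the ``obstacle'' you flag is exactly what those intertwining relations resolve.
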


\begin{proof} It follows from Theorem \ref{t.Hoelder.Laplace} 
that the kernel of operator \eqref{eq.Hoelder.d}, 
coincides  with the  space $\mathcal{H}^i$. Moreover  
Theorem \ref{t.Hoelder.Laplace} implies the following simple lemma.

\begin{lemma} 
\label{l.Phi}
Let $s \in {\mathbb Z}_+$, $\lambda\in (0,1)$.  
The pseudo-differential operators $\varPhi_i =(A^i)^* \varphi^{i+1} $, 
$\hat \varPhi^i =A^{i} \varphi^{i} $ 
on $\mathcal{X}$ induce continuous maps
\begin{equation*}
   \varPhi_i :\, C^{s,\lambda} _{E^{i+1}}(\mathcal{X}) \to
 C^{s+1,\lambda}_{E^i} (\mathcal{X}) \cap \mathcal{S}_{(A^{i-1})^*} ,\,\,
   \hat \varPhi^i :\, C^{s,\lambda} _{E^{i}}(\mathcal{X}) \to
 C^{s+1,\lambda}_{E^{i+1}} (\mathcal{X}) \cap \mathcal{S}_{A^{i+1}} ,
\end{equation*}
satisfying
\begin{equation}\label{eq.Hodge.1}
\varPhi_i \Pi^{i+1} =0, \,  \Pi^{i} \varPhi_i  =0, \,\, 
\hat \varPhi^i \Pi^{i} =0, \,  \Pi^{i+1} \hat \varPhi^i  =0,  
\end{equation}
\begin{equation}\label{eq.Hodge.2}
\varPhi_i A ^i  u  + 
A^{i-1} \varPhi_{i-1} u   =u- \Pi^i u
\mbox{ if } u \in C^{s,\lambda}_{E^{i}} (\mathcal{X}), \, A^i u \in 
C^{s,\lambda}_{E^{i+1}} (\mathcal{X}),
\end{equation}
\begin{equation}\label{eq.Hodge.3}
\hat \varPhi_{i-1} (A ^{i-1})^*  v  + 
(A^{i})^*\hat \varPhi_{i} v   =v- \Pi^i v, \mbox{ 
if } v \in C^{s,\lambda}_{E^{i}} (\mathcal{X}), \,\, (A^{i-1})^* v \in 
C^{s,\lambda}_{E^{i-1}} (\mathcal{X}).
\end{equation}
\end{lemma}

\begin{proof} Indeed, by the definition of the complex \eqref{eq.ellcomp},
\begin{equation} \label{eq.Hodge.4}
A^i \Delta^i = A^i (A^i)^* A^i = \Delta^{i+1} A^i, \, 
(A^{i-1})^* \Delta^i = (A^{i-1})^* A^{i-1} (A^{i-1})^*  = \Delta^{i-1} (A^{i-1})^*.
\end{equation}
Thus, we conclude that, on the sections with sufficient differentiability, 
\begin{equation} \label{eq.Hodge.5}
A^i \varphi^i = \varphi^{i+1} A^i, \, 
(A^{i-1})^* \varphi^i  = \varphi^{i-1} (A^{i-1})^*,
\end{equation}
and the statement follows from 
Theorem \ref{t.Hoelder.Laplace}. 
\end{proof}
This lemma proves the statement on the range of  operator \eqref{eq.Hoelder.d}. 
\end{proof}

This corollary just reflects the well-known fact that the space $\mathcal{H}^i$ represents 
$i$-th cohomologies of complex \eqref{eq.ellcomp} 
over the scale $C^{s,\lambda}_{E^i} (\mathcal{X})$, see \cite[Ch. 2]{Tark95a}. 

Now we start to study complex \eqref{eq.ellcomp} over the scale
   $C^{2s+k, \lambda,s,\frac{\lambda}{2}}_{E^i} (\mathcal{X}_T)$.
As elliptic operators are not fully consistent with the parabolic dilation principle on
   $\mathcal{X}_T $, 
we should expect some loss of regularity of solutions to the elliptic system 
\begin{equation*} 
   A^i u = f, \,\, (A^{i-1})^* u = g, 
\end{equation*}
 on this scale of function spaces.

Again, for a differential operator $A$ acting on sections of 
 the induced vector bundle $E(t)$ over $\mathcal{X}$,
we write $C^{2s+k, \lambda,s,\gamma}_{E} (\mathcal{X}_T) \cap \mathcal{S}_A$
for the space of all sections of $E$ over $\mathcal{X} $ of the class 
$C^{2s+k, \lambda,s,\gamma}_{E} (\mathcal{X}_T)$ satisfying  in the sense of distributions 
\begin{equation*}
Au\, (\cdot,t) = 0 \mbox{ in } {\mathcal X}
\mbox{ for all fixed } t \in  [0,T].
\end{equation*} 
This space is obviously  a closed subspace of
      $C^{2s+k, \lambda,s,\gamma}_{E} (\mathcal{X}_T)$, 
 and so it is a Banach space under the induced norm.

Actually, we want to extend Corollary \ref{c.Hoelder.d} to 
operator $   A^i \oplus (A^{i-1})^*$ on the anisotropic scale
$C^{2s+k, \lambda,s,\frac{\lambda}{2}}_{E^i} (\mathcal{X}_T)$. 
Similarly to the scale $C^{s,\lambda}_{E^i} (\mathcal{X})$, we use the potentials  
$   \varPhi_i ,\hat \varPhi_i  $ on sections of the induced bundles over $\mathcal{X}_T$.
The variable $t$ enters into the potentials $ (\varPhi_i  f) (x,t)$, 
$ (\hat \varPhi_i  g) (x,t)$ as a parameter and 
the pair $(x,t)$ is assumed to be in the  layer $\mathcal{X}_T $. 
However, elements of the space $C^{2s+k, \lambda,s,\frac{\lambda}{2}}_{E^i} 
(\mathcal{X}_T)$ have additional smoothness with respect to $t$ that can not 
be improved by the potentials $   \varPhi_i ,\hat \varPhi_i  $. 
To avoid this difficulty,  we introduce
$C^{2s+k, \lambda,s,\frac{\lambda}{2}}_{E^{i}} (\mathcal{X}_T)\cap \mathcal{D}_{A^i}$ 
to be the space of all sections $u$ from
$C^{2s+k, \lambda,s,\frac{\lambda}{2}}_{E^{i}} (\mathcal{X}_T) $
with the property that
   $A^i u \in C^{2s+k, \lambda,s,\frac{\lambda}{2}}_{E^{i+1}} (\mathcal{X}_T)$.
We endow this space with the so-called graph norm
\begin{equation*}
 \| u \|_{C^{2s+k, \lambda,s,\frac{\lambda}{2}}_{E^{i}} (\mathcal{X}_T)}
 + \| A^i u\|_{C^{2s+k, \lambda,s,\frac{\lambda}{2}}_{E^{i+1}} (\mathcal{X}_T)}.
\end{equation*}

\begin{lemma} \label{eq.Ai.Dom}
Suppose that $k,s \in {\mathbb Z}_+$, $0 < \lambda < 1$. 
Then the spaces 
\begin{equation*}
C^{2s+k, \lambda,s,\frac{\lambda}{2}}_{E^{i}} (\mathcal{X}_T)\cap \mathcal{D}_{A^i},  
C^{2s+k, \lambda,s,\frac{\lambda}{2}}_{E^{i+1}} (\mathcal{X}_T)\cap \mathcal{D}_{(A^{i})^*},
 C^{2s+k, \lambda,s,\frac{\lambda}{2}}_{E^{i}} (\mathcal{X}_T)\cap \mathcal{D}_{A^i \oplus 
(A^{i-1})^*}
\end{equation*}
are Banach spaces and the operator    $A^i \oplus (A^{i-1})^*$ maps boundedly as 
\begin{equation}
\label{eq.Hoelder.d.t}
C^{2s+k, \lambda,s,\frac{\lambda}{2}}_{E^{i}} (\mathcal{X}_T)\cap \mathcal{D}_{A^i 
\oplus (A^{i-1})^*}
  \to C^{2s+k, \lambda,s,\frac{\lambda}{2}}_{E^{i+1}} 
(\mathcal{X}_T) \cap \mathcal{S}_{A^{i+1}}  \times 
C^{2s+k, \lambda,s,\frac{\lambda}{2}}_{E^{i-1}} 
(\mathcal{X}_T) \cap \mathcal{S}_{(A^{i-2})^*}.
\end{equation}
\end{lemma}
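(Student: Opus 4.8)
The plan is to dispatch the three completeness assertions by one and the same argument, and then to read off the mapping property \eqref{eq.Hoelder.d.t} almost directly from the definition of the graph norm, the only non-formal ingredient being the \emph{closedness} of the operators $A^i$, $(A^i)^*$ and $A^i\oplus(A^{i-1})^*$ in the relevant anisotropic H\"older scale.

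First I would prove that $C^{2s+k,\lambda,s,\frac{\lambda}{2}}_{E^{i}}(\mathcal{X}_T)\cap\mathcal{D}_{A^i}$ is complete. Take a sequence $\{u_m\}$ that is Cauchy in the graph norm. Since $C^{2s+k,\lambda,s,\frac{\lambda}{2}}_{E^{i}}(\mathcal{X}_T)$ and $C^{2s+k,\lambda,s,\frac{\lambda}{2}}_{E^{i+1}}(\mathcal{X}_T)$ are Banach spaces, there are $u$ in the former and $w$ in the latter with $u_m\to u$ and $A^i u_m\to w$ in the corresponding norms, and it remains to identify $w$ with $A^i u$. By item (1) of Lemma \ref{l.diff.oper}, combined with Theorem \ref{t.emb.hoelder.t}, the first order operator $A^i$ acts continuously from $C^{2s+k,\lambda,s,\frac{\lambda}{2}}_{E^{i}}(\mathcal{X}_T)$ into a weaker anisotropic H\"older space, in particular into the space of sections continuous on $\mathcal{X}_T$; hence $A^i u_m\to A^i u$ uniformly on $\mathcal{X}_T$. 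Since also $A^i u_m\to w$ uniformly, we get $w=A^i u$, i.e. $u\in\mathcal{D}_{A^i}$ and $u_m\to u$ in the graph norm. The very same reasoning, with $A^i$ replaced by $(A^i)^*$, proves completeness of $C^{2s+k,\lambda,s,\frac{\lambda}{2}}_{E^{i+1}}(\mathcal{X}_T)\cap\mathcal{D}_{(A^i)^*}$, and applying it simultaneously to the pair $A^i\oplus(A^{i-1})^*$ (whose closedness follows from the closedness of each summand) proves completeness of $C^{2s+k,\lambda,s,\frac{\lambda}{2}}_{E^{i}}(\mathcal{X}_T)\cap\mathcal{D}_{A^i\oplus(A^{i-1})^*}$.

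Next I would verify \eqref{eq.Hoelder.d.t}. For $u$ in $C^{2s+k,\lambda,s,\frac{\lambda}{2}}_{E^{i}}(\mathcal{X}_T)\cap\mathcal{D}_{A^i\oplus(A^{i-1})^*}$ both $A^i u$ and $(A^{i-1})^* u$ belong, by the definition of the domain, to the corresponding anisotropic H\"older spaces, and the sum of their norms is dominated by the graph norm of $u$; hence the operator is bounded. It remains to see that its image lies in the subspaces cut out by $\mathcal{S}_{A^{i+1}}$ and $\mathcal{S}_{(A^{i-2})^*}$, and this is where the complex structure enters: $A^{i+1}\circ A^i\equiv 0$ as a composition of differential operators, while taking the formal adjoint of $A^{i-1}\circ A^{i-2}\equiv 0$ gives $(A^{i-2})^*\circ(A^{i-1})^*\equiv 0$ (with the convention that operators with out-of-range index vanish). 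Consequently $A^{i+1}\big(A^i u\big)=0$ and $(A^{i-2})^*\big((A^{i-1})^* u\big)=0$ in the sense of distributions on $\mathcal{X}_T$; since $A^i u$ and $(A^{i-1})^* u$ are continuous in $(x,t)$, these identities pass to each time slice $t\in[0,T]$, so $A^i u\in\mathcal{S}_{A^{i+1}}$ and $(A^{i-1})^* u\in\mathcal{S}_{(A^{i-2})^*}$, which is precisely the content of \eqref{eq.Hoelder.d.t}.

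I do not expect a serious obstacle here: the assertion is of the \emph{expectable} kind, exactly parallel to Lemma \ref{l.diff.oper}. The one point I would be careful about is the closedness step, namely that the graph-norm limit $u$ genuinely satisfies $A^i u=w$; this rests on having a single Hausdorff topology — here uniform convergence on $\mathcal{X}_T$, supplied by Lemma \ref{l.diff.oper}~(1) and Theorem \ref{t.emb.hoelder.t} — in which simultaneously $A^i u_m\to A^i u$ and $A^i u_m\to w$, together with uniqueness of limits. A minor secondary point is checking that a distributional identity on $\mathcal{X}_T$ of the form $B\,(A^i u)=0$, with $B$ a differential operator on $\mathcal{X}$, restricts to the fixed-$t$ slices, which follows from the $t$-continuity built into the anisotropic H\"older class.
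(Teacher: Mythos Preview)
Your proof follows the same route as the paper's: take a Cauchy sequence, use completeness of the ambient anisotropic H\"older spaces to obtain limits, identify the limits via closedness of $A^i$, and then read off the boundedness of \eqref{eq.Hoelder.d.t} from the graph norm together with the complex identities $A^{i+1}A^i\equiv 0$ and $(A^{i-2})^*(A^{i-1})^*\equiv 0$. The only difference is in the closedness step: the paper identifies $A^i u=w$ \emph{in the sense of distributions} (which works for every $s,k\in\mathbb{Z}_+$), whereas your appeal to Lemma~\ref{l.diff.oper}(1) literally needs $k\ge 1$ and your uniform-convergence argument needs $2s+k\ge 1$; replacing uniform convergence by distributional convergence handles the borderline case $s=k=0$ with no extra effort.
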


\begin{proof} If $\{ u_\nu \}$ is a Cauchy sequence in
  $ C^{2s+k, \lambda,s,\frac{\lambda}{2}}_{E^{i}} (\mathcal{X}_T)\cap \mathcal{D}_{A^i}$,
then it is a Cauchy sequence in the space 
   $C^{2s+k, \lambda,s,\frac{\lambda}{2}}_{E^{i}} (\mathcal{X}_T)$
and
   $\{ A^i u_\nu \}$
is a Cauchy sequence in the space
   $C^{2s+k, \lambda,s,\frac{\lambda}{2}}_{E^{i+1}} (\mathcal{X}_T)$. 
As the spaces are complete we conclude that the sequence $\{ u_\nu \}$ converges in
   $C^{2s+k, \lambda,s,\frac{\lambda}{2}}_{E^{i}} (\mathcal{X}_T)$
to an element $u$ and the sequence $\{ A^i u\} $ converges in
   $C^{2s+k, \lambda,s,\frac{\lambda}{2}}_{E^{i+1}} (\mathcal{X}_T)$
to an element $f$. Obviously, $A^i u = f$ is fulfilled in the sense of distributions and 
thus $A^{i+1} f=0$ in the sense of distributions because of $A^{i+1}\circ A^i\equiv 0$. 
Hence, $u$ belongs to
   $C^{2s+k, \lambda,s,\frac{\lambda}{2}}_{E^{i}} (\mathcal{X}_T)
\cap \mathcal{D}_{A^i}$ 
and it is the limit of the sequence $\{ u_\nu \} $ in this space. 
Thus, we have proved that the space 
   $C^{2s+k, \lambda,s,\frac{\lambda}{2}}_{E^{i}} (\mathcal{X}_T)
\cap \mathcal{D}_{A^i}$ is  a Banach space. 
The proof for other two spaces is similar. 
Moreover, by the very definition of the space, the operator $A^i\oplus (A^{i-1})^*$ induces a
continuous linear operator as is shown in \eqref{eq.Hoelder.d.t}. 
\end{proof}

Now, let  $C^{s,\gamma} ([0,T], \mathcal{H}^i)$ stands 
for the set of sections of the induced bundle $E^i (t)$ over $\mathcal{X}_T$ of the form 
\begin{equation*} 
u (x,t) = \sum_{q=1}^{\dim(\mathcal{H}^i)} c_q (t) b_q (x)
\end{equation*}
where $c_q \in  C^{s,\gamma} [0,T]$ and $\{ b_q  \}_{q=1}^{\dim(\mathcal{H}^i)}$ 
is an $L^2 _{E^i}(\mathcal{X})$-orthonormal basis in $\mathcal{H}^i$.

\begin{corollary}
\label{c.Hoelder.d.t}
Suppose that $k,s \in {\mathbb Z}_+$, $0 < \lambda < 1$. Operator \eqref{eq.Hoelder.d.t}
has closed range consisting of all pairs
$(f,g) \in  C^{2s+k, \lambda,s,\frac{\lambda}{2}}_{E^{i+1}} 
(\mathcal{X}_T) 	\cap \mathcal{S}_{A^{i+1}} 	
\times C^{2s+k, \lambda,s,\frac{\lambda}{2}}_{E^{i-1}} 
(\mathcal{X}_T) \cap \mathcal{S}_{(A^{i-2})^*} 	$,  satisfying 
for all  $h \in \mathcal{H}^{i+1}$, all $\hat h \in \mathcal{H}^{i-1}$ 
 and all $t\in [0,T]$
\begin{equation}\label{eq.Hodge.t}
 (f (\cdot,t) ,h)_{i+1} +  (g (\cdot,t) ,\hat h)_{i-1} = 0.
\end{equation}
The kernel of operator \eqref{eq.Hoelder.d.t} equals to 
$C^{s,\frac{\lambda}{2}} ([0,T], \mathcal{H}^i)$.
\end{corollary}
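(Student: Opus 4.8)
The plan is to reduce Corollary \ref{c.Hoelder.d.t} to the already-established Corollary \ref{c.Hoelder.d} by exploiting the fact that in the anisotropic norm the variable $t$ enters only as a parameter, while the elliptic structure acts purely in the spatial variables. First I would recall from Lemma \ref{l.Phi} the potentials $\varPhi_i = (A^i)^*\varphi^{i+1}$ and $\hat\varPhi_i = A^i\varphi^i$, together with the Hodge-type decompositions \eqref{eq.Hodge.2}, \eqref{eq.Hodge.3}. The key point is that each of $\varphi^i$, $\varPhi_i$, $\hat\varPhi_i$ is a \emph{time-independent} pseudo-differential operator of order $\le 1$ on $\mathcal X$; applying it to a section of the induced bundle $E(t)$ over $\mathcal X_T$ commutes with $\partial_t$ and with the H\"older estimates in $t$, so by Lemma \ref{l.diff.oper} together with the mapping properties of $\varphi^i$ in Theorem \ref{t.Hoelder.Laplace} (and the apriori estimates that were used to prove it), $\varPhi_i$ maps $C^{2s+k,\lambda,s,\frac{\lambda}{2}}_{E^{i+1}}(\mathcal X_T)$ continuously into $C^{2s+k+1,\lambda,s,\frac{\lambda}{2}}_{E^i}(\mathcal X_T)$, which embeds into $C^{2s+k,\lambda,s,\frac{\lambda}{2}}_{E^i}(\mathcal X_T)$; hence in particular $\varPhi_i$ and $\hat\varPhi_i$ act boundedly on the anisotropic scale. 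I would state this as a short lemma (the anisotropic analogue of Lemma \ref{l.Phi}) and verify it by slicing in $t$: for each fixed $t$ the spatial estimates of Corollary \ref{c.Hoelder.d} apply to $u(\cdot,t)$, and the uniformity of the constants in those estimates yields control of the $\sup_t$ part of the norm, while subtracting the estimates at two times $t',t''$ controls the H\"older-in-$t$ seminorm.

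Next I would prove the statement about the range. Necessity of \eqref{eq.Hodge.t} is immediate: if $(f,g) = (A^i u, (A^{i-1})^* u)$ for some $u$ in the domain, then for each fixed $t$ and each $h\in\mathcal H^{i+1}$, $\hat h\in\mathcal H^{i-1}$ we have $(A^i u(\cdot,t),h)_{i+1} = (u(\cdot,t),(A^i)^* h)_i = 0$ since $(A^i)^* h = 0$, and likewise $(g(\cdot,t),\hat h)_{i-1} = (u(\cdot,t),A^{i-1}\hat h)_i = 0$; summing gives \eqref{eq.Hodge.t}. For sufficiency, given $(f,g)$ in the target space satisfying \eqref{eq.Hodge.t} for all $t$, I would set
\begin{equation*}
u(x,t) = (\varPhi_i f)(x,t) + (\hat\varPhi_{i-1} g)(x,t).
\end{equation*}
By the anisotropic mapping lemma, $u \in C^{2s+k,\lambda,s,\frac{\lambda}{2}}_{E^i}(\mathcal X_T)$, and moreover $A^i u$ and $(A^{i-1})^* u$ again lie in the anisotropic scale because the $\varphi$'s commute with the $A$'s on sufficiently smooth sections by \eqref{eq.Hodge.5}, so $u$ belongs to the domain $\mathcal D_{A^i\oplus(A^{i-1})^*}$. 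Using \eqref{eq.Hodge.2}, \eqref{eq.Hodge.3} fibrewise in $t$ together with $A^{i+1} f = 0$, $(A^{i-2})^* g = 0$, and the mixed relations $\varPhi_i A^{i-1} = 0$-type identities (which follow from \eqref{eq.Hodge.1} and the complex property), one computes $A^i u = f - \Pi^{i+1} f$ and $(A^{i-1})^* u = g - \Pi^{i-1} g$; but $\Pi^{i+1} f(\cdot,t) = 0$ and $\Pi^{i-1} g(\cdot,t) = 0$ for every $t$ precisely by \eqref{eq.Hodge.t}. This shows $(f,g)$ is in the range, and closedness of the range follows since the target condition \eqref{eq.Hodge.t} is closed in the anisotropic topology (the $L^2$ pairings with the finitely many basis elements of $\mathcal H^{i\pm 1}$ are continuous functions of $t$ depending continuously on $f,g$).

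For the kernel, note that $u$ lies in the kernel of \eqref{eq.Hoelder.d.t} iff for each fixed $t$ the section $u(\cdot,t)$ lies in the kernel of the spatial operator \eqref{eq.Hoelder.d}, which by Corollary \ref{c.Hoelder.d}(1) is exactly $\mathcal H^i$. Hence $u(\cdot,t) = \sum_{q} c_q(t) b_q$ with $c_q(t) = (u(\cdot,t),b_q)_i$, and it remains to identify the regularity of the coefficients: since pairing with the smooth fixed section $b_q$ is continuous on the anisotropic space and extracts the time-dependence, $c_q$ inherits exactly $C^{s,\frac{\lambda}{2}}[0,T]$ regularity, i.e.\ $u \in C^{s,\frac{\lambda}{2}}([0,T],\mathcal H^i)$; conversely every such $u$ is clearly annihilated. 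I expect the main technical obstacle to be the first lemma on the mapping properties of the potentials $\varPhi_i,\hat\varPhi_i$ on the anisotropic scale: one must check carefully that the spatial pseudo-differential estimates are uniform in the parameter $t$ and survive the H\"older-in-$t$ difference quotients, and one must handle the slight loss of spatial regularity (the potentials gain one spatial derivative, not two, hence the target lands in $C^{2s+k,\lambda,s,\frac{\lambda}{2}}$ only after the embedding Theorem \ref{t.emb.hoelder.t}), exactly as flagged in the remarks preceding the statement about the loss of regularity for elliptic systems on $\mathcal X_T$.
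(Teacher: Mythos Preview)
Your proposal is correct and follows essentially the same route as the paper: the paper likewise isolates the anisotropic mapping properties of $\Pi^i$, $\varPhi_i$, and $\hat\varPhi_i$ as separate lemmata (proved by slicing in $t$ and invoking the spatial estimates of Theorem \ref{t.Hoelder.Laplace} and Lemma \ref{l.Phi} uniformly in the parameter), and then reads off both the kernel and the range from the Hodge identities \eqref{eq.Hodge.2}, \eqref{eq.Hodge.3} applied fibrewise. One small overclaim: $\varPhi_i$ does not quite land in $C^{2s+k+1,\lambda,s,\frac{\lambda}{2}}$ because the H\"older-in-$t$ seminorm does not gain the extra spatial derivative, but since you only use the embedding into $C^{2s+k,\lambda,s,\frac{\lambda}{2}}$ this does not affect the argument.
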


\begin{proof} We begin with the following lemma.

\begin{lemma} 
\label{l.range.Pi}
The pseudo-differential operator $\Pi^i $ induces continuous maps 
\begin{equation} \label{eq.Pi}
\Pi^i :\, C^{2s+k, \lambda,s,\frac{\lambda}{2}}_{E^{i}} (\mathcal{X}_T) \cap
\mathcal{D}_{A^i} \to C^{2s+k', \lambda,s,\frac{\lambda}{2}}_{E^{i}} (\mathcal{X}_T)\cap 
\mathcal{D}_{A^i} 
\end{equation}
for any $k'\in \mathbb N$. The range of operator \eqref{eq.Pi} 
coincides with $C^{s,\frac{\lambda}{2}} ([0,T], \mathcal{H}^i)$.  
\end{lemma}

\begin{proof} Indeed, as the space 
$C^{2s+k, \lambda,s,\frac{\lambda}{2}}_{E^i} (\mathcal{X}_T) $ 
is continuously embedded to $L^2 _{E^i} (\mathcal{X})$ we see that 
\begin{equation*}
(\Pi^i u ) (x,t) = \sum_{q=1}^{\dim(\mathcal{H}^i)} (u (\cdot,t), b_q)_i \, b_q (x)
\end{equation*}
for each $u\in C^{2s+k, \lambda,s,\frac{\lambda}{2}}_{E^i} (\mathcal{X}_T) $. Set $c_q (t) =
(u (\cdot,t), b_q)_i $.  Then 
\begin{equation*}
\| (d^j c_q (t))/dt^j\|_{C^{0,0} [0,T]} = \sup_{t\in [0,T]}
|(\partial_t ^j u (\cdot,t), b_q)
_{i}|\leq C_\mathcal{X} 
\| \partial_t^j u \|_{C^{0,0,0,0}_{E^i}(\mathcal{X}_T)}
\end{equation*}
for each $0\leq j \leq s$, i.e. $\Pi^i u\in  C^{s,0} ([0,T], \mathcal{H}^i)$. 
Moreover, 
\begin{equation*}
\langle (d^jc_q)/dt^j\rangle_{\frac{\lambda}{2},[0,T]} = 
\langle (\partial_t ^j u (\cdot,t), b_q)
_{i} \rangle \leq C_\mathcal{X} 
\|  u \|_{C^{2s+k, \lambda,s,\frac{\lambda}{2}}_{E^{i}} (\mathcal{X}_T)}
\end{equation*}
for each $0\leq j \leq s$, i.e. $\Pi ^i 
u\in  C^{s,\frac{\lambda}{2}} ([0,T], \mathcal{H}^i)$.

If the section $v$ belongs to $ C^{s,\frac{\lambda}{2}} ([0,T], \mathcal{H}^i)$ 
then, obviously, $\Pi ^i v =v$. Moreover, 
\begin{equation}
\label{eq.Pi.v}
\|v \|_{C^{2s+k', \lambda,s,0}_{E^i} (\mathcal{X}_T)} \leq c\,
\sum_{q=1}^{\dim(\mathcal{H}^i)} \| c_q (t)\|_{C^{s,0} ([0,T])}
\| b_q \|_{C^{2s+k',\lambda}_{E^i} (\mathcal{X})},
\end{equation}
with a constant $c>0$ independent on $v$, 
i.e. $v \in C^{2s+k,\lambda,s,0}_{E^i} (\mathcal{X}_T)$. 
Moreover, as $c_q \in  C^{s,\frac{\lambda}{2}}[0,T]$ then, for all 
$j\leq s$
\begin{equation*}
\sup_{t', t'' \in [0,T] \atop t' \neq t''}
   \frac{\|\partial_{t'}^j v (\cdot,t') - 
	\partial_{t''}^j  v (\cdot,t'') \|_{C^{2(s-j)+k',0}_{E^i} (\mathcal{X})}}{|t'-t''|
	^{\frac{\lambda}{2}}}\leq
\end{equation*}
\begin{equation*}
\sum_{q=1}^{\dim(\mathcal{H}^i)}\sup_{t\in [0,T]} 
\langle (d^j c_q (t))/dt^j\rangle_{\frac{\lambda}{2}, [0,T]} 
\| b_q \|_{C^{2(s-j)+k',0} _{E^i}(\mathcal{X})},
\end{equation*}
i.e. $v \in C^{2s+k',\lambda, s,\frac{\lambda}{2}}_{E^i} (\mathcal{X}_T)$. 
This proves that the range of  operator  \eqref{eq.Pi} coincides 
with the space $C^{s,\frac{\lambda}{2}} ([0,T], \mathcal{H}^i)$.

As $\Pi^i$ is a bounded linear operator 
from $L^2_{E^i} ({\mathcal X})$ to the finite-dimensional 
space $\mathcal{H}^i \subset C^\infty ({\mathcal X})$ 
we see that, for any $s'\in \mathbb N$, $0\leq \lambda'< 1$, 
\begin{equation*}
 \sup_{t \in [0,T]} \| \partial^j_t [\Pi^i u (\cdot, t) 
 \|_{C^{s',\lambda'} _{E^i}(\mathcal{X})} =    \sup_{t \in [0,T]}
   \| \Pi^i  \partial^j_t u(\cdot, t)  \|_{C^{s',\lambda'}_{E^i} (\mathcal{X})} \leq 
\end{equation*}
\begin{equation*}
c_1\, \sup_{t \in [0,T]} \| \Pi^i  \partial_t^j u (\cdot, t) \|_{L^{2}_{E^i} (\mathcal{X})}
\leq c_2\, \sup_{t \in [0,T]}
   \| \partial_t^j u (\cdot, t) \|_{L^{2}_{E^i} (\mathcal{X})} \leq 
\end{equation*}
\begin{equation*}
   c_3\, \sup_{t \in [0,T]}  \| \partial_t^j u (\cdot, t) \|_{C^{2(s-j)+k,\lambda} 
_{E^i}(\mathcal{X})}
\end{equation*}
for all $0 \leq j \leq s$ and  $u \in 
C^{2s+k,\lambda,s,\frac{\lambda}{2}}_{E^i} (\mathcal{X}_T)$ 
where the existence of the constant $c_1$ is granted by 
the well known property of the finite-dimensional spaces, 
the constant $c_2$ is granted by the continuity 
of the projection $\Pi^i$ on $L^2_{E^i} (\mathcal{X})$ 
and the constant $c_3$ is granted by the continuity of the 
embedding $C^{2(s-j)+k,\lambda}_{E^i} (\mathcal{X}) \to L^2 _{E^i}(\mathcal{X})$.

Similarly, 
for any $k'\in \mathbb N$, 
\begin{equation*}
\sup_{t',t'' \in [0,T] \atop t' \neq t''}\frac{ \| \Pi^i  \partial^j_t u\, (\cdot, t') - 
\Pi^i  \partial^j_t u\, (\cdot, t'') \|_{C^{2(s-j)+k',0}_{E^i} (\mathcal{X})}}
{|t'-t''|^{\frac{\lambda}{2}}} \leq 
\end{equation*}
\begin{equation*}
c_1\, \sup_{t',t'' \in [0,T] \atop t' \neq t''}\frac{ \| \Pi^i (\partial^j_t u (\cdot, t') - 
\partial^j_t u (\cdot, t'')) \|_{L^{2} _{E^i}(\mathcal{X})}}
        {|t'-t''|^{\frac{\lambda}{2}}} \leq
\end{equation*}
\begin{equation*}
   c_2\,
   \sup_{t',t'' \in [0,T] \atop t' \neq t''}
   \frac{ \| \partial^j_t u (\cdot, t') - \partial^j_t u (\cdot, t'')
          \|_{C^{2 (s-j)+k,0}_{E^i} (\mathcal{X})}}
        {|t'-t''|^{\frac{\lambda}{2}}} 
\end{equation*}
where the existence of the constant $c_1$ is granted by the well known property of the 
finite-dimensional spaces, the constant $c_2$ is granted by the continuity of the projection 
$\Pi^i$ and the continuity of the embedding $C^{2(s-j)+k',0}_{E^i} (\mathcal{X}) \to 
L^2_{E^i} (\mathcal{X})$.
\end{proof}

Now, if $u$ belongs to the kernel of operator \eqref{eq.Hoelder.d.t} then
\begin{equation} 
\label{eq.ker.d+d*}
A^i u (\cdot,t) =0 \mbox{ and } (A^{i-1})^* u (\cdot,t) =0 \mbox{ for each } 
t \in [0,T]. 
\end{equation}
This is equivalent to the fact that $\Pi^i u (\cdot,t) = u(\cdot,t)$
 for each $t \in [0,T]$.  
Hence the kernel of operator \eqref{eq.Hoelder.d.t} equals to $C^{s,\frac{\lambda}{2}} 
([0,T], \mathcal{H}^i)$.   

\begin{lemma}
\label{l.Phi.t}
The pseudo-differential operator $\varPhi_i $ induces continuous maps  
\begin{equation} \label{eq.Phi.d.loss}
\varPhi_i :\, C^{2s+k,\lambda,s,\frac{\lambda}{2}}_{E^{i+1}} (\mathcal{X}_T) 
\to C^{2s+k,\lambda,s,\frac{\lambda}{2}}_{E^i} (\mathcal{X}_T)
 \cap \mathcal{S}_{(A^{i-1})^*},
\end{equation}
\begin{equation} \label{eq.Phi.d}
\varPhi_i :\, C^{2s+k,\lambda,s,\frac{\lambda}{2}}_{E^{i+1}} (\mathcal{X}_T) \cap 
\mathcal{D}_{A^{i+1}} \to C^{2s+k,\lambda,s,\frac{\lambda}{2}}_{E^i} (\mathcal{X}_T)
\cap \mathcal{D}_{A^i} \cap \mathcal{S}_{(A^{i-1})^*},
\end{equation}
satisfying \eqref{eq.Hodge.1}, \eqref{eq.Hodge.2} on the space 
$C^{2s+k,\lambda,s,\frac{\lambda}{2}}_{E^i} (\mathcal{X}_T)
\cap \mathcal{D}_{A^i} $.  
\end{lemma}

\begin{proof} 
The use of Lemma \ref{l.Phi} yields
\begin{equation*}
   \sup_{t \in[0,T]}
   \| \partial^j_t \varPhi_i  f (\cdot, t) 
 \|_{C^{2(s-j)+k+1,\lambda}_{E^i} (\mathcal{X})}
 = \sup_{t \in [0,T]} \| \varPhi_i  \partial^j_t f (\cdot, t)  
     \|_{C^{2(s-j)+k+1,\lambda}_{E^i} (\mathcal{X})} \leq 
\end{equation*}   
\begin{equation*}
c\,   \sup_{t \in [0,T]}
   \| \partial_t^j f (\cdot, t) \|_{C^{2(s-j)+k,\lambda} _{E^i} (\mathcal{X})} 
\end{equation*}
for all $0 \leq j \leq s$ and  $f \in 
C^{2s+k, \lambda,s,\frac{\lambda}{2}}_{E^{i+1}} (\mathcal{X}_T)$. 
Besides, 
\begin{equation*}
 \sup_{t',t'' \in [0,T] \atop t' \neq t''} \frac{ \| \varPhi_i  \partial^j_t f\, 
(\cdot, t') - \varPhi_i  \partial^j_t f\, (\cdot, t'')
          \|_{C^{2 (s-j)+k,0}_{E^{i}} (\mathcal{X})}}
        {|t'-t''|^{\frac{\lambda}{2}}}
\leq
\end{equation*}
\begin{equation*} 
   c_1 \,\sup_{t',t'' \in [0,T] \atop t' \neq t''}
   \frac{ \| \partial^j_t f (\cdot, t') - \partial^j_t f (\cdot, t'')
          \|_{C^{2(s-j)+k-1,\lambda}_{E^{i+1}} (\mathcal{X})}}
        {|t'-t''|^{\frac{\lambda}{2}}}
 \leq 
\end{equation*}
\begin{equation*}
   c_2\,
   \sup_{t',t'' \in \mathcal{I}_T \atop t' \neq t''}
   \frac{ \| \partial^j_t f (\cdot, t') - \partial^j_t f (\cdot, t'')
          \|_{C^{2 (s-j)+k,0}_{E^{i+1}} (\mathcal{X})}}
        {|t'-t''|^{\frac{\lambda}{2}}}
\end{equation*}
for all $f \in   C^{2s+k,\lambda,s,\frac{\lambda}{2}}_{E^{i+1}} (\mathcal{X}_T)$,
 where  the constant $c_1$ is granted by the continuity 
of the operator $\varPhi_i$ on the scale $C^{s,\lambda} _{E^i} (\mathcal X)$ 
and $c_2$ is a constant granted by Embedding Theorem 
\ref{t.emb.hoelder.t}. Thus we conclude that 
  operator \eqref{eq.Phi.d.loss} is bounded. 

Identities \eqref{eq.Hodge.1}, \eqref{eq.Hodge.2}, are still valid 
for each $t \in [0,T]$ with $t$ regarded as a parameter if 
$f \in  C^{2s+k,\lambda,s,\frac{\lambda}{2}}_{E^{i+1}} (\mathcal{X}_T) \cap 
\mathcal{D}_{A^{i+1}} $.  
Then the definition of the operator $\varPhi_i$ and 
\eqref{eq.Hodge.2} imply that for each $f \in  C^{2s+k,\lambda,s,\frac{\lambda}{2}}
_{E^{i+1}} (\mathcal{X}_T) \cap \mathcal{D}_{A^{i+1}} $
\begin{equation*}
(A^{i-1})^* \varPhi_i f =0 , \,\, A^i \varPhi_i f =  f  - 
 \varPhi_{i+1} A^{i+1}f- \Pi^{i+1} f.
\end{equation*}
 Thus, $\varPhi_i $ maps 
$C^{2s+k,\lambda,s,\frac{\lambda}{2}}_{E^{i+1}} (\mathcal{X}_T) \cap 
\mathcal{D}_{A^{i+1}}$ continuously to  
$ C^{2s+k,\lambda,s,\frac{\lambda}{2}}_{E^{i}} (\mathcal{X}_T) \cap 
\mathcal{D}_{A^{i}}$  because of the continuity of operators \eqref{eq.Pi}, 
\eqref{eq.Phi.d.loss}.  
\end{proof}

\begin{lemma}
\label{l.Phi.t.*}
The pseudo-differential operator $\hat \varPhi_i $ induces continuous maps  
\begin{equation} \label{eq.Phi.d.loss.*}
\hat \varPhi_i :\, C^{2s+k,\lambda,s,\frac{\lambda}{2}}_{E^{i}} (\mathcal{X}_T) 
\to C^{2s+k,\lambda,s,\frac{\lambda}{2}}_{E^{i+1}} (\mathcal{X}_T)
 \cap \mathcal{S}_{A^{i+1}},
\end{equation}
\begin{equation} \label{eq.Phi.d.*}
\hat \varPhi_i :\, C^{2s+k,\lambda,s,\frac{\lambda}{2}}_{E^{i}} (\mathcal{X}_T) \cap 
\mathcal{D}_{(A^{i-1})^*} \to C^{2s+k,\lambda,s,\frac{\lambda}{2}}_{E^{i+1}} (\mathcal{X}_T)
\cap \mathcal{D}_{(A^i)^*} \cap \mathcal{S}_{A^{i+1}},
\end{equation}
satisfying \eqref{eq.Hodge.1}, \eqref{eq.Hodge.3} on the space 
$C^{2s+k,\lambda,s,\frac{\lambda}{2}}_{E^i} (\mathcal{X}_T)
\cap \mathcal{D}_{(A^{i-1})^*} $.  
\end{lemma}

\begin{proof} 
Similar to the proof of lemma \ref{l.Phi.t.*}.
\end{proof}

The statement on the range of the operator  
\eqref{eq.Hoelder.d.t} follow because 
formulas \eqref{eq.Hodge.2}, \eqref{eq.Hodge.3} are still valid on 
the spaces $C^{2s+k,\lambda,s,\frac{\lambda}{2}}_{E^i} (\mathcal{X}_T)
\cap \mathcal{D}_{A^i}$ and $C^{2s+k,\lambda,s,\frac{\lambda}{2}}_{E^i} (\mathcal{X}_T)
\cap \mathcal{D}_{(A^{i-1})^*}$ respectively.
\end{proof}

Now we are ready to define the Leray-Helmholtz type projection 
onto the spaces $C^{s,\lambda} _{E^i}({\mathcal X}) \cap 
\mathcal{S}_{(A^{i-1})^*}$ and 
$C^{2s+k,\lambda,s,\frac{\lambda}{2}} _{E^i}({\mathcal X}_T) \cap 
\mathcal{S}_{(A^{i-1})^*}$. 

\begin{lemma} 
\label{l.Helmholtz}
Let $s,k \in {\mathbb Z}_+$, $0<\lambda<1$.  
The pseudo-differential operator $\pi^i=(A^i)^*  A^i \varphi^{i} + \Pi^i$ 
on $\mathcal{X}$ induce continuous surjective maps
\begin{equation} \label{eq.proj.cont}
   \pi^i :\, C^{s,\lambda} _{E^{i}}(\mathcal{X}) \to
 C^{s,\lambda}_{E^i} (\mathcal{X}) \cap \mathcal{S}_{(A^{i-1})^*} ,
\end{equation}
\begin{equation} \label{eq.proj.cont.t}
\pi^i :\, C^{2s+k,\lambda,s,\frac{\lambda}{2}} _{E^{i}}(\mathcal{X}_T) 
\cap \mathcal{D}_{A^i}\to
 C^{2s+k,\lambda,s,\frac{\lambda}{2}}_{E^i} (\mathcal{X}_T)
\cap \mathcal{D}_{A^i}  \cap \mathcal{S}_{(A^{i-1})^*} ,
\end{equation}
\begin{equation} \label{eq.proj.cont.t.loss}
\pi^i :\, C^{2s+k+1,\lambda,s,\frac{\lambda}{2}} _{E^{i}}(\mathcal{X}_T) 
\to C^{2s+k,\lambda,s,\frac{\lambda}{2}}_{E^i} (\mathcal{X}_T) \cap \mathcal{D}_{A^i} 
\cap \mathcal{S}_{(A^{i-1})^*} , 
\end{equation}
satisfying
\begin{equation}\label{eq.proj.1}
\pi^{i} \circ \pi^i u =\pi^i u,\, (\pi^{i} u,u)_{i} = 
(u,\pi^{i} u)_{i},  \, 
 (\pi^{i} u, (I-\pi^i)u)_{i}  =0,
\end{equation}
for all $u\in C^{s,\lambda} _{E^{i}}(\mathcal{X})$ or $u \in 
C^{2s+k,\lambda,s,\frac{\lambda}{2}} _{E^{i}}(\mathcal{X}_T) \cap \mathcal{D}_{A^i}$. 
\end{lemma}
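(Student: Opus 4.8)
The plan is to express $\pi^i$ through the potentials $\varPhi_i = (A^i)^*\varphi^{i+1}$, $\hat\varPhi_{i-1} = A^{i-1}\varphi^{i-1}$ and the projection $\Pi^i$, whose behaviour on both scales is already at our disposal. Using $\Delta^i = (A^i)^* A^i + A^{i-1}(A^{i-1})^*$ together with the identity $\Delta^i \varphi^i = I - \Pi^i$ of Theorem \ref{t.Hoelder.Laplace} and the commutation relations \eqref{eq.Hodge.5}, I first record
\begin{equation*}
\pi^i \;=\; (A^i)^* A^i \varphi^i + \Pi^i \;=\; \varPhi_i A^i + \Pi^i \;=\; I - \hat\varPhi_{i-1}(A^{i-1})^* ,
\end{equation*}
which are operator identities on sufficiently smooth sections and hence, by continuity, on the H\"older scales in question: indeed $(A^i)^* A^i \varphi^i = (A^i)^* \varphi^{i+1} A^i$ by \eqref{eq.Hodge.5}, while $(A^i)^* A^i \varphi^i = (\Delta^i - A^{i-1}(A^{i-1})^*)\varphi^i = (I-\Pi^i) - A^{i-1}\varphi^{i-1}(A^{i-1})^*$ again by \eqref{eq.Hodge.5}. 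From the last form it is immediate that $(A^{i-1})^*\pi^i = 0$, so the range of $\pi^i$ lies in $\mathcal{S}_{(A^{i-1})^*}$, and that $\pi^i v = v$ whenever $(A^{i-1})^* v = 0$.

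Turning to continuity, for \eqref{eq.proj.cont} the defining expression already suffices: $\varphi^i$ carries $C^{s,\lambda}_{E^i}(\mathcal{X})$ into $C^{s+2,\lambda}_{E^i}(\mathcal{X})$ by Theorem \ref{t.Hoelder.Laplace}, the first order operators $A^i$, $(A^i)^*$ each cost one derivative, and $\Pi^i$ maps $C^{s,\lambda}_{E^i}(\mathcal{X})$ into $\mathcal{H}^i \subset C^\infty$; the inclusion of the range into $\mathcal{S}_{(A^{i-1})^*}$ comes from $(A^{i-1})^*(A^i)^* = (A^i A^{i-1})^* = 0$ and $\mathcal{H}^i \subset \mathcal{S}_{(A^{i-1})^*}$. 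For \eqref{eq.proj.cont.t} and \eqref{eq.proj.cont.t.loss} I use the middle form $\pi^i u = \varPhi_i(A^i u) + \Pi^i u$. If $u$ belongs to $C^{2s+k,\lambda,s,\frac{\lambda}{2}}_{E^i}(\mathcal{X}_T)\cap\mathcal{D}_{A^i}$, then $A^i u$ lies in $C^{2s+k,\lambda,s,\frac{\lambda}{2}}_{E^{i+1}}(\mathcal{X}_T)$ by the very definition of $\mathcal{D}_{A^i}$; if $u$ belongs to $C^{2s+k+1,\lambda,s,\frac{\lambda}{2}}_{E^i}(\mathcal{X}_T)$, the same holds by Lemma \ref{l.diff.oper}. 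In either case $A^{i+1}(A^i u) = 0$, so $A^i u \in C^{2s+k,\lambda,s,\frac{\lambda}{2}}_{E^{i+1}}(\mathcal{X}_T)\cap\mathcal{D}_{A^{i+1}}$, and Lemma \ref{l.Phi.t} (formulas \eqref{eq.Phi.d.loss}, \eqref{eq.Phi.d}) gives $\varPhi_i(A^i u) \in C^{2s+k,\lambda,s,\frac{\lambda}{2}}_{E^i}(\mathcal{X}_T)\cap\mathcal{D}_{A^i}\cap\mathcal{S}_{(A^{i-1})^*}$ with norm controlled by $\|A^i u\|$, while Lemma \ref{l.range.Pi} gives $\Pi^i u \in C^{s,\frac{\lambda}{2}}([0,T],\mathcal{H}^i)$, which sits in every such space with $A^i\Pi^i u = 0$ and $(A^{i-1})^*\Pi^i u = 0$. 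Summing the two contributions and observing that $A^i\pi^i u = A^i\varPhi_i(A^i u) \in C^{2s+k,\lambda,s,\frac{\lambda}{2}}_{E^{i+1}}(\mathcal{X}_T)$ yields the claimed continuous maps. The crucial — and essentially the only nontrivial — point here is that the bare operator $(A^i)^* A^i\varphi^i$ loses one spatial derivative on the anisotropic parabolic scale, this being exactly the loss of regularity flagged earlier; rewriting it as $\varPhi_i A^i$ moves the loss onto the datum $A^i u$, which is assumed as regular as $u$ for \eqref{eq.proj.cont.t} and one step more regular for \eqref{eq.proj.cont.t.loss}.

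For surjectivity of \eqref{eq.proj.cont} and \eqref{eq.proj.cont.t} the target space lies inside the domain, and there $\pi^i$ is the identity by the first paragraph, so the maps are onto; applied to $\pi^i u$, the same identity gives $\pi^i\circ\pi^i u = \pi^i u$, the first relation in \eqref{eq.proj.1}. The surjectivity of \eqref{eq.proj.cont.t.loss} is obtained in the same spirit, now producing for a given element $v$ of the target a preimage $u \in C^{2s+k+1,\lambda,s,\frac{\lambda}{2}}_{E^i}(\mathcal{X}_T)$ with $\pi^i u = v$ through the solvability of $A^i$ furnished by Corollary \ref{c.Hoelder.d.t}, the bookkeeping of the regularity loss again being the delicate part. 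Finally, $\varphi^i$ is formally self-adjoint on $L^2_{E^i}(\mathcal{X})$ — a consequence of the identities of Theorem \ref{t.Hoelder.Laplace}, the solvability statement in its part (2), and the formal self-adjointness of $\Delta^i = (A^i)^* A^i + A^{i-1}(A^{i-1})^*$ — so $\pi^i = I - A^{i-1}\varphi^{i-1}(A^{i-1})^*$ is formally self-adjoint, whence $(\pi^i u, u)_i = (u, \pi^i u)_i$; and the remaining relation follows from $(A^{i-1})^*\pi^i u = 0$ via
\begin{equation*}
(\pi^i u,\ (I-\pi^i)u)_i \;=\; \bigl(\pi^i u,\ A^{i-1}\varphi^{i-1}(A^{i-1})^* u\bigr)_i \;=\; \bigl((A^{i-1})^*\pi^i u,\ \varphi^{i-1}(A^{i-1})^* u\bigr)_{i-1} \;=\; 0 .
\end{equation*}
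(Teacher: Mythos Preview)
Your proof is correct and follows essentially the same route as the paper's: rewrite $\pi^i = \varPhi_i A^i + \Pi^i$ via the commutation relations \eqref{eq.Hodge.5}, deduce continuity on the anisotropic scale from Lemmata \ref{l.range.Pi} and \ref{l.Phi.t}, and obtain surjectivity and idempotency from the fact that $\pi^i$ is the identity on $\mathcal{S}_{(A^{i-1})^*}$. Your additional identity $\pi^i = I - \hat\varPhi_{i-1}(A^{i-1})^*$ is a nice streamlining device (it makes $(A^{i-1})^*\pi^i=0$ and $\pi^i|_{\ker(A^{i-1})^*}=\mathrm{id}$ immediate), and your route to self-adjointness via the formal self-adjointness of $\varphi^{i-1}$ is a minor but legitimate variant of the paper's argument, which instead writes $u = \pi^i u + A^{i-1}(A^{i-1})^*\varphi^i u$ and integrates by parts directly.
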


\begin{proof} Using \eqref{eq.Hodge.4}, \eqref{eq.Hodge.5} we see that 
\begin{equation} \label{eq.pi.Dom}
\pi^i = \varPhi _i A^i+\Pi^i \mbox{ on  } 
C^{2s+k,\lambda,s,\frac{\lambda}{2}} _{E^{i}}(\mathcal{X}_T) 
\cap \mathcal{D}_{A^i}.
\end{equation}
Therefore the continuity 
of operators \eqref{eq.proj.cont}, \eqref{eq.proj.cont.t},  \eqref{eq.proj.cont.t.loss} 
follows from Lemmata \ref{l.Phi}, \ref{l.Phi.t}, formula \eqref{eq.Hodge.0} and the 
property of complex \eqref{eq.ellcomp}: $(A^{i-1})^* \circ (A^{i})^* \equiv 0$. 

The surjectivity of the operators follows from formula \eqref{eq.Hodge.3}. 
Indeed, if $v\in  C^{s,\lambda}_{E^i} (\mathcal{X}) \cap \mathcal{S}_{(A^{i-1})^*}$ then
\eqref{eq.Hodge.3} and Lemma \ref{l.Phi} imply 
\begin{equation*}
v =(A^i)^*\hat \varPhi^i v +\Pi ^i v= (A^i)^*A^i \varphi^i v+\Pi ^i v.
\end{equation*} 
If $v\in  C^{2s+k,\lambda,s, \frac{\lambda}{2} }_{E^i} (\mathcal{X}_T) \cap 
\mathcal{D}_{A^i}  \cap \mathcal{S}
_{(A^{i-1})^*}$  
then, using \eqref{eq.Hodge.3} and   Corollary \ref{c.Hoelder.d.t}, 
we again conclude that 
\begin{equation*}
v = (A^i)^*A^i \varphi^i v+\Pi ^i v= \pi^i v.
\end{equation*} 

Next, using \eqref{eq.Hodge.0}, \eqref{eq.Hodge.4}, \eqref{eq.Hodge.5}, we see that
\begin{equation*}
\pi^{i} \circ \pi^i u = ((A^i)^*  A^i \varphi^{i} + \Pi^i)\circ
((A^i)^*  A^i  \varphi^{i}+ \Pi^i)u =
\end{equation*}
\begin{equation*}
((A^i)^* A^i (A^i)^* \varphi^{i+1}   A^i \varphi^{i} + \Pi^i) u = \pi^i u
\end{equation*}
for all $u\in C^{s,\lambda} _{E^{i}}(\mathcal{X})$ or $u \in 
C^{2s+k,\lambda,s,\frac{\lambda}{2}} _{E^{i}}(\mathcal{X}_T) \cap \mathcal{D}_{A^i}$.

Finally, we recall that $\pi^i u \in \mathcal{S}_{(A^{i-1})^*}$ and then, 
by \eqref{eq.Hodge.0}, 
\begin{equation*}
(\pi^{i} u, u)_{i} = 
( \pi^i u , \pi^i u + A^{i-1} (A^{i-1})^*\varphi ^{i})
_{i}=
\end{equation*}
\begin{equation*}
= (\pi^{i} u, \pi^i u)_{i} = 
(u,\pi^{i} u)_{i},
\end{equation*} 
\begin{equation*}
(\pi^{i} u, (I-\pi^i)u)_{i} = 
(\pi^{i} u,u)_{i} - (\pi^i u, \pi^i u)_{i}
=0
\end{equation*}
 for all $u\in C^{s,\lambda} _{E^{i}}(\mathcal{X})$ or $u \in 
C^{2s+k,\lambda,s,\frac{\lambda}{2}} _{E^{i}}(\mathcal{X}_T) \cap \mathcal{D}_{A^i}$. 
\end{proof}

We finish the section with two important lemmata specifying the action 
of the non-linear operator ${\mathcal N}^i$ and its linearisations.

\begin{lemma} \label{l.N.cont}
Let $s,k\in \mathbb N$,  $0<\lambda<1$.  
Bilinear forms \eqref{eq.nonlinear} induce continuous non-linear operators
\begin{equation} \label{eq.N.bound}
{\mathcal N}^i : 
C^{2s+k,\lambda,s,\frac{\lambda}{2}} _{E^i} ({\mathcal X}_T)\cap
\mathcal{D}_{A^{i}} \to 
C^{2s+k-1,\lambda,s,\frac{\lambda}{2}} _{E^i} ({\mathcal X}_T)\cap
\mathcal{D}_{A^{i}},
\end{equation}
\begin{equation} \label{eq.Ni.t}
\pi^i {\mathcal N}^i : C^{2s+k,\lambda,s,\frac{\lambda}{2}} _{E^i} ({\mathcal X}_T) \cap
\mathcal{D}_{A^{i}} \to 
C^{2s+k-1,\lambda,s,\frac{\lambda}{2}} _{E^i} ({\mathcal X}_T) \cap \mathcal{D}_{A^{i}}.
\end{equation}
\end{lemma}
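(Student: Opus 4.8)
The plan is to prove the continuity of \eqref{eq.N.bound} first, since \eqref{eq.Ni.t} then follows immediately by composing with the projection. Recall from \eqref{eq.nonlinear} that
\[
{\mathcal N}^i (v) = {\mathcal M}_{i,1} (A^i v, v) + A^{i-1} {\mathcal M}_{i,2} (v,v),
\]
so the strategy is to decompose ${\mathcal N}^i$ into these two terms and track the regularity of each through the scale of anisotropic H\"older spaces, using the mapping properties already established.

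\emph{First I would treat the term ${\mathcal M}_{i,1}(A^i v, v)$.} If $v \in C^{2s+k,\lambda,s,\frac{\lambda}{2}}_{E^i}({\mathcal X}_T) \cap {\mathcal D}_{A^i}$, then by the very definition of the domain ${\mathcal D}_{A^i}$ we have $A^i v \in C^{2s+k,\lambda,s,\frac{\lambda}{2}}_{E^{i+1}}({\mathcal X}_T)$. Hence both $A^i v$ and $v$ lie in spaces of the form $C^{2s+k,\lambda,s,\frac{\lambda}{2}}_{E^\cdot}({\mathcal X}_T)$ with the same indices, and Lemma \ref{l.product} (applied with $j=1$) gives that ${\mathcal M}_{i,1}(A^i v, v)$ lies in $C^{2s+k,\lambda,s,\frac{\lambda}{2}}_{E^i}({\mathcal X}_T)$, which embeds continuously into $C^{2s+k-1,\lambda,s,\frac{\lambda}{2}}_{E^i}({\mathcal X}_T)$ by Theorem \ref{t.emb.hoelder.t}. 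For the second term, $v \in C^{2s+k,\lambda,s,\frac{\lambda}{2}}_{E^i}({\mathcal X}_T)$ gives ${\mathcal M}_{i,2}(v,v) \in C^{2s+k,\lambda,s,\frac{\lambda}{2}}_{E^{i-1}}({\mathcal X}_T)$ by Lemma \ref{l.product} (with $j=2$), and then the first-order differential operator $A^{i-1}$ maps this continuously into $C^{2s+k-1,\lambda,s,\frac{\lambda}{2}}_{E^i}({\mathcal X}_T)$ by Lemma \ref{l.diff.oper}(1). Combining, ${\mathcal N}^i(v) \in C^{2s+k-1,\lambda,s,\frac{\lambda}{2}}_{E^i}({\mathcal X}_T)$ with the appropriate quadratic bound in $\|v\|$.

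\emph{Next I must verify membership in ${\mathcal D}_{A^i}$, i.e.\ that $A^i {\mathcal N}^i(v)$ enjoys the same regularity.} Here the key observation is that $A^i A^{i-1} \equiv 0$ annihilates the second term of ${\mathcal N}^i$ entirely, so $A^i {\mathcal N}^i(v) = A^i {\mathcal M}_{i,1}(A^i v, v)$. This is again a first-order operator applied to the $C^{2s+k,\lambda,s,\frac{\lambda}{2}}_{E^i}({\mathcal X}_T)$-section ${\mathcal M}_{i,1}(A^i v, v)$, so Lemma \ref{l.diff.oper}(1) places it in $C^{2s+k-1,\lambda,s,\frac{\lambda}{2}}_{E^{i+1}}({\mathcal X}_T)$ with the required estimate; hence ${\mathcal N}^i(v) \in C^{2s+k-1,\lambda,s,\frac{\lambda}{2}}_{E^i}({\mathcal X}_T) \cap {\mathcal D}_{A^i}$ and \eqref{eq.N.bound} is a bounded (quadratic, hence continuous) map, with continuity following from a polarisation identity exactly as in \eqref{eq.M.diff}. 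Finally, for \eqref{eq.Ni.t} I would compose with the Leray--Helmholtz projection $\pi^i$: by Lemma \ref{l.Helmholtz}, operator \eqref{eq.proj.cont.t} maps $C^{2s+k-1,\lambda,s,\frac{\lambda}{2}}_{E^i}({\mathcal X}_T)\cap{\mathcal D}_{A^i}$ continuously into $C^{2s+k-1,\lambda,s,\frac{\lambda}{2}}_{E^i}({\mathcal X}_T)\cap{\mathcal D}_{A^i}\cap{\mathcal S}_{(A^{i-1})^*}$, so $\pi^i {\mathcal N}^i$ is continuous as claimed. The main obstacle is the bookkeeping around the domain spaces ${\mathcal D}_{A^i}$: one must be careful that the quadratic nonlinearity does not degrade regularity below $2s+k-1$ and that the vanishing $A^i A^{i-1}\equiv 0$ is invoked at precisely the right point so that no extra derivative falls on the $A^{i-1}p$-type term — everything else is a routine application of Lemmata \ref{l.diff.oper}, \ref{l.product}, \ref{l.Helmholtz} and the embedding Theorem \ref{t.emb.hoelder.t}.
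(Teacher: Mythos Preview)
Your proof is correct and follows essentially the same approach as the paper's: both decompose $\mathcal{N}^i$ into its two summands, invoke Lemma~\ref{l.product} for the bilinear forms, use $A^i \circ A^{i-1} \equiv 0$ to handle the $\mathcal{D}_{A^i}$ membership, and rely on \eqref{eq.M.diff} for continuity. The only cosmetic difference is that the paper first observes $\pi^i A^{i-1} = 0$ (via \eqref{eq.orthog.range}) to reduce $\pi^i \mathcal{N}^i(u)$ to $\pi^i \mathcal{M}_{i,1}(A^i u, u)$ and then computes $A^i \pi^i \mathcal{N}^i$ explicitly using $\pi^i = \varPhi_i A^i + \Pi^i$ from \eqref{eq.pi.Dom}, whereas you simply compose the already-established map \eqref{eq.N.bound} with the projection \eqref{eq.proj.cont.t} from Lemma~\ref{l.Helmholtz}; both routes are equivalent.
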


\begin{proof} By the definition of $\pi^i$, we see that 
\begin{equation} \label{eq.orthog.range}
(A^{i-1}v, \pi^i u)_{i} = 
(v, (A^{i-1})^*\pi^i u)_{i-1} =0
\end{equation}
for all $v \in C^{1,\lambda,0,0} _{E^{i-1}} ({\mathcal X}_T)$, 
$u \in C^{1,\lambda,0,0} _{E^i} ({\mathcal X}_T)$, i.e. 
\begin{equation} \label{eq.pi.Ni}
\pi^i {\mathcal N}^i  (u ) = \pi^i
{\mathcal M}_{i,1} (A^{i} u,u) 
\end{equation}
for all $u\in C^{2s+k,\lambda,s,\frac{\lambda}{2}} _{E^i} ({\mathcal X}_T)$. 
Hence Lemma \ref{l.product} and formulas \eqref{eq.property.M1}, \eqref{eq.M.diff} 
imply that operators 
${\mathcal N}^i$ and $\pi^i {\mathcal N}^i$ map $C^{2s+k,\lambda,s,\frac{\lambda}{2}} 
_{E^i} ({\mathcal X}_T) \cap \mathcal{D}_{A^{i}}$ continuously to 
$C^{2s+k-1,\lambda,s,\frac{\lambda}{2}} _{E^i} ({\mathcal X}_T)$. 

Finally, as $A^{i} \circ A^{i-1} \equiv 0$, then, according to Lemmata \ref{l.product}, 
\ref{l.Phi.t} and \eqref{eq.pi.Dom},
\begin{equation*}
A^i {\mathcal N}^i  (u ) = A^i {\mathcal M}_{i,1} (A^{i} u,u) \in 
C^{2s+k-1,\lambda,s,\frac{\lambda}{2}} _{E^i} ({\mathcal X}_T)
\cap \mathcal{D}_{A^{i+1}}, 
\end{equation*}
\begin{equation*}
A^i \pi^i {\mathcal N}^i  (u ) = A^i \varPhi_i A^i {\mathcal M}_{i,1} (A^{i} u,u) \in 
C^{2s+k-1,\lambda,s,\frac{\lambda}{2}} _{E^i} ({\mathcal X}_T)
\end{equation*}
if $u \in C^{2s+k,\lambda,s,\frac{\lambda}{2}} 
_{E^i} ({\mathcal X}_T) \cap \mathcal{D}_{A^{i}}$.  
Thus, the continuity of operators \eqref{eq.N.bound}, \eqref{eq.Ni.t} follows again from 
Lemma \ref{l.product} and formulas \eqref{eq.property.M1}, \eqref{eq.M.diff}. 
\end{proof}

\begin{lemma} \label{l.N.der}
Let $s,k\in \mathbb N$,  $0<\lambda<1$. The Fr\'echet derivatives $({\mathcal N}^i)'
_{|u^{(0)}}$ and $(\pi^i {\mathcal N}^i)'_{|u^{(0)}}$ of continuous operators  
\eqref{eq.N.bound} and \eqref{eq.Ni.t} 
at the point $u^{(0)} \in C^{2s+k,\lambda,s,\frac{\lambda}{2}} _{E^i} ({\mathcal X}_T) 
\cap \mathcal{D}_{A^{i}} $  equal to 
\begin{equation*}
({\mathcal N}^i)'_{|u^{(0)}} v = 
 {\mathcal M}_{i,1} (A^{i}v,u^{(0)}) + {\mathcal M}_{i,1} (A^{i} u^{(0)},v) +
A^{i-1}  \Big( {\mathcal M}_{i,2} ( v,u^{(0)}) + {\mathcal M}_{i,2} ( u^{(0)},v)\Big), 
\end{equation*}
\begin{equation*}
(\pi^i {\mathcal N}^i)'_{|u^{(0)}} v = 
\pi^i \Big( {\mathcal M}_{i,1} (A^{i}v,u^{(1)}) + {\mathcal M}_{i,1} (A^{i} u^{(1)},v) \Big), 
\end{equation*}
respectively; these are the bounded linear operators
\begin{equation} \label{eq.der.N.Dom}
({\mathcal N}^i)'_{|u^{(0)}} : C^{2s+k,\lambda,s,\frac{\lambda}{2}} _{E^i} ({\mathcal X}_T) 
\cap \mathcal{D}_{A^{i}} \to 
C^{2s+k-1,\lambda,s,\frac{\lambda}{2}} _{E^i} ({\mathcal X}_T) 
\cap \mathcal{D}_{A^{i}},
\end{equation}
\begin{equation*} 
\pi ^i ({\mathcal N}^i)'_{|u^{(0)}} : 
C^{2s+k,\lambda,s,\frac{\lambda}{2}} _{E^i} ({\mathcal X}_T) \cap
\mathcal{D}_{A^{i}} \to 
C^{2s+k-1,\lambda,s,\frac{\lambda}{2}} _{E^i} ({\mathcal X}_T) \cap \mathcal{D}_{A^{i}}. 
\end{equation*}
\end{lemma}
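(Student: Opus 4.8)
The plan is to exploit that ${\mathcal N}^i$ is a homogeneous quadratic map, so that the increment of ${\mathcal N}^i$ at a point $u^{(0)}$ splits \emph{exactly} into a part linear in the perturbation and the value of ${\mathcal N}^i$ on the perturbation itself, with no higher order corrections.

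First I would fix $u^{(0)} \in C^{2s+k,\lambda,s,\frac{\lambda}{2}}_{E^i} ({\mathcal X}_T) \cap {\mathcal D}_{A^i}$ and, using the bilinearity of the forms ${\mathcal M}_{i,1,x}$, ${\mathcal M}_{i,2,x}$ together with the linearity of $A^i$ and $A^{i-1}$ --- exactly as in \eqref{eq.M.diff} --- expand, pointwise on ${\mathcal X}_T$,
\[
{\mathcal N}^i (u^{(0)}+v) - {\mathcal N}^i (u^{(0)}) = ({\mathcal N}^i)'_{|u^{(0)}} v + {\mathcal N}^i (v),
\]
where $({\mathcal N}^i)'_{|u^{(0)}} v$ collects the four cross terms and is precisely the first expression displayed in the statement. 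For the projected operator I would instead start from the identity \eqref{eq.pi.Ni}, namely $\pi^i {\mathcal N}^i (u) = \pi^i {\mathcal M}_{i,1}(A^i u, u)$, which already discards the $A^{i-1}$-part by \eqref{eq.orthog.range}; applying the same bilinear expansion to ${\mathcal M}_{i,1}(A^i u,u)$ and using the linearity of $\pi^i$ gives $(\pi^i {\mathcal N}^i)'_{|u^{(0)}} v = \pi^i \big( {\mathcal M}_{i,1}(A^i v, u^{(0)}) + {\mathcal M}_{i,1}(A^i u^{(0)}, v)\big)$, which is the second displayed formula.

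Next I would verify that these two maps are bounded linear operators between the indicated graph-norm spaces. Linearity in $v$ is immediate from the formulas; boundedness is a transcription of the computation already carried out in the proof of Lemma \ref{l.N.cont}, now with $u^{(0)}$ frozen as one of the two factors. Concretely, Lemma \ref{l.product} estimates the bilinear terms, Lemma \ref{l.diff.oper} accounts for the single derivative lost through $A^i$ and $A^{i-1}$, and $A^i \circ A^{i-1} \equiv 0$ together with the defining property $A^i u^{(0)} \in C^{2s+k,\lambda,s,\frac{\lambda}{2}}_{E^{i+1}} ({\mathcal X}_T)$ of ${\mathcal D}_{A^i}$ shows that $A^i ({\mathcal N}^i)'_{|u^{(0)}} v = A^i \big( {\mathcal M}_{i,1}(A^i v, u^{(0)}) + {\mathcal M}_{i,1}(A^i u^{(0)}, v)\big)$ again belongs to $C^{2s+k-1,\lambda,s,\frac{\lambda}{2}}_{E^i} ({\mathcal X}_T)$, so that $({\mathcal N}^i)'_{|u^{(0)}}$ indeed maps into ${\mathcal D}_{A^i}$. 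For the $\pi^i$-version one uses in addition the representation $\pi^i = \varPhi_i A^i + \Pi^i$ on ${\mathcal D}_{A^i}$ (formula \eqref{eq.pi.Dom}), the identity $A^i \Pi^i = 0$ from \eqref{eq.Hodge.0}, and the continuity of $\varPhi_i$ from Lemma \ref{l.Phi.t}.

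Finally, to identify these operators as the Fr\'echet derivatives it remains to control the remainder, which by the splitting above equals ${\mathcal N}^i (v)$ (respectively $\pi^i {\mathcal N}^i (v)$). Applying Lemma \ref{l.product} both to ${\mathcal N}^i (v) = {\mathcal M}_{i,1}(A^i v, v) + A^{i-1} {\mathcal M}_{i,2}(v,v)$ and to $A^i {\mathcal N}^i (v) = A^i {\mathcal M}_{i,1}(A^i v, v)$, and recalling that $\|A^i v\|$ enters the graph norm of $v$, yields the quadratic bound $\|{\mathcal N}^i (v)\| \leq c\, \|v\|^2$ in the graph norms of target and domain; hence the remainder is $o(\|v\|)$ as $v \to 0$, which is exactly Fr\'echet differentiability, and composing with the bounded operator $\pi^i$ disposes of $\pi^i {\mathcal N}^i$ as well. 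I expect the only mildly delicate point to be checking this remainder estimate in the graph norm rather than in the ambient anisotropic H\"older norm, but this becomes immediate once Lemma \ref{l.product} is applied to $A^i {\mathcal N}^i$ too; all the remaining steps simply repeat the bookkeeping of Lemma \ref{l.N.cont}.
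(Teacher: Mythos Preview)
Your proposal is correct and follows essentially the same route as the paper: both expand the increment via the bilinear identity \eqref{eq.M.diff}, identify the cross terms as the derivative, use Lemmata \ref{l.product}, \ref{l.Phi.t}, \ref{l.N.cont} together with $A^i\circ A^{i-1}\equiv 0$ and \eqref{eq.pi.Dom} for the mapping properties, and bound the quadratic remainder by Lemma \ref{l.product}. Your write-up is in fact slightly more explicit than the paper's about why the remainder is $o(\|v\|)$ in the graph norm.
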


\begin{proof} It follows from \eqref{eq.M.diff} that 
\begin{equation*}
{\mathcal N}^i  (u ) - {\mathcal N}^i (u^{(0)}) = 
\end{equation*} 
\begin{equation*}
{\mathcal M}_{i,1} (A^{i} (u-u^{(0)}),u^{(0)}) + 
{\mathcal M}_{i,1} (A^{i} u^{(0)},u-u^{(0)}) + {\mathcal M}_{i,1} (A^{i} 
(u-u^{(0)}),u-u^{(0)})+
\end{equation*} 
\begin{equation*}
A^{i-1}  \Big( {\mathcal M}_{i,2} ( u-u^{(0)},u^{(0)}) + 
{\mathcal M}_{i,2} ( u^{(0)},u-u^{(0)}) + {\mathcal M}_{i,2} (u-u^{(0)},u-u^{(0)})\Big)
\end{equation*} 
if $u,u^{(0)} \in C^{2s+k,\lambda,s,\frac{\lambda}{2}} _{E^i} ({\mathcal X}_T) \cap
 \mathcal{D}_{A^{i}}$. 
Then the statement follows from Lemmata \ref{l.product}, \ref{l.Phi.t} and \ref{l.N.cont} 
because ${\mathcal M}_{i,j}$ are bilinear forms, 
\begin{equation*}
A^i \Big( {\mathcal N}^i  (u ) - {\mathcal N}^i (u^{(0)}) \Big)= 
\end{equation*}
\begin{equation*}
A^i \Big( {\mathcal M}_{i,1} (A^{i} (u-u^{(0)}),u^{(0)}) + 
{\mathcal M}_{i,1} (A^{i} u^{(0)},u-u^{(0)}) + {\mathcal M}_{i,1} (A^{i} 
(u-u^{(0)}),u-u^{(0)})\Big)
\end{equation*} 
and, according to formula \eqref{eq.pi.Dom} and Lemmata \ref{l.product}, \ref{l.Phi.t}, 
\begin{equation*}
A^i \Big( {\mathcal M}_{i,1} (A^{i}v,u^{(0)}) + {\mathcal M}_{i,1} (A^{i} u^{(0)},v) \Big)
\in C^{2s+k-1,\lambda,s,\frac{\lambda}{2}} _{E^i} ({\mathcal X}_T) 
\cap \mathcal{D}_{A^{i+1}}
\end{equation*}
\begin{equation*}
A^i \pi^i\Big( {\mathcal M}_{i,1} (A^{i}v,u^{(0)}) + {\mathcal M}_{i,1} (A^{i} u^{(0)},v) \Big)
=\end{equation*}
\begin{equation*}
A^i \varPhi_i 
A^i\Big( {\mathcal M}_{i,1} (A^{i}v,u^{(0)}) + {\mathcal M}_{i,1} (A^{i} u^{(0)},v) \Big)
\in C^{2s+k-1,\lambda,s,\frac{\lambda}{2}} _{E^i} ({\mathcal X}_T) 
\cap \mathcal{D}_{A^{i+1}}
\end{equation*}
if $v,u,u^{(0)} \in C^{2s+k,\lambda,s,\frac{\lambda}{2}} _{E^i} ({\mathcal X}_T) \cap
 \mathcal{D}_{A^{i}}$.   
\end{proof}

\section{Parabolic operators over the H\"older spaces}
\label{s.thoitwHs}

As usual, we denote by $\gamma_{t_0} u$ the restriction of a continuous function $u$ in the 
layer $\mathcal{X}_T $ to the set $\{ t = t_0 \}$ in $\mathcal{X}_T$, where
   $t_0 \in [0,T]$. The following lemma is obvious.

\begin{lemma}
\label{l.bound.trace.holder}
Let $s, k \in \mathbb{Z}_{+}$ and $\lambda \in [0,1)$.
The restriction $\gamma_0$ induces a bounded linear operator
$   \gamma_0 :\, C^{2s+k,\lambda,s,\frac{\lambda}{2}}_{E^i} (\mathcal{X}_T)
            \to C^{2s+k,\lambda}_{E^i} (\mathcal{X})$.
\end{lemma}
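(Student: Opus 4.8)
The plan is to unwind the definitions of the anisotropic H\"older norms and reduce the statement to the elementary fact that evaluation at a fixed time is bounded on each of the (finitely many) seminorms that constitute the target norm. First I would recall that, by definition, a section $u\in C^{2s+k,\lambda,s,\frac{\lambda}{2}}_{E^i}(\mathcal{X}_T)$ has continuous derivatives $\nabla^m_x\partial^j_t u$ for $m+2j\le 2s+k$ (more precisely $\nabla^l_x$ of something in $C^{2s,\lambda,s,\frac{\lambda}{2}}$), so that for the fixed slice $t=0$ the section $\gamma_0 u = u(\cdot,0)$ has continuous spatial derivatives $\nabla^m_x u(\cdot,0)$ up to order $2s+k$; hence $\gamma_0 u$ is at least a candidate element of $C^{2s+k,\lambda}_{E^i}(\mathcal{X})$, and it only remains to estimate its norm.

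Next I would carry out the norm estimate term by term. For the sup part, for every $m\le 2s+k$,
\begin{equation*}
\sup_{x\in\mathcal{X}}|\nabla^m_x u(x,0)|
\le \sup_{t\in[0,T]}\sup_{x\in\mathcal{X}}|\nabla^m_x u(x,t)|
\le \|u\|_{C^{2s+k,\lambda,s,\frac{\lambda}{2}}_{E^i}(\mathcal{X}_T)},
\end{equation*}
since the right-hand norm controls, in particular, $\sup_{t}\|\nabla^m_x u(\cdot,t)\|_{C^{0,0}_{E^i}(\mathcal{X})}$ for those $m$ with $m\le 2s$ directly and for $2s<m\le 2s+k$ through the extra $\nabla^l_x$ layers in the definition of $C^{2s+k,\lambda,s,\frac{\lambda}{2}}$. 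For the H\"older (in $x$) seminorm, for each such $m$ and each pair $x\ne y$ with $d(x,y)\le d_0$,
\begin{equation*}
\frac{|\nabla^m_x u(x,0)-\nabla^m_x u(y,0)|}{d^{\lambda}(x,y)}
\le \sup_{t\in[0,T]}\langle \nabla^m_x u(\cdot,t)\rangle_{\lambda,\mathcal{X},E^i}
\le \|u\|_{C^{2s+k,\lambda,s,\frac{\lambda}{2}}_{E^i}(\mathcal{X}_T)}.
\end{equation*}
Summing these finitely many contributions with a fixed combinatorial constant depending only on $s$, $k$, $n$ gives
$\|\gamma_0 u\|_{C^{2s+k,\lambda}_{E^i}(\mathcal{X})}\le C\,\|u\|_{C^{2s+k,\lambda,s,\frac{\lambda}{2}}_{E^i}(\mathcal{X}_T)}$,
which is the asserted boundedness; linearity of $\gamma_0$ is immediate.

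There is essentially no obstacle here: the only mild point of care is bookkeeping in the spaces $C^{2s+k,\lambda,s,\gamma}$ with $k>0$, where the defining norm does not literally list all $\nabla^m_x u$ with $m\le 2s+k$ but rather asks $\nabla^l_x u\in C^{2s,\lambda,s,\gamma}$ for $l\le k$; one simply applies the displayed slice estimates to each $\nabla^l_x u$ in place of $u$ and re-sums. (One could also invoke Lemma \ref{l.diff.oper}(1) with $A=\nabla^l_x$ together with the $k=0$ case, but spelling it out directly is shorter.) Since the statement is flagged as ``obvious'' in the text, a proof of a line or two is all that is expected; I would simply present the two displayed estimates above and note that they sum to the claim.
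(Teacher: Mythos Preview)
Your proposal is correct and is exactly the kind of direct norm-by-norm verification one expects here; the paper itself offers no proof at all, simply declaring the lemma ``obvious.'' Your brief unwinding of the definitions (including the bookkeeping remark for $k>0$) is therefore entirely in line with the paper's treatment, just made explicit.
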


Consider the following Cauchy 
problem: given a section  $u _0$ of the bundle $E^i$ over $\mathcal{X}$,
find a section $u$ of the induced bundle $E^i (t)$ over $\mathcal{X}_T $, such that
\begin{equation}
\label{pr.Cauchy.heat.homo}
\left\{ \begin{array}{rclll}
        L_\mu ^i u (x,t)
        & =
        & 0 
        & \mbox{for}
        & (x,t) \in \mathcal{X} \times (0,T),
\\
          (\gamma_0 u) (x)
        & =
        & u_0 (x)
        & \mbox{for}
        & x \in \mathcal{X},
        \end{array}
\right.
\end{equation}
where $L^i_{\mu} = \partial_ t + \mu \Delta ^i$ 
is an evolution operator on semiaxis $t>0$ with  $\mu>0$. 

If $\mu >0$ then it is easily seen that the endomorphism of $E^i_x$ given by the 
principal symbol
   $- \mu \left( \sigma^2 (\Delta^i) (x,\xi) \right)$
of the operator $L^i_{\mu}$ at any nonzero vector
$\xi $ of the cotangent bundle $T^\ast_x \mathcal{X}$ has only real eigenvalues strictly less 
than zero. Therefore, the operator $L^i_{\mu}$  
is actually one of the well-known type of
partial differential operators, called parabolic, which enjoy a behaviour essentially
like that of the classical equation of the heat conduction.
For instance, one has as a smoothing operator the fundamental solution
$   t \mapsto \psi^i_{\mu} (x,y,t) \in C^{\infty} (\mathcal{X} \times \mathcal{X}, 
E^i \otimes E^i{}^\ast)$
which generates the solution of the Cauchy problem \eqref{pr.Cauchy.heat.homo} 
for any $u_0 \in C^{\infty}_{E^i} (\mathcal{X})$,
\begin{equation*}
   (\varPsi^{(i,in)} _{\mu} u_0) (x,t)
 = \int_{\mathcal{X}} (u_0, \ast^{-1} \psi^i_{\mu} (x,\cdot,t))_{y} dy.
\end{equation*}
The fundamental solution $\varPsi^i_{\mu}$ is here unique by virtue of 
the compactness of $\mathcal{X}$.

We recall briefly the Hilbert-Levi procedure for constructing the fundamental solutions
for parabolic operators, and state certain basic estimate for them, only to the extent
which we shall need later.
For the details we refer the reader to \cite{Eide56}, \cite{Eide90} and elsewhere.

We can cover $\mathcal{X}$ by a finite number of coordinate patches $U$ such that over each $U$
the bundle $E^i$ is trivial, i.e., the restriction $E^i \restriction_U$ is isomorphic to
the trivial bundle $U \times \mathbb{C}^{k_i}$, where $k_i$ is the fibre dimension of
$E^i$. Let $x = (x^1, \ldots, x^n)$ be local coordinates in $U$.
Then any section of $E^i$ over $U$ can be regarded as a $k_i$-column of 
complex-valued functions of coordinates $x$.
Under this identification, for any $u \in C^\infty_{E^i} (U)$, 
the $k_i\,$-column $\Delta^i u$
of functions in $U$ is represented by
\begin{equation*}
   \Delta^i  u
 =  \sum_{|\alpha| \leq 2} \varDelta_{\alpha}^i (x)\, \partial^{\alpha}  u,
\end{equation*}
where
   $\Delta_{\alpha}^i (x)$ are $(k_i \times k_i)\,$-matrices of $C^\infty$ functions of $x$,
   by $\partial^{\alpha}$ is meant the derivative
   $(\partial/\partial x^1)^{\alpha_1} \ldots (\partial/\partial x^n)^{\alpha_n}$.
So, the principal symbol of $\Delta^i$ is given by
\begin{equation*}
   \sigma^{2} (\Delta^i) (x,\xi)
 = - \sum_{|\alpha| = 2} \Delta_{\alpha}^i (x)\, \xi^{\alpha} 
\end{equation*}
for $(x,\xi) \in U \times {\mathbb R}^n$, where
   $\xi^{\alpha} = \xi_1^{\alpha_1} \ldots \xi_n^{\alpha_n}$.
As already mentioned, the map $\sigma^{2} (\Delta^i) (x,\xi)$ has only strictly 
negative eigenvalues for each nonzero vector $\xi$ and hence the following matrix 
is well defined for all $x, y \in U$ and $t > 0$:
\begin{equation}
\label{eq.paramet}
   P^i_{U} (x,y,t) = \frac{1}{(2 \pi)^n}
  \int \exp \left( t\, \mu \sigma^{2} (\Delta^i) (x,\xi) + 
\sqrt{-1}\, \langle x-y, \xi \rangle\, I \right) d \xi;
\end{equation}
here $I$ stands for the unity matrix of type $k_i \times k_i$.

It will be seen below that $p^i_U$ describes the principal part of singularity for the fundamental
solution of the parabolic operator $L^i_{\mu}$ in $U$.
For that reason $p^i_U$ is said to be a local parametrix for this operator.
Letting now
\begin{equation*}
   \sum \phi_U^2 \equiv 1
\end{equation*}
be a quadratic partition of unity on $\mathcal{X}$ subordinate to a covering $\{\, U\, \}$, 
and considering the expression $\phi_U (x) P^i_{U} (x,y,t) \phi_U (y)$
as a section of $E^i \otimes (E^i)^\ast$ with support in $U \times U$, we define a global 
parametrix for $L^i_\mu$  by
\begin{equation*}
   P^i (x,y,t) = \sum_U \phi_U (x) P^i_U (x,y,t) \phi_U (y).
\end{equation*}
The fundamental solution $\psi^i_{\mu} (x,y,t)$ of $L^i_\mu$ will then be given as the unique 
solution of the integral equation of Volterra type
\begin{equation*}
   \psi^i_{\mu} (x,y,t)
 = P^i (x,y,t)
 - \int_0^t dt' \int_{\mathcal{X}} (L^i_{\mu} P^i(\cdot,y,t'), \ast^{-1} \psi_{\mu} 
(x,\cdot,t-t'))_{z} dz,
\end{equation*}
the operator $L^i_{\mu}$ acting in $z$.
Since the singularities of $P^i$ and $L^i_{\mu} P^i$ are 
relatively weak, we can solve 
the integral equation by the standard method of successive approximation.
The kernel $\psi_{\mu}^i (x,y,t)$ obtained in this way is easily verified to be of 
class $C^\infty$, when $t > 0$, and,
\begin{equation*}
   u (x,t)
 = \int_{\mathcal{X}} (u_0, \ast^{-1} \psi^i _{\mu} (x,\cdot,t))_{y} dy
\end{equation*}
satisfies
   $L^i_{\mu}u = 0$ for $t > 0$
and
   $u (x,0) = u_0 (x)$ for all $x \in \mathcal{X}$.

As for the estimates, note that if we define, relative to a Riemannian metric $ds$ on 
$\mathcal{X}$, a distance function to be
\begin{equation*}
   d (x,y) = \inf \int_{\widehat{xy}} ds
\end{equation*}
for $x, y \in \mathcal{X}$, where $\widehat{xy}$ is a path from $x$ to $y$, then
\begin{equation}
\label{eq.eotfsps}
   |\partial_x^{\alpha} \partial_y^{\beta} \psi_{\mu} (x,y,t)|
 \leq
   c\,
   \frac{1}{t^{\frac{\scriptstyle{n + |\alpha| + |\beta|}}{\scriptstyle{2}}}}\,
   \exp \Big( - c' \frac{(d (x,y))^{2}}{t}\Big)
\end{equation}
locally on $\mathcal{X}'$ and for each entry of the matrix.
Also, as can be inferred and in fact easily proved from \eqref{eq.paramet} and 
\eqref{eq.eotfsps},
we get
\begin{equation*}
   |\psi^i_{\mu} (x,y,t) - P^i_U (x,y,t)|
 \leq
   c\,
   \frac{1}{t^{\frac{\scriptstyle{n-1}}{\scriptstyle{2}}}}\,
   \exp \Big( - c' \frac{(d (x,y))^{2}}{t}\Big)
\end{equation*}
uniformly on each compact subset of $U \times U$, showing that locally $p_U$ yields a first
approximation to the fundamental solution $\psi^i_{\mu} (x,y,t)$.

\begin{lemma}
\label{l.Greenhe}
Suppose that $\mu>0$ and 
$u \in C^{\infty}_{E^i} (\mathcal{X}_T)$. 
Then, for all $(x,t) \in \mathcal{X}_T$, 
\begin{equation}
\label{eq.Greenhe}
   u (x,t)
 = (\varPsi^{(i,in)}_{\mu} u (\cdot,0)) (x,t)
 + \int_0^t dt' \int_{\mathcal{X}} (L^i_{\mu}u (\cdot,t'), \ast^{-1} 
\psi_{\mu}^i (x,\cdot,t-t'))_{y} dy.
\end{equation}
\end{lemma}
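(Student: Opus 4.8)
The plan is to deduce \eqref{eq.Greenhe} from the classical Duhamel (Green) identity for the parabolic operator $L^i_\mu$, using the fundamental solution $\psi^i_\mu$ constructed above. Fix $(x,t)\in\mathcal{X}_T$ with $t>0$ (for $t=0$ both sides reduce to $u(x,0)$), and for $0<t'<t$ put
\begin{equation*}
F(t')=\int_{\mathcal{X}}\big(u(\cdot,t'),\ast^{-1}\psi^i_\mu(x,\cdot,t-t')\big)_y\,dy
=\big(\varPsi^{(i,in)}_\mu u(\cdot,t')\big)(x,t-t').
\end{equation*}
Since $u\in C^\infty_{E^i}(\mathcal{X}_T)$ and, by the Hilbert--Levi construction recalled above, $\psi^i_\mu(x,\cdot,s)$ is $C^\infty$ in $(y,s)$ for $s>0$, the integrand is smooth on $\mathcal{X}\times[\varepsilon,t-\varepsilon]$ for each small $\varepsilon>0$, and compactness of $\mathcal{X}$ permits differentiation under the integral sign. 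I will show that (i) $F$ is $C^1$ on $(0,t)$ with
\begin{equation*}
F'(t')=\int_{\mathcal{X}}\big(L^i_\mu u(\cdot,t'),\ast^{-1}\psi^i_\mu(x,\cdot,t-t')\big)_y\,dy,
\end{equation*}
(ii) $F(t')\to\big(\varPsi^{(i,in)}_\mu u(\cdot,0)\big)(x,t)$ as $t'\to0^+$, and (iii) $F(t')\to u(x,t)$ as $t'\to t^-$. Granting these, integrating $F'$ over $(\varepsilon,t-\varepsilon)$ and letting $\varepsilon\to0$ gives \eqref{eq.Greenhe}; the passage to the limit is legitimate because $\int_{\mathcal{X}}|\psi^i_\mu(x,y,s)|\,dy$ stays bounded as $s\to0^+$ by the Gaussian bound \eqref{eq.eotfsps}, so $F'$ is bounded near the endpoints of $(0,t)$.

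For (i), write $v(y)=\ast^{-1}\psi^i_\mu(x,y,t-t')$ for fixed $t'\in(0,t)$; differentiating under the integral sign,
\begin{equation*}
F'(t')=\int_{\mathcal{X}}\big(\partial_{t'}u(\cdot,t'),v\big)_y\,dy-\int_{\mathcal{X}}\big(u(\cdot,t'),\ast^{-1}(\partial_s\psi^i_\mu)(x,\cdot,t-t')\big)_y\,dy.
\end{equation*}
Two structural facts about the parabolic fundamental solution enter here. First, because $\Delta^i=(A^i)^*A^i+A^{i-1}(A^{i-1})^*$ is formally self-adjoint and $\mathcal{X}$ is closed, Green's formula gives $\int_{\mathcal{X}}(\Delta^i u(\cdot,t'),v)_y\,dy=\int_{\mathcal{X}}(u(\cdot,t'),\Delta^i v)_y\,dy$ with no boundary contribution. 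Second, the self-adjointness of $\Delta^i$ together with uniqueness of the fundamental solution yields that $y\mapsto\ast^{-1}\psi^i_\mu(x,y,s)$ itself solves $\partial_s w+\mu\Delta^i w=0$ for $s>0$, i.e. $\ast^{-1}(\partial_s\psi^i_\mu)(x,\cdot,s)=-\mu\Delta^i v$ at $s=t-t'$. Substituting both facts and using Green's formula,
\begin{equation*}
F'(t')=\int_{\mathcal{X}}\big(\partial_{t'}u(\cdot,t'),v\big)_y\,dy+\mu\int_{\mathcal{X}}\big(\Delta^i u(\cdot,t'),v\big)_y\,dy=\int_{\mathcal{X}}\big((\partial_{t'}+\mu\Delta^i)u(\cdot,t'),v\big)_y\,dy,
\end{equation*}
which is the claimed formula; its right-hand side is continuous in $t'$, so indeed $F\in C^1(0,t)$.

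For (ii) the argument $t-t'$ stays bounded away from $0$, so $\psi^i_\mu(x,\cdot,t-t')$ remains smooth and $F(t')\to\int_{\mathcal{X}}(u(\cdot,0),\ast^{-1}\psi^i_\mu(x,\cdot,t))_y\,dy=\big(\varPsi^{(i,in)}_\mu u(\cdot,0)\big)(x,t)$ by dominated convergence. For (iii), $F(t')=\big(\varPsi^{(i,in)}_\mu u(\cdot,t')\big)(x,t-t')$ is the solution of the homogeneous Cauchy problem \eqref{pr.Cauchy.heat.homo} with datum $u(\cdot,t')$, evaluated at time $t-t'\to0^+$; such solutions attain their data as $\tau\to0^+$ (the defining property of $\varPsi^{(i,in)}_\mu$ recalled above, resting on $\psi^i_\mu(x,\cdot,\tau)$ being an approximate identity, again by \eqref{eq.eotfsps}), and since the family $\{u(\cdot,t')\}_{t'\in[0,T]}$ is bounded in $C^{2,0}_{E^i}(\mathcal{X})$ the convergence is uniform in $t'$, while $u(x,\cdot)$ is continuous; hence $F(t')\to u(x,t)$. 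The one genuine point is the symmetry/adjoint property of $\psi^i_\mu$ invoked in (i): since $\Delta^i$ is formally self-adjoint, the semigroup it generates is self-adjoint, so its kernel $\psi^i_\mu$ is symmetric with respect to the Hermitian bundle structures $\ast_i$ and $y\mapsto\ast^{-1}\psi^i_\mu(x,y,s)$ solves the same parabolic equation in the spatial variable $y$; for this standard fact we refer to \cite{Eide56}, \cite{Eide90}. Everything else is routine.
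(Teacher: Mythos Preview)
Your proof is correct, but it follows a different route from the paper's. The paper argues by \emph{uniqueness}: it observes that the Cauchy problem \eqref{pr.Cauchy.heat} has a unique smooth solution, that the first term on the right of \eqref{eq.Greenhe} solves it with $f=0$ and initial datum $u(\cdot,0)$, and then checks directly (via the Leibniz rule for the $t$-integral with variable upper limit) that the second term solves it with $f=L^i_\mu u$ and zero initial datum; hence the sum of the two terms and $u$ are solutions of the same Cauchy problem and therefore coincide. Your argument instead fixes $(x,t)$, introduces $F(t')=(\varPsi^{(i,in)}_\mu u(\cdot,t'))(x,t-t')$, computes $F'(t')$ by moving $\partial_{t'}$ and $\Delta^i$ onto $u$ via integration by parts, and integrates from $0$ to $t$. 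The trade-off is that you must invoke the \emph{symmetry} of the heat kernel (equivalently, that $y\mapsto\ast^{-1}\psi^i_\mu(x,y,s)$ solves the parabolic equation in $y$), which is true by self-adjointness of $\Delta^i$ but is an extra ingredient; the paper's route needs only the already-stated property that $\varPsi^{(i,in)}_\mu$ produces solutions of the homogeneous problem with the prescribed initial value, plus uniqueness. Your approach is the classical Green/Duhamel derivation and is perhaps more self-contained once the kernel symmetry is granted, while the paper's approach is shorter given the preceding setup.
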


\begin{proof}
As is well known, the Cauchy problem
\begin{equation}
\label{pr.Cauchy.heat}
\left\{ \begin{array}{rclll}
          L^i_{\mu}\, u (x,t)
        & =
        & f (x,t)
        & \mbox{for}
        & (x,t) \in \mathcal{X} \times (0,T),
\\
          (\gamma_0 u) (x)
        & =
        & u_0 (x)
        & \mbox{for}
        & x \in \mathcal{X}.
        \end{array}
\right.
\end{equation}
has a unique solution in $C^\infty_{E^i} (\mathcal{X}_T)$ for all smooth data $f$
in $\mathcal{X}_T$ and $u_0$ on $\mathcal{X}$.
By the above, the first term on the right-hand side of \eqref{eq.Greenhe} is a solution
of the Cauchy problem with $f = 0$ and $u_0 = u (\cdot,0)$.
Hence, we shall have established the lemma if we prove that the second term is a solution
of the Cauchy problem with $f = L_{\mu}u$ and $u_0 = 0$.
To this end, we rewrite the second summand on the right-hand side of \eqref{eq.Greenhe} in
the form
\begin{equation*}
   \int_0^t \varPsi^{(i,in)}_{\mu} 	\left( L^i_{\mu}u (\cdot,t') \right) (x,t-t')\, dt'
\end{equation*}
for $(x,t) \in \mathcal{X}_T$.
Obviously, the initial value at $t = 0$ of this integral vanishes, and so it remains to
calculate its image by $L_{\mu}$. Thus, we get 
\begin{equation*}
 L^i_{\mu} \int_0^t \varPsi^{(i,in)}_\mu \left( L^i_{\mu}u (\cdot,t') \right) (x,t-t')\, dt' = 
\end{equation*}
\begin{equation*}
   \varPsi_\mu^i \left( L^i_{\mu}u (\cdot,t) \right) (x,0)
 + \int_0^t L^i_{\mu}\, \varPsi^i_\mu \left( L_\mu ^i u (\cdot,t') \right) (x,t-t')\, dt'
 =    L_{\mu}^i u (x,t).
\end{equation*}
as desired.  
\end{proof}

For  $f \in C^{\infty}_{E^i} (\mathcal{X}_T)$, 
$u_0 \in C^{\infty} _{E^i} (\mathcal{X})$ we set
\begin{equation*}
   (\varPsi^{(i,v)} _{\mu} f) (x,t) = 
	 \int_0^t \varPsi^{(i,in)}_{\mu} 
	\left( f (\cdot,t') \right) (x,t-t')\, dt',
\end{equation*}
\begin{equation*}
   (\varPsi^{(i)} _{\mu} (f,u_0)) (x,t) = 
	 (\varPsi^{(i,v)} _{\mu} f) (x,t)  
	+(\varPsi^{(i,in)} _{\mu} u_0) (x,t).  
\end{equation*}
We can extend the action of the operator $\varPsi_
{\mu} ^i$ to the scale $C^{2s+k,\lambda,s, \frac{\lambda}{2}}_{E^i}  (\mathcal{X}_T)$. 

\begin{lemma}
\label{l.unique.heat.initial.hoelder} 
Problem \eqref{pr.Cauchy.heat} has at most one solution in  
$C^{2,\lambda,1,\frac{\lambda}{2}}_{E^i} (\mathcal{X}_T)$.
\end{lemma}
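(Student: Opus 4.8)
The plan is to prove uniqueness for the Cauchy problem \eqref{pr.Cauchy.heat} in the H\"older class $C^{2,\lambda,1,\frac{\lambda}{2}}_{E^i} (\mathcal{X}_T)$ by a standard energy argument, reducing it to the homogeneous problem and exploiting the strong ellipticity of $\Delta^i$. First I would note that if $u_1$ and $u_2$ are two solutions in $C^{2,\lambda,1,\frac{\lambda}{2}}_{E^i} (\mathcal{X}_T)$, then $w = u_1 - u_2$ satisfies $L^i_\mu w = 0$ in $\mathcal{X}\times(0,T)$ and $\gamma_0 w = 0$; moreover $w$ is of class $C^{2,\lambda,1,\frac{\lambda}{2}}_{E^i} (\mathcal{X}_T)$, so $\partial_t w$ and $\Delta^i w$ are continuous up to $t=0$, and in particular $w(\cdot,t)\in C^{2,\lambda}_{E^i}(\mathcal{X})$ for each $t$ with $\|w(\cdot,t)\|$ uniformly bounded. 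Hence all the integrations below over the closed compact manifold $\mathcal{X}$ are legitimate and there are no boundary terms since $\mathcal{X}$ is closed.

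Next I would consider the energy function $\mathcal{E}(t) = \|w(\cdot,t)\|_i^2 = (w(\cdot,t),w(\cdot,t))_i$. Since $w$ has continuous time derivative, $\mathcal{E}$ is differentiable on $[0,T]$ and, using $\partial_t w = -\mu\Delta^i w$,
\begin{equation*}
\frac{d}{dt}\mathcal{E}(t) = 2\,\mathrm{Re}\,(\partial_t w, w)_i = -2\mu\,\mathrm{Re}\,(\Delta^i w, w)_i.
\end{equation*}
Now I invoke the structure of the Laplacian of the elliptic complex: $\Delta^i = (A^i)^*A^i + A^{i-1}(A^{i-1})^*$, so that by the very definition of the formal adjoints,
\begin{equation*}
(\Delta^i w, w)_i = (A^i w, A^i w)_{i+1} + ((A^{i-1})^* w, (A^{i-1})^* w)_{i-1} = \|A^i w\|_{i+1}^2 + \|(A^{i-1})^* w\|_{i-1}^2 \geq 0,
\end{equation*}
all integrations by parts being valid because $\mathcal{X}$ is closed and $w(\cdot,t)$ is $C^2$. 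Therefore $\frac{d}{dt}\mathcal{E}(t) \leq 0$, while $\mathcal{E}(0) = \|\gamma_0 w\|_i^2 = 0$ and $\mathcal{E}(t)\geq 0$, forcing $\mathcal{E}(t)\equiv 0$ on $[0,T]$; hence $w\equiv 0$, i.e. $u_1 = u_2$.

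The only genuinely delicate point is justifying the differentiation under the integral sign in $\frac{d}{dt}\mathcal{E}(t)$ and the integration by parts leading to $(\Delta^i w,w)_i = \|A^i w\|_{i+1}^2 + \|(A^{i-1})^* w\|_{i-1}^2$; both follow from the fact that membership of $w$ in $C^{2,\lambda,1,\frac{\lambda}{2}}_{E^i} (\mathcal{X}_T)$ guarantees that $\nabla^m_x\partial^j_t w$ is continuous on the \emph{closed} layer $\mathcal{X}_T$ for $m+2j\leq 2$, so in particular $w$, $\partial_t w = -\mu\Delta^i w$ and the first-order derivatives $A^i w$, $(A^{i-1})^* w$ are all continuous up to $t=0$ on the compact $\mathcal{X}$. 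An alternative, which I would mention as a remark, is to appeal directly to the uniqueness part of the classical parabolic Cauchy problem theory (cf. \cite{Eide90}) for the operator $L^i_\mu$, which is parabolic by the spectral property of $\sigma^2(\Delta^i)$ noted above; but the self-adjointness and non-negativity of $\Delta^i$ coming from the complex make the direct energy computation both shorter and more transparent.
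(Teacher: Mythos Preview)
Your proof is correct. The energy argument you give is valid: membership in $C^{2,\lambda,1,\frac{\lambda}{2}}_{E^i}(\mathcal{X}_T)$ ensures that $w$, $\partial_t w$, $A^i w$, $(A^{i-1})^*w$ and $\Delta^i w$ are all continuous on the closed layer $\mathcal{X}_T$, so the differentiation of $\mathcal{E}(t)$ and the integration by parts on the closed manifold are fully justified, and the non-negativity $(\Delta^i w,w)_i = \|A^i w\|_{i+1}^2 + \|(A^{i-1})^*w\|_{i-1}^2 \geq 0$ is exactly the point where the Hodge--Laplacian structure is used.

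The paper, by contrast, does not write out any argument at all: it simply refers to the uniqueness theorem for parabolic Cauchy problems in Friedman's book \cite[Ch.~1, \S 9, Theorem~16]{Fri64}, which applies because $L^i_\mu$ is parabolic in the sense recalled just before the lemma. Your route is therefore genuinely different and in fact more informative in this setting: it is self-contained, it makes explicit use of the factorisation $\Delta^i=(A^i)^*A^i+A^{i-1}(A^{i-1})^*$ coming from the complex, and it foreshadows the integration-by-parts technique the paper itself uses later in Theorems~\ref{t.NS.deriv.unique} and~\ref{t.NS.unique}. The citation approach, on the other hand, buys generality (it would work for any parabolic $L^i_\mu$, not just a Hodge Laplacian) at the cost of being a black box.
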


\begin{proof}
Cf. Theorem 16 in \cite[Ch.~1, \S~9]{Fri64}. 
\end{proof}

The solution of the Cauchy problem in  H\"{o}lder spaces is recovered from the data
by means of the Green formula.

\begin{lemma}
\label{l.bound.heat.initial.hoelder}
Assume that   $\mu > 0$.
Then,  for each function $u \in C^{2,\lambda,1,\frac{\lambda}{2}}_{E^i} 
(\mathcal{X}_T)$, it follows that $ u = \varPsi^i_{\mu}\, (L^i_ {\mu} u, \gamma_0 u) $.
\end{lemma}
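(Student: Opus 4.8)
The plan is to reduce the assertion to its smooth analogue, the Green-type formula of Lemma \ref{l.Greenhe}, by invoking the uniqueness of solutions to the Cauchy problem \eqref{pr.Cauchy.heat} on the anisotropic H\"older scale. Concretely, I would show that both $u$ and the section $v:=\varPsi^i_\mu(L^i_\mu u,\gamma_0 u)$ solve one and the same Cauchy problem in $C^{2,\lambda,1,\frac{\lambda}{2}}_{E^i}(\mathcal{X}_T)$, and then appeal to Lemma \ref{l.unique.heat.initial.hoelder} to conclude $v=u$.

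First I would check that the data on the right-hand side are admissible: applying parts (2) and (3) of Lemma \ref{l.diff.oper} with $s=1$, $k=0$, the operator $L^i_\mu=\partial_t+\mu\Delta^i$ sends $C^{2,\lambda,1,\frac{\lambda}{2}}_{E^i}(\mathcal{X}_T)$ continuously into $C^{0,\lambda,0,\frac{\lambda}{2}}_{E^i}(\mathcal{X}_T)$, while Lemma \ref{l.bound.trace.holder} gives $\gamma_0 u\in C^{2,\lambda}_{E^i}(\mathcal{X})$. By the extension of the operator $\varPsi^i_\mu$ to the H\"older scale discussed above, the section $v=\varPsi^i_\mu(L^i_\mu u,\gamma_0 u)$ then belongs to $C^{2,\lambda,1,\frac{\lambda}{2}}_{E^i}(\mathcal{X}_T)$ and, since $\varPsi^i_\mu$ is the solution operator of \eqref{pr.Cauchy.heat} — a property inherited from the smooth case (Lemma \ref{l.Greenhe}) by differentiating the volume and initial-layer potentials under the integral sign and using the Gaussian bounds \eqref{eq.eotfsps} — it satisfies $L^i_\mu v=L^i_\mu u$ in $\mathcal{X}\times(0,T)$ and $\gamma_0 v=\gamma_0 u$ on $\mathcal{X}$. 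Thus $u$ and $v$ are two solutions in $C^{2,\lambda,1,\frac{\lambda}{2}}_{E^i}(\mathcal{X}_T)$ of problem \eqref{pr.Cauchy.heat} with $f=L^i_\mu u$ and $u_0=\gamma_0 u$, and Lemma \ref{l.unique.heat.initial.hoelder} forces $v=u$, i.e. $u=\varPsi^i_\mu(L^i_\mu u,\gamma_0 u)$, as claimed.

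The only genuinely delicate point, I expect, is the justification that $\varPsi^i_\mu(f,u_0)$ lies in $C^{2,\lambda,1,\frac{\lambda}{2}}_{E^i}(\mathcal{X}_T)$ and solves \eqref{pr.Cauchy.heat} when $f$ and $u_0$ are merely H\"older (rather than $C^\infty$) sections; this is the parabolic Schauder theory for the potentials built on the fundamental solution $\psi^i_\mu$. Alternatively, one can sidestep the Schauder estimates: since smooth sections are dense in $C^{2,0,1,0}_{E^i}(\mathcal{X}_T)$, one approximates $u$ by smooth $u_\nu\to u$ in that norm (so that $L^i_\mu u_\nu\to L^i_\mu u$ uniformly on $\mathcal{X}_T$ and $\gamma_0 u_\nu\to\gamma_0 u$ uniformly on $\mathcal{X}$), applies the smooth Green formula \eqref{eq.Greenhe} to each $u_\nu$, and passes to the limit pointwise on $\mathcal{X}_T$, using that the volume and initial-layer heat potentials are bounded on the spaces of continuous sections — a consequence of \eqref{eq.eotfsps} and the uniform bound $\int_{\mathcal{X}}|\psi^i_\mu(x,\cdot,t)|\,dy\le c$ for $t\in(0,T]$. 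Either way, the core of the argument remains the reduction to the uniqueness Lemma \ref{l.unique.heat.initial.hoelder}.
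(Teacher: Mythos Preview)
Your argument is correct and is precisely the standard route to such a Green representation in anisotropic H\"older classes: verify that $v:=\varPsi^i_\mu(L^i_\mu u,\gamma_0 u)$ lies in $C^{2,\lambda,1,\frac{\lambda}{2}}_{E^i}(\mathcal{X}_T)$ and solves the same Cauchy problem as $u$, then invoke the uniqueness Lemma~\ref{l.unique.heat.initial.hoelder}. The approximation variant you sketch (smooth $u_\nu\to u$ in $C^{2,0,1,0}$, apply \eqref{eq.Greenhe}, pass to the limit using the Gaussian bounds \eqref{eq.eotfsps}) is equally valid and has the virtue of not presupposing the Schauder mapping properties of the potentials stated in Lemma~\ref{l.heat.key1}, which in the paper appears \emph{after} the lemma in question.

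As for comparison with the paper: there is essentially nothing to compare. The paper's proof of this lemma is the single line ``See \emph{ibid}.'', i.e.\ a bare citation to Friedman~\cite{Fri64}; the same holds for Lemmas~\ref{l.unique.heat.initial.hoelder}, \ref{l.heat.key1} and Theorem~\ref{t.heat.key2}. Your write-up is therefore more detailed than the paper's own treatment, and your argument is exactly the kind of proof one finds in the cited reference.
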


\begin{proof}
See \em{ibid}.
\end{proof}

\begin{lemma}
\label{l.heat.key1}
Let $\mu>0$, $s$ be a positive integer,
   $k \in \mathbb{Z}_{+}$ and
   $0 < \lambda< 1$.
The parabolic potentials
   $\varPsi_{\mu}^{(i,v)}$ and
   $\varPsi^{(i,in)}_{\mu}$
induce bounded linear operators
\begin{equation*}
\begin{array}{rrcl}
   \varPsi^{(i,v)}_{\mu} :
 & C^{2(s-1)+k,\lambda,s-1,\frac{\lambda}{2}}_{E^i} (\mathcal{X}_T)
 & \to
 & C^{2s+k,\lambda,s,\frac{\lambda}{2}}_{E^i} (\mathcal{X}_T),
\\
   \varPsi^{(i,in)}_{\mu} :
 & C^{2 s+k,\lambda}_{E^i} (\mathcal{X})
 & \to
 & C^{2s+k,\lambda,s,\frac{\lambda}{2}}_{E^i} (\mathcal{X}_T) \cap \mathcal{S}_{L^i_{\mu}}.
\end{array}
\end{equation*}
\end{lemma}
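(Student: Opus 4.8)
The plan is to reduce the two mapping properties to the classical parabolic Schauder theory for the heat operator and then to bootstrap in the two indices $k$ and $s$.

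First I would localise. By means of the quadratic partition of unity $\{\phi_U\}$ and the local parametrices $P^i_U$ of \eqref{eq.paramet}, the fundamental solution $\psi^i_\mu$ splits over each coordinate patch into Gaussian-type terms controlled by \eqref{eq.eotfsps} together with a remainder whose singularity in $t$ is weaker by a factor $t^{1/2}$; hence, modulo operators with more regular kernels, $\varPsi^{(i,v)}_\mu$ and $\varPsi^{(i,in)}_\mu$ act on $k_i$-columns of scalar functions as the classical volume and Poisson heat potentials. For these the case $s=1$, $k=0$ is exactly the interior parabolic Schauder estimate: $\varPsi^{(i,v)}_\mu$ carries $C^{0,\lambda,0,\frac{\lambda}{2}}_{E^i}(\mathcal{X}_T)$ into $C^{2,\lambda,1,\frac{\lambda}{2}}_{E^i}(\mathcal{X}_T)$ and $\varPsi^{(i,in)}_\mu$ carries $C^{2,\lambda}_{E^i}(\mathcal{X})$ into $C^{2,\lambda,1,\frac{\lambda}{2}}_{E^i}(\mathcal{X}_T)$, see \cite{LadSoUr67}, \cite{Fri64}, \cite{Eide90}. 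I would prove these directly through the kernels, so that no density of $C^\infty$ in the (non-separable) H\"older spaces is required; the inclusion $\varPsi^{(i,in)}_\mu u_0 \in \mathcal{S}_{L^i_\mu}$ is immediate because $L^i_\mu \varPsi^{(i,in)}_\mu u_0 = 0$ by construction.

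Next I would raise the spatial index $k$ for $s=1$ by induction. Writing $u = \varPsi^{(i,v)}_\mu f$, Lemma \ref{l.bound.heat.initial.hoelder} gives $L^i_\mu u = f$ and $\gamma_0 u = 0$, so for a first order operator $D$ on $\mathcal{X}$ one has $L^i_\mu (Du) = Df + [\mu\Delta^i, D] u$ with $\gamma_0(Du) = 0$, whence $Du = \varPsi^{(i,v)}_\mu (Df + [\mu\Delta^i, D] u)$. If $f \in C^{k+1,\lambda,0,\frac{\lambda}{2}}_{E^i}(\mathcal{X}_T)$, then $Df \in C^{k,\lambda,0,\frac{\lambda}{2}}_{E^i}(\mathcal{X}_T)$, while the inductive hypothesis applied to the weaker datum $f \in C^{k,\lambda,0,\frac{\lambda}{2}}_{E^i}(\mathcal{X}_T)$ already yields $u \in C^{2+k,\lambda,1,\frac{\lambda}{2}}_{E^i}(\mathcal{X}_T)$; since $[\Delta^i,D]$ has order at most $2$ on $\mathcal{X}$, Lemma \ref{l.diff.oper} puts $[\mu\Delta^i,D]u$ into $C^{k,\lambda,1,\frac{\lambda}{2}}_{E^i}(\mathcal{X}_T) \subset C^{k,\lambda,0,\frac{\lambda}{2}}_{E^i}(\mathcal{X}_T)$. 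Applying the inductive hypothesis to both summands gives $Du \in C^{2+k,\lambda,1,\frac{\lambda}{2}}_{E^i}(\mathcal{X}_T)$ for every such $D$, i.e. $u \in C^{3+k,\lambda,1,\frac{\lambda}{2}}_{E^i}(\mathcal{X}_T)$. The identical commutation handles $\varPsi^{(i,in)}_\mu$, the extra term $\varPsi^{(i,in)}_\mu(\gamma_0(Du)) = \varPsi^{(i,in)}_\mu(Du_0)$ being controlled via Theorem \ref{t.emb.hoelder} and Lemma \ref{l.diff.oper}.

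Finally I would raise the parabolic order $s$, for all $k$, once more by induction, using the identities $\partial_t \varPsi^{(i,v)}_\mu f = f - \mu\Delta^i \varPsi^{(i,v)}_\mu f$ and $\partial_t \varPsi^{(i,in)}_\mu u_0 = -\mu\Delta^i \varPsi^{(i,in)}_\mu u_0$, coming from $L^i_\mu \varPsi^{(i,v)}_\mu f = f$ and $L^i_\mu \varPsi^{(i,in)}_\mu u_0 = 0$. For $f \in C^{2(s-1)+k,\lambda,s-1,\frac{\lambda}{2}}_{E^i}(\mathcal{X}_T)$ the step for $s-1$ with spatial index $k+2$ gives $u = \varPsi^{(i,v)}_\mu f \in C^{2s+k,\lambda,s-1,\frac{\lambda}{2}}_{E^i}(\mathcal{X}_T)$; then $\Delta^i u$, being second order in $x$, lies in $C^{2(s-1)+k,\lambda,s-1,\frac{\lambda}{2}}_{E^i}(\mathcal{X}_T)$ by Lemma \ref{l.diff.oper}, hence so does $\partial_t u = f - \mu\Delta^i u$, and combining these two facts accounts for all the derivatives $\nabla^m_x \partial^j_t u$ with $m+2j \leq 2s+k$, $j\leq s$, and their $\frac{\lambda}{2}$-H\"older seminorms in $t$; thus $u \in C^{2s+k,\lambda,s,\frac{\lambda}{2}}_{E^i}(\mathcal{X}_T)$. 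The argument for $\varPsi^{(i,in)}_\mu$ is the same and preserves membership in $\mathcal{S}_{L^i_\mu}$ at every stage. The genuinely hard part is the base case, namely the Schauder bound on the $C^{0,\lambda}_x$- and $C^{0,\frac{\lambda}{2}}_t$-seminorms of $\nabla^2_x \varPsi^{(i,v)}_\mu f$ and $\partial_t \varPsi^{(i,v)}_\mu f$, which rests on the Gaussian kernel estimates \eqref{eq.eotfsps} and the parabolic scaling; once that is granted, the rest is commutator bookkeeping and Lemma \ref{l.diff.oper}.
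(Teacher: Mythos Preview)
Your proof is correct, and in fact it supplies far more detail than the paper does: the paper's own proof of this lemma is the single line ``See \emph{ibid}.'', pointing back to the reference \cite[Ch.~1, \S~9]{Fri64} given for the preceding lemma. Your three-step scheme --- classical parabolic Schauder for the base case $s=1$, $k=0$, then induction on $k$ via commutators with first-order operators, then induction on $s$ via $\partial_t u = f - \mu\Delta^i u$ --- is precisely the argument one would extract from the standard references \cite{Fri64}, \cite{LadSoUr67}, \cite{Eide90} that the paper invokes, so there is no divergence of method, only of explicitness.
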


\begin{proof} See \em{ibid}.
\end{proof}

The next result describes  the crucial property of 
the volume parabolic potential $\varPsi_{\mu}^i$
which we need in the sequel.

\begin{theorem}
\label{t.heat.key2}
Let $\mu>0$, $s\in \mathbb N$, $k \in \mathbb{Z}_{+}$ and 
$0 < \lambda < 1$. For any data $f  \in C^{2(s-1)+k,\lambda,s-1, \frac{\lambda}{2}} _{E^i}
(\mathcal{X}_T)$ and $u_0 \in C^{2s+k,\lambda}_{E^i} (\mathcal{X})$ the 
potential $\varPsi_{\mu}^i (f,u_0)$ is the unique solution 
to problem \eqref{pr.Cauchy.heat} in the space
$C^{2s+k,\lambda,s,\frac{\lambda}{2}}_{E^i} (\mathcal{X}_T)$.
\end{theorem}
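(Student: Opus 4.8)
The plan is to write $\varPsi^i_\mu(f,u_0)=\varPsi^{(i,v)}_\mu f+\varPsi^{(i,in)}_\mu u_0$ and to verify, one after another, membership in the asserted space, the equation $L^i_\mu u=f$, the initial condition $\gamma_0u=u_0$, and uniqueness, relying throughout on the operator bounds recorded in Lemma~\ref{l.heat.key1}. Membership is immediate: since $f\in C^{2(s-1)+k,\lambda,s-1,\frac{\lambda}{2}}_{E^i}(\mathcal{X}_T)$ and $u_0\in C^{2s+k,\lambda}_{E^i}(\mathcal{X})$, Lemma~\ref{l.heat.key1} gives that both $\varPsi^{(i,v)}_\mu f$ and $\varPsi^{(i,in)}_\mu u_0$ lie in $C^{2s+k,\lambda,s,\frac{\lambda}{2}}_{E^i}(\mathcal{X}_T)$ and, moreover, that $L^i_\mu(\varPsi^{(i,in)}_\mu u_0)=0$; hence $\varPsi^i_\mu(f,u_0)$ belongs to the space in question.

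For the differential equation, Lemma~\ref{l.diff.oper} shows that $L^i_\mu=\partial_t+\mu\Delta^i$ carries $C^{2s+k,\lambda,s,\frac{\lambda}{2}}_{E^i}(\mathcal{X}_T)$ continuously into $C^{2(s-1)+k,\lambda,s-1,\frac{\lambda}{2}}_{E^i}(\mathcal{X}_T)$, so $L^i_\mu\varPsi^{(i,v)}_\mu$ is a bounded operator on the latter space; I would show it equals the identity. For smooth $f$ this is precisely the Duhamel computation carried out in the proof of Lemma~\ref{l.Greenhe}: one differentiates $\varPsi^{(i,v)}_\mu f(x,t)=\int_0^t\varPsi^{(i,in)}_\mu(f(\cdot,t'))(x,t-t')\,dt'$ in $t$, applies $\mu\Delta^i_x$, uses $L^i_\mu\psi^i_\mu=0$ for $t-t'>0$, and picks up the boundary term $\varPsi^{(i,in)}_\mu(f(\cdot,t))(x,0)=f(x,t)$ from the approximate-identity behaviour of $\psi^i_\mu$ as $t-t'\to 0^+$. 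To pass to H\"older data one cannot simply invoke density of $C^\infty$, so I would either verify directly the convergence of the singular integral representing $\Delta^i\varPsi^{(i,v)}_\mu f$ near the diagonal $t=t'$ --- where the H\"older modulus of $f$ is indispensable, this being exactly what the Schauder-type estimate behind Lemma~\ref{l.heat.key1} delivers --- or approximate $f$ by smooth $f_\nu\to f$ in the weaker norm $C^{2(s-1)+k,\lambda',s-1,\frac{\lambda'}{2}}_{E^i}(\mathcal{X}_T)$ with $\lambda'<\lambda$ while keeping $\|f_\nu\|$ bounded in the $\lambda$-norm, and pass to the limit in $L^i_\mu\varPsi^{(i,v)}_\mu f_\nu=f_\nu$ uniformly on $\mathcal{X}_T$. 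Together with $L^i_\mu\varPsi^{(i,in)}_\mu u_0=0$ this yields $L^i_\mu\varPsi^i_\mu(f,u_0)=f$ in $\mathcal{X}\times(0,T)$.

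For the initial condition, the trace $\gamma_0$ acts continuously on $C^{2s+k,\lambda,s,\frac{\lambda}{2}}_{E^i}(\mathcal{X}_T)$ by Lemma~\ref{l.bound.trace.holder}, and $\varPsi^{(i,v)}_\mu f$ vanishes on $\{t=0\}$ by its very definition, so $\gamma_0\varPsi^i_\mu(f,u_0)=\gamma_0\varPsi^{(i,in)}_\mu u_0$. The kernel bounds \eqref{eq.eotfsps} make $t\mapsto\psi^i_\mu(x,\cdot,t)$ an approximate identity as $t\to 0^+$, whence $\varPsi^{(i,in)}_\mu u_0(\cdot,t)\to u_0$ uniformly on $\mathcal{X}$; since $\varPsi^{(i,in)}_\mu u_0$ is continuous on $\mathcal{X}_T$ this forces $\gamma_0\varPsi^{(i,in)}_\mu u_0=u_0$, so $\varPsi^i_\mu(f,u_0)$ solves \eqref{pr.Cauchy.heat}. (Alternatively one may apply Lemma~\ref{l.bound.heat.initial.hoelder} to $w:=\varPsi^{(i,in)}_\mu u_0$, which gives $w=\varPsi^{(i,in)}_\mu(\gamma_0w)$, and then invoke injectivity of $\varPsi^{(i,in)}_\mu$ on continuous sections.) Finally, for uniqueness, note that $s\in\mathbb{N}$ and $k\in\mathbb{Z}_+$ force $2s+k\geq 2$ and $s\geq 1$, so $C^{2s+k,\lambda,s,\frac{\lambda}{2}}_{E^i}(\mathcal{X}_T)$ embeds continuously into $C^{2,\lambda,1,\frac{\lambda}{2}}_{E^i}(\mathcal{X}_T)$ straight from the definition of the norms; hence any two solutions of \eqref{pr.Cauchy.heat} in $C^{2s+k,\lambda,s,\frac{\lambda}{2}}_{E^i}(\mathcal{X}_T)$ agree by Lemma~\ref{l.unique.heat.initial.hoelder}.

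The delicate point --- the main obstacle --- is the identity $L^i_\mu\varPsi^{(i,v)}_\mu f=f$ for data of finite H\"older smoothness: since these anisotropic H\"older spaces are non-separable and $C^\infty$ is not dense in them, a pure density argument in the same topology is unavailable, and one must instead control the singular integral defining $\Delta^i\varPsi^{(i,v)}_\mu f$ near $t=t'$, where the H\"older continuity of $f$ enters essentially, or trade smoothness for a slightly smaller H\"older exponent together with uniform bounds. Everything else reduces to the mapping properties of Lemma~\ref{l.heat.key1}, the trace estimate of Lemma~\ref{l.bound.trace.holder}, and the explicit kernel estimates \eqref{eq.eotfsps}.
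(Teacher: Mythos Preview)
Your sketch is correct and considerably more detailed than the paper's own treatment: the paper does not prove Theorem~\ref{t.heat.key2} at all but simply writes ``See \emph{ibid}.'', i.e.\ defers entirely to Friedman's book \cite{Fri64}, just as it does for Lemmata~\ref{l.unique.heat.initial.hoelder}, \ref{l.bound.heat.initial.hoelder}, and \ref{l.heat.key1}. So there is no argument in the paper to compare against; your decomposition into membership (via Lemma~\ref{l.heat.key1}), the Duhamel identity $L^i_\mu\varPsi^{(i,v)}_\mu f=f$, the initial trace, and uniqueness (via Lemma~\ref{l.unique.heat.initial.hoelder}) is exactly the standard route that the cited reference follows, and your identification of the non-density of $C^\infty$ in the anisotropic H\"older class as the genuine technical point is accurate. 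Both of your proposed workarounds---direct control of the near-diagonal singular integral using the H\"older modulus, or approximation in a weaker $\lambda'<\lambda$ norm with uniform $\lambda$-bounds---are the classical devices used in the parabolic Schauder theory, and either suffices.
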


\begin{proof} See \em{ibid}.
\end{proof}

\section{The linearised Navier-Stokes type equations}
\label{s.NS.lin}

Now we begin to study the operators related to a linearisation of the Navier-Stokes equations.
For this purpose, consider the following linear initial problem: 
\begin{equation}
\label{eq.NS.complex.lin}
\left\{
\begin{array}{lcl}
   \partial_t u + \mu \Delta^i u  + {V} ^i_0 u + A^{i-1} p
  =
  f & \mbox{ in } & \mathcal X\times (0,T), \\
   (A^{i-1})^*\, u  =0,\,\, (A^{i-2})^*\, p  =0   & \mbox{ in }  & \mathcal X\times [0,T], \\
u (x,0)= u_0 & \mbox{ in }  & \mathcal X,\\
 \end{array}
\right. 
\end{equation}
where (cf. Lemma \ref{l.N.der}) 
\begin{equation*}
   V_0^i u
 = {\mathcal M}_{i,1} (A^i u^{(0)}, u)+ {\mathcal M}_{i,1} (A^{i} u, u^{(0)})
 + (A^{i-1})^*  \Big( {\mathcal M}_{i,2} (u,u^{(0)}) + {\mathcal M}_{i,2} (u^{(0)},u)\Big)
\end{equation*}
is the linear first order term in $\mathcal{X}_T $ with a fixed sections $u^{(0)}$ 
of the bundle $E^i(t)$.  

\begin{theorem}
\label{t.NS.deriv.unique} 
Suppose that \eqref{eq.property.M1} holds. 
If $u^{(0)}\in  C^{0,0,0,0} _{E^i}(\mathcal{X}_T) \cap \mathcal{D}_{A^{i}}$ 
then for any pair $(u,p)$ satisfying \eqref{eq.NS.complex.lin} with $f=0$ and $u_0=0$, 
\begin{equation}
\label{eq.unicla}
 u\in
  C^{2,0,1,0}_{E^{i}} (\mathcal{X}_T ) \cap \mathcal{S}_{(A^{i-1})^*},\,\, 
   p  \in C^{0,0,0,0}_{E^{i-1}} (\mathcal{X}_T ) \cap \mathcal{D}_{A^{i-1}} , 
\end{equation}
the sections $u$ 
and $A^{i-1} p$ are identically zero.
\end{theorem}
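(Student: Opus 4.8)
The plan is to prove the statement by the classical \emph{energy method} (in the spirit of O.A.~Ladyzhenskaya), exploiting the parabolic dissipation carried by $\mu\Delta^i$ together with the two orthogonality relations forced by the constraint $(A^{i-1})^\ast u=0$. First I would fix $t\in(0,T)$ and pair the first line of \eqref{eq.NS.complex.lin} (with $f=0$) with the section $u(\cdot,t)$ in the Hilbert space $L^2_{E^i}(\mathcal{X})$; the regularity $u\in C^{2,0,1,0}_{E^i}(\mathcal{X}_T)$ from \eqref{eq.unicla} makes $\partial_t u$ and $\Delta^i u$ continuous, so that $(\partial_t u,u)_i=\tfrac12\tfrac{d}{dt}\|u(\cdot,t)\|_i^2$, while every integration by parts is legitimate and, since $\mathcal{X}$ is closed, boundary term free. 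Using $\Delta^i=(A^i)^\ast A^i+A^{i-1}(A^{i-1})^\ast$ together with $(A^{i-1})^\ast u=0$ gives $\mu(\Delta^i u,u)_i=\mu\|A^i u\|_{i+1}^2$, and the pressure term drops out because $(A^{i-1}p,u)_i=(p,(A^{i-1})^\ast u)_{i-1}=0$ (by \eqref{eq.unicla} both $p$ and $A^{i-1}p$ are $C^{0,0,0,0}$, so the adjoint relation applies).

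The remaining task is to estimate $(V_0^i u,u)_i$. The summand $A^{i-1}\big(\mathcal{M}_{i,2}(u,u^{(0)})+\mathcal{M}_{i,2}(u^{(0)},u)\big)$ of $V_0^i u$ lies in the range of $A^{i-1}$, hence is again $L^2$-orthogonal to $u$ by the adjoint identity and $(A^{i-1})^\ast u=0$. For the two $\mathcal{M}_{i,1}$-summands I would use the pointwise bounds \eqref{eq.property.M1} together with the finiteness of $\|u^{(0)}\|_{C^{0,0,0,0}_{E^i}(\mathcal{X}_T)}$ and $\|A^i u^{(0)}\|_{C^{0,0,0,0}_{E^{i+1}}(\mathcal{X}_T)}$ (the latter because $u^{(0)}\in\mathcal{D}_{A^i}$) to get
\[
|(\mathcal{M}_{i,1}(A^i u^{(0)},u),u)_i|\le c\,\|A^i u^{(0)}\|_{C^0}\,\|u(\cdot,t)\|_i^2 ,
\]
\[
|(\mathcal{M}_{i,1}(A^i u,u^{(0)}),u)_i|\le c\,\|u^{(0)}\|_{C^0}\,\|A^i u(\cdot,t)\|_{i+1}\,\|u(\cdot,t)\|_i .
\]
By Young's inequality the second right-hand side is $\le\varepsilon\|A^i u\|_{i+1}^2+C_\varepsilon\|u\|_i^2$, and choosing $\varepsilon$ so small that $c\,\|u^{(0)}\|_{C^0}\,\varepsilon\le\mu/2$ allows the gradient term to be absorbed into the dissipation $\mu\|A^i u\|_{i+1}^2$. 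This absorption, which uses $\mu>0$ essentially, is the one genuinely delicate point: one piece of $V_0^i$ is harmless, being in the range of $A^{i-1}$ and hence orthogonal to $u$ (recall $(A^{i-1})^\ast u=0$), but the piece $\mathcal{M}_{i,1}(A^i u,u^{(0)})$ carries a full spatial derivative of $u$ and can only be controlled against the parabolic term, not against $\|u\|_i$ alone — and, crucially, it suffices that $u^{(0)}$ is merely bounded.

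Collecting all the terms I would obtain, for every $t\in(0,T)$,
\[
\tfrac12\,\tfrac{d}{dt}\|u(\cdot,t)\|_i^2+\tfrac{\mu}{2}\,\|A^i u(\cdot,t)\|_{i+1}^2\le C\,\|u(\cdot,t)\|_i^2 ,
\]
with $C$ depending only on $\mu$, $c_{i,1}(\mathcal{M})$, $\|u^{(0)}\|_{C^0}$ and $\|A^i u^{(0)}\|_{C^0}$, in particular not on $t$. Since $u(\cdot,0)=u_0=0$, Grönwall's lemma applied to $g(t)=\|u(\cdot,t)\|_i^2$ (continuous on $[0,T]$, of class $C^1$ on $(0,T)$, $g(0)=0$, $g'\le 2Cg$) forces $g\equiv0$, so $u\equiv0$ on $\mathcal{X}_T$. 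Substituting $u\equiv0$ back into the first line of \eqref{eq.NS.complex.lin} with $f=0$ leaves $A^{i-1}p=-\partial_t u-\mu\Delta^i u-V_0^i u=0$, which is the second assertion; note that $p$ itself need not be zero, only its image $A^{i-1}p$.
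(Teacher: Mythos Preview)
Your argument is correct and is precisely the energy/Gr\"onwall approach the paper itself indicates: the paper's own proof of this theorem merely refers the reader to the classical sources and to the detailed computation carried out later in Theorem~\ref{t.NS.unique}, which proceeds exactly as you do (pair with $u$, drop the pressure and the $A^{i-1}\mathcal{M}_{i,2}$ terms via $(A^{i-1})^\ast u=0$, bound the two $\mathcal{M}_{i,1}$ pieces using \eqref{eq.property.M1}, absorb the $\|A^i u\|_{i+1}$ contribution into the dissipation by Young's inequality, and conclude by Gr\"onwall).
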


\begin{proof} 
One may follow  \cite{Lera34a} or the proofs of 
Theorem 3.4 for $n = 3$ of \cite{Tema79},  Theorem 
\ref{t.NS.unique} below, proving the uniqueness result with the use of integration by parts.
\end{proof}

\begin{lemma}
\label{l.map.V0.bound}
Let $s,k\in \mathbb N$ and
   $0 < \lambda <  1$.
If $u^{(0)} \in C^{2s+k,\lambda,s,\frac{\lambda}{2}}_{E^i}({\mathcal X}_T)\cap
\mathcal{D}_{A^{i}}$ then
the 
Leray-Helmholtz projection $\pi^i$, the Fr\'echet derivative $({\mathcal N}^i)'_{|u^{(0)} }$ 
and the fundamental solution $\varPsi_\mu^i$ induce linear bounded operators
\begin{equation} \label{eq.Psipi.prime.bound.v}
\varPsi_\mu^{(i,v)} \pi^i: C^{2(s-1)+k,\lambda,s-1,\frac{\lambda}{2}} _{E^i} ({\mathcal X}_T)
\cap\mathcal{D}_{A^{i}} 
\to  C^{2s+k,\lambda,s,\frac{\lambda}{2}} _{E^i} ({\mathcal X}_T)\cap
\mathcal{D}_{A^{i}} \cap \mathcal{S}_{(A^{i-1})^*}, 
\end{equation}
\begin{equation} \label{eq.Psipi.prime.bound.in}
\varPsi_\mu^{(i,in)} : 
C^{2s+k+1,\lambda} _{E^i} ({\mathcal X}) \cap \mathcal{S}_{(A^{i-1})^*}
\to  C^{2s+k,\lambda,s,\frac{\lambda}{2}} _{E^i} ({\mathcal X}_T)\cap
\mathcal{D}_{A^{i}} \cap \mathcal{S}_{(A^{i-1})^*}, 
\end{equation}
\begin{equation} \label{eq.PsiN.prime.bound}
\varPsi_\mu^{(i,v)} \pi^i ({\mathcal N}^i)'_{|u^{(0)} }: 
C^{2s+k,\lambda,s,\frac{\lambda}{2}} _{E^i} ({\mathcal X}_T)\cap
\mathcal{D}_{A^{i}} \to 
C^{2(s+1)+k-1,\lambda,s+1,\frac{\lambda}{2}} _{E^i} ({\mathcal X}_T)\cap
\mathcal{D}_{A^{i}} \cap \mathcal{S}_{(A^{i-1})^*}.
\end{equation}
\end{lemma}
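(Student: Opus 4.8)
The plan is to verify each of the three mapping properties \eqref{eq.Psipi.prime.bound.v}--\eqref{eq.PsiN.prime.bound} by composing the mapping results already established for the individual factors and by checking that the resulting sections lie in the appropriate subspaces $\mathcal{D}_{A^i}$ and $\mathcal{S}_{(A^{i-1})^*}$. For \eqref{eq.Psipi.prime.bound.v}, I would first apply Lemma \ref{l.Helmholtz}, specifically the continuity of $\pi^i$ as in \eqref{eq.proj.cont.t}, to send $C^{2(s-1)+k,\lambda,s-1,\frac{\lambda}{2}}_{E^i}(\mathcal{X}_T)\cap\mathcal{D}_{A^i}$ continuously into $C^{2(s-1)+k,\lambda,s-1,\frac{\lambda}{2}}_{E^i}(\mathcal{X}_T)\cap\mathcal{D}_{A^i}\cap\mathcal{S}_{(A^{i-1})^*}$; then Lemma \ref{l.heat.key1} applied to $\varPsi_\mu^{(i,v)}$ raises the parabolic regularity by two spatial (one temporal) orders, landing in $C^{2s+k,\lambda,s,\frac{\lambda}{2}}_{E^i}(\mathcal{X}_T)$. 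It then remains to see that $\varPsi_\mu^{(i,v)}$ preserves membership in $\mathcal{D}_{A^i}\cap\mathcal{S}_{(A^{i-1})^*}$: this follows because $A^i$ and $(A^{i-1})^*$ commute with $\Delta^i$ (formula \eqref{eq.Hodge.4}), hence with the parabolic operator $L_\mu^i$ and with its fundamental solution, so $A^i\varPsi_\mu^{(i,v)}g=\varPsi_\mu^{(i+1,v)}A^i g$ and similarly for $(A^{i-1})^*$, where on the right the data stays in the right H\"older class by Lemma \ref{l.diff.oper} and by the fact that $\pi^i g$ already lies in $\mathcal{S}_{(A^{i-1})^*}$, forcing $(A^{i-1})^*\varPsi_\mu^{(i,v)}\pi^i g=0$.

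For \eqref{eq.Psipi.prime.bound.in} the argument is the same in spirit: the initial-datum potential $\varPsi_\mu^{(i,in)}$ maps $C^{2s+k+1,\lambda}_{E^i}(\mathcal{X})$ into $C^{2s+k+1,\lambda,s,\frac{\lambda}{2}}_{E^i}(\mathcal{X}_T)\cap\mathcal{S}_{L_\mu^i}$ by Lemma \ref{l.heat.key1}, whence by Lemma \ref{l.diff.oper} into $C^{2s+k,\lambda,s,\frac{\lambda}{2}}_{E^i}(\mathcal{X}_T)$, and the commutation of $A^i,(A^{i-1})^*$ with $\varPsi_\mu^{(i,in)}$ (again via \eqref{eq.Hodge.4}) together with $u_0\in\mathcal{S}_{(A^{i-1})^*}$ yields the required membership in $\mathcal{D}_{A^i}\cap\mathcal{S}_{(A^{i-1})^*}$; note the one extra order of spatial regularity in the hypothesis is exactly what compensates the half-order anisotropic loss coming from the trace/embedding bookkeeping, cf. Theorem \ref{t.emb.hoelder.t}.

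For \eqref{eq.PsiN.prime.bound} I would chain three facts. First, Lemma \ref{l.N.der}, specifically \eqref{eq.der.N.Dom}, gives that $({\mathcal N}^i)'_{|u^{(0)}}$ maps $C^{2s+k,\lambda,s,\frac{\lambda}{2}}_{E^i}(\mathcal{X}_T)\cap\mathcal{D}_{A^i}$ boundedly into $C^{2s+k-1,\lambda,s,\frac{\lambda}{2}}_{E^i}(\mathcal{X}_T)\cap\mathcal{D}_{A^i}$; this is where the hypothesis $u^{(0)}\in C^{2s+k,\lambda,s,\frac{\lambda}{2}}_{E^i}(\mathcal{X}_T)\cap\mathcal{D}_{A^i}$ is consumed, since Lemma \ref{l.product} needs both arguments of ${\mathcal M}_{i,j}$ in the same H\"older class. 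Second, applying $\pi^i$ via \eqref{eq.proj.cont.t} (the target having regularity index $2s+k-1$, which we rewrite as $2(s-1)+k+1$ to fit the loss-of-one-order variant \eqref{eq.proj.cont.t.loss} if needed) keeps us in $\mathcal{D}_{A^i}\cap\mathcal{S}_{(A^{i-1})^*}$. Third, $\varPsi_\mu^{(i,v)}$ via \eqref{eq.Psipi.prime.bound.v} raises the index by two, producing $C^{2(s+1)+k-1,\lambda,s+1,\frac{\lambda}{2}}_{E^i}(\mathcal{X}_T)$, and the subspace memberships propagate by the same commutation argument as above; in particular $\pi^i(\cdot)\in\mathcal{S}_{(A^{i-1})^*}$ makes $(A^{i-1})^*\varPsi_\mu^{(i,v)}\pi^i(\cdot)^i=0$.

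The main obstacle I expect is not any single estimate but the careful tracking of the anisotropic indices through the compositions: one must make sure the half-order losses built into Lemmata \ref{l.Phi.t}, \ref{l.Helmholtz} (the $\mathcal{D}_{A^i}$ versus loss-of-one-order versions) and the parabolic gain of exactly two orders in Lemma \ref{l.heat.key1} are matched, so that the final target is precisely $C^{2(s+1)+k-1,\lambda,s+1,\frac{\lambda}{2}}_{E^i}(\mathcal{X}_T)$ and not merely a weaker space. A secondary, more structural point is to justify rigorously that $A^i$ and $(A^{i-1})^*$ commute past the parabolic potentials $\varPsi_\mu^{(i,v)},\varPsi_\mu^{(i,in)}$; this rests on \eqref{eq.Hodge.4}, which gives $A^i\Delta^i=\Delta^{i+1}A^i$, hence $A^iL_\mu^i=L_\mu^{i+1}A^i$, together with uniqueness of the fundamental solution (Lemma \ref{l.unique.heat.initial.hoelder}) applied on smooth sections and then extended to the H\"older scale by density and continuity of all the operators involved.
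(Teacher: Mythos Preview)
Your proposal is correct and follows essentially the same route as the paper: first assemble the boundedness into the right H\"older class from Lemmata \ref{l.Helmholtz}, \ref{l.N.der}, \ref{l.heat.key1} (plus Theorem \ref{t.emb.hoelder.t}), and then secure the $\mathcal{S}_{(A^{i-1})^*}$ membership by commuting past the parabolic potentials via \eqref{eq.Hodge.4} and uniqueness for the Cauchy problem. The only cosmetic difference is that the paper packages the commutation at the level of the whole projection, proving $\pi^i L_\mu^i=L_\mu^i\pi^i$ and $\pi^i\gamma_0=\gamma_0\pi^i$ (hence $\pi^i\varPsi_\mu^{(i)}=\varPsi_\mu^{(i)}\pi^i$ as inverses), whereas you commute $A^i$ and $(A^{i-1})^*$ individually; both rest on the same identities and yield the same conclusion.
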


\begin{proof} According to Lemmata \ref{l.Helmholtz}, \ref{l.N.der}, \ref{l.heat.key1},  
and the Embedding Theorem \ref{t.emb.hoelder.t}, 
for each $u^{(0)} \in C^{2s+k,\lambda,s,\frac{\lambda}{2}} _{E^i} ({\mathcal X}_T)\cap
\mathcal{D}_{A^{i}} $ we have bounded linear operators:
\begin{equation*} 
\varPsi_\mu^{(i,v)} \pi^i: C^{2(s-1)+k,\lambda,s-1,\frac{\lambda}{2}} _{E^i} ({\mathcal X}_T)
\cap\mathcal{D}_{A^{i}} 
\to  C^{2s+k,\lambda,s,\frac{\lambda}{2}} _{E^i} ({\mathcal X}_T)\cap
\mathcal{D}_{A^{i}} , 
\end{equation*}
\begin{equation*} 
\varPsi_\mu^{(i,in)} : 
C^{2s+k+1,\lambda} _{E^i} ({\mathcal X}) 
\to  C^{2s+k,\lambda,s,\frac{\lambda}{2}} _{E^i} ({\mathcal X}_T)\cap
\mathcal{D}_{A^{i}} , 
\end{equation*}
\begin{equation*} 
\varPsi_\mu^{(i,v)} \pi^i ({\mathcal N}^i)'_{|u^{(0)} }: 
C^{2s+k,\lambda,s,\frac{\lambda}{2}} _{E^i} ({\mathcal X}_T)\cap
\mathcal{D}_{A^{i}} \to 
C^{2(s+1)+k-1,\lambda,s+1,\frac{\lambda}{2}} _{E^i} ({\mathcal X}_T)\cap
\mathcal{D}_{A^{i}} .
\end{equation*} 
On the other hand, 
\begin{equation*} 
(A^{i-1})^* L^i_\mu = L^i_\mu (A^{i-1})^* ,\, \Pi^i L^i_\mu =  L^i_\mu \Pi^i, \, 
\Phi_i L^{i+1}_\mu = L^i_\mu \Phi_i 
\end{equation*} 
\begin{equation*} 
(A^{i-1})^* \gamma_0 = \gamma_0 (A^{i-1})^*, \Pi^i \gamma_0 =  \gamma_0 \Pi^i, \,
\Phi_i  \gamma_0 = \gamma_0 \Phi_i 
\end{equation*} 
 by the very construction and then 
\begin{equation*} 
\pi^i L^i_\mu = L^i_\mu \pi^i ,\,\, 
\pi^i \gamma_0 = \gamma_0 \pi^i.
\end{equation*}  
Hence $\pi^i \varPsi_\mu^{(i)} =\varPsi_\mu^{(i)}  \pi^i$ 
because $\varPsi_\mu^{(i)} $ represents the inverse operator for $(L_\mu ^i, \gamma_0)$
on the scale $C^{2s+k,\lambda,s,\frac{\lambda}{2}} _{E^i} ({\mathcal X}_T)\cap
\mathcal{D}_{A^{i}}$. In particular, 
\begin{equation*} 
\pi^i \varPsi_\mu^{(i,v)} =\varPsi_\mu^{(i,v)}  \pi^i ,\,\, 
\pi^i \varPsi_\mu^{(i,v)} =\varPsi_\mu^{(i,v)}  \pi^i 
\end{equation*} 
that proves the continuity of the operators 
\eqref{eq.Psipi.prime.bound.v}, \eqref{eq.Psipi.prime.bound.in}  and 
\eqref{eq.PsiN.prime.bound}. 
\end{proof} 

According to Uniqueness Theorem \ref{t.NS.deriv.unique}, the `pressure' $p$ is defined 
by \eqref{eq.NS.complex.lin} up to the finitely-dimensional space 
$\mathcal{H}^{i-1}$. In order to achieve the uniqueness we will look for $p$ being   
$L^2_{E^{i-1}}(\mathcal{X})$-orthogonal to $\mathcal{H}^{i-1}$, i.e. 
\begin{equation}
\label{eq.p.ort}
\Pi^{i-1} p (\cdot,t) =0 \mbox{ for each } t \in [0,T].
\end{equation} 

Now, let $C^{2s+k,\lambda,s\frac{\lambda}{2}} _{E^{i-1}} (\mathcal{X}_T) \ominus 
\mathcal{H}^{i-1}$ stand for all sections from 
$C^{2s+k,\lambda,s\frac{\lambda}{2}} _{E^ {i-1}} (\mathcal{X}_T)$,
satisfying \eqref{eq.p.ort}. Then we introduce 
the following  Banach spaces: 
${\mathcal B}^{k,s,\lambda}_{i,1} =$
\begin{equation} \label{eq.sol.smoothness}
    C^{2s+k,\lambda,s,\frac{\lambda}{2}} _{E^i} 
({\mathcal X}_T)\cap\mathcal{D}_{A^{i}} \cap \mathcal{S}_{(A^{i-1})^*}  \times  
C^{2(s-1)+k,\lambda,s-1,\frac{\lambda}{2}} _{E^{i-1}} ({\mathcal X}_T)\cap
\mathcal{D}_{A^{i-1}} \cap \mathcal{S}_{(A^{i-2})*}
\ominus C^{s,\frac{\lambda}{2}}([0,T],\mathcal{H}^{i-1}),
\end{equation}
\begin{equation} \label{eq.data.smoothness}
{\mathcal B}^{k,s,\lambda}_{i,2} =
 C^{2(s-1)+k,\lambda,s-1,\frac{\lambda}{2}} _{E^i} ({\mathcal X}_T)\cap
\mathcal{D}_{A^{i}}
 \times
   C^{2s+k+1,\lambda} _{E^i} ({\mathcal X})\cap\mathcal{S}_{(A^{i-1})^*}
\end{equation}
The following lemma is just an adaptation to the particular function spaces of 
the standard reduction of the linearized Navier-Stokes types 
equation to a pseudo-differential equation that does not involve 
the `pressure' $p$. 

\begin{lemma}
\label{l.reduce.psi}
Assume that   $s,k \in \mathbb N$ and
    $u^{(0)}\in C^{2s+k,\lambda,s,\frac{\lambda}{2}} _{E^i} ({\mathcal X}_T)\cap
\mathcal{D}_{A^{i}}$.     
Let moreover $(f,u_0)$ be an arbitrary pair of ${\mathcal B}^{k,s,\lambda}_{i,2}$. 
Then there is a solution $(u,p)$ of class ${\mathcal B}^{k,s,\lambda}_{i,1}$ 
 to problem \eqref{eq.NS.complex.lin} if and only if there is a solution 
$v \in  C^{2s+k,\lambda,s,\frac{\lambda}{2}} _{E^i} ({\mathcal X}_T) \cap
\mathcal{D}_{A^{i}} \cap \mathcal{S}_{(A^{i-1})^*}$ to pseudo-differential equation 
\begin{equation}
\label{eq.NS.lin.psi}
  v + \varPsi_\mu^{(i,v)} \pi^i ({\mathcal N}^i)'_{|u^{(0)} } v = F
\end{equation}
with the datum $F=\varPsi_\mu^i (\pi^i f, u_0) \in 
C^{2s+k,\lambda,s,\frac{\lambda}{2}} _{E^i} ({\mathcal X}_T)\cap
\mathcal{D}_{A^{i}} \cap \mathcal{S}_{(A^{i-1})^*}$. Besides,  
\begin{equation} 
\label{eq.p.lin}
 u=v, \, \,  p = \varPhi_{i-1}  (I-\pi^i) \Big(f - ({\mathcal N}^i)'_{|u^{(0)} } v  \Big).
\end{equation}
\end{lemma}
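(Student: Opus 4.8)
The plan is to carry out the classical Leray reduction in this elliptic-complex setting: apply the Leray--Helmholtz projection $\pi^i$ to the first equation of \eqref{eq.NS.complex.lin} to eliminate the pressure term $A^{i-1}p$, invert the resulting Cauchy problem for $L^i_\mu$ by the parabolic potential $\varPsi_\mu^i$, and then reconstruct $p$ from the Hodge-type potential $\varPhi_{i-1}$. Throughout I will use the commutation relations $\pi^i L^i_\mu=L^i_\mu\pi^i$, $\pi^i\gamma_0=\gamma_0\pi^i$ and $\pi^i\varPsi_\mu^i=\varPsi_\mu^i\pi^i$ established inside the proof of Lemma \ref{l.map.V0.bound}, the Hodge identities \eqref{eq.Hodge.0}, \eqref{eq.Hodge.2}, \eqref{eq.Hodge.4}, \eqref{eq.Hodge.5}, and the fact (immediate from $A^{i+1}\circ A^i\equiv 0$ and \eqref{eq.Hodge.0}) that $\pi^iA^{i-1}=\varPhi_iA^iA^{i-1}+\Pi^iA^{i-1}=0$, so that $\pi^i$ annihilates the image of $A^{i-1}$ and hence $\pi^iV^i_0u=\pi^i({\mathcal N}^i)'_{|u^{(0)}}u$.

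For the ``only if'' direction I would begin from a solution $(u,p)\in{\mathcal B}^{k,s,\lambda}_{i,1}$. Since $u\in\mathcal{S}_{(A^{i-1})^*}\cap\mathcal{D}_{A^i}$, Lemma \ref{l.Helmholtz} gives $\pi^iu=u$; applying $\pi^i$ to the PDE and dropping the pressure term leaves $L^i_\mu u+\pi^i({\mathcal N}^i)'_{|u^{(0)}}u=\pi^if$ together with $\gamma_0u=u_0$. By the uniqueness part of Theorem \ref{t.heat.key2} and the mapping properties of Lemma \ref{l.map.V0.bound}, $u$ must equal $\varPsi_\mu^i\big(\pi^if-\pi^i({\mathcal N}^i)'_{|u^{(0)}}u,u_0\big)=F-\varPsi_\mu^{(i,v)}\pi^i({\mathcal N}^i)'_{|u^{(0)}}u$, i.e. $v:=u$ solves \eqref{eq.NS.lin.psi}. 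To recover $p$, rewrite the PDE as $A^{i-1}p=f-L^i_\mu u-({\mathcal N}^i)'_{|u^{(0)}}u$ and apply $(I-\pi^i)$; since $(I-\pi^i)u=0$ one has $(I-\pi^i)L^i_\mu u=L^i_\mu(I-\pi^i)u=0$, whence $A^{i-1}p=(I-\pi^i)\big(f-({\mathcal N}^i)'_{|u^{(0)}}v\big)$. Because $p\in\mathcal{S}_{(A^{i-2})^*}$ and $\Pi^{i-1}p=0$ (the normalization built into ${\mathcal B}^{k,s,\lambda}_{i,1}$), Lemma \ref{l.Helmholtz} together with $\varphi^iA^{i-1}=A^{i-1}\varphi^{i-1}$ gives $p=(A^{i-1})^*A^{i-1}\varphi^{i-1}p=\varPhi_{i-1}(A^{i-1}p)$, which is exactly \eqref{eq.p.lin}.

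Conversely, given a solution $v$ of \eqref{eq.NS.lin.psi}, I would set $u:=v$ and define $p$ by \eqref{eq.p.lin}. The membership $(u,p)\in{\mathcal B}^{k,s,\lambda}_{i,1}$, including the relations $(A^{i-2})^*p=0$ and $\Pi^{i-1}p=0$, follows from the hypotheses on $v$, from Lemma \ref{l.N.der}, and from the mapping \eqref{eq.Phi.d} of $\varPhi_{i-1}$. Applying $(L^i_\mu,\gamma_0)$ to \eqref{eq.NS.lin.psi} and using that $\varPsi_\mu^i$ is its right inverse (Theorem \ref{t.heat.key2}), together with $\gamma_0\varPsi_\mu^{(i,v)}=0$, yields $L^i_\mu u+\pi^i({\mathcal N}^i)'_{|u^{(0)}}u=\pi^if$ and $\gamma_0u=u_0$. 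Since $\pi^iu=u$, this gives $\pi^i(L^i_\mu u+V^i_0u-f)=0$, so $L^i_\mu u+V^i_0u-f=-(I-\pi^i)(f-V^i_0u)$; and one checks $A^{i-1}p=(I-\pi^i)(f-V^i_0u)$ by feeding $w:=(I-\pi^i)(f-V^i_0u)$ into \eqref{eq.Hodge.2} and invoking the auxiliary identities $A^i(I-\pi^i)=0$ and $\Pi^i(I-\pi^i)=0$ (both immediate from \eqref{eq.Hodge.0}, \eqref{eq.Hodge.4}, \eqref{eq.Hodge.5}, since $A^i\pi^i=A^i$ and $\Pi^i\pi^i=\Pi^i$). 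Hence $L^i_\mu u+V^i_0u+A^{i-1}p=f$, and $(u,p)$ solves \eqref{eq.NS.complex.lin}.

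The main obstacle here is bookkeeping rather than depth: one must keep careful track of which of the three function-space scales appearing in Lemma \ref{l.Phi.t}, Lemma \ref{l.Helmholtz} and Lemma \ref{l.heat.key1} is in force at each step, so that the compositions $\varPsi_\mu^{(i,v)}\pi^i({\mathcal N}^i)'_{|u^{(0)}}$ and $\varPhi_{i-1}(I-\pi^i)$ land in precisely the spaces defining ${\mathcal B}^{k,s,\lambda}_{i,1}$ (and in particular the datum $F$ lands in $C^{2s+k,\lambda,s,\frac{\lambda}{2}}_{E^i}({\mathcal X}_T)\cap\mathcal{D}_{A^i}\cap\mathcal{S}_{(A^{i-1})^*}$). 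The only genuinely delicate point is the consistency of the recovery formula for $p$, i.e. verifying that $\varPhi_{i-1}$ applied to $(I-\pi^i)(f-({\mathcal N}^i)'_{|u^{(0)}}v)$ really is a primitive of it under $A^{i-1}$ with the correct normalization; this rests on the ``hidden'' identities $A^i(I-\pi^i)=0$, $\Pi^i(I-\pi^i)=0$ and $\varphi^iA^{i-1}=A^{i-1}\varphi^{i-1}$, which must be extracted from Theorem \ref{t.Hoelder.Laplace} and the definition of $\pi^i$.
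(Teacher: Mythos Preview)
Your proof is correct and follows essentially the same route as the paper's: apply $\pi^i$ (killing $A^{i-1}p$ and reducing $V^i_0$ to $({\mathcal N}^i)'_{|u^{(0)}}$), invert the Cauchy problem via Lemma~\ref{l.bound.heat.initial.hoelder}/Theorem~\ref{t.heat.key2} to obtain \eqref{eq.NS.lin.psi}, and in the converse direction define $p$ by \eqref{eq.p.lin} and check $A^{i-1}p=(I-\pi^i)\big(f-({\mathcal N}^i)'_{|u^{(0)}}v\big)$ using that this section is $A^i$-closed and $\Pi^i$-free. The only cosmetic differences are that the paper establishes $A^i$-closedness by duality (via $\pi^i(A^i)^*=(A^i)^*$) rather than your direct identity $A^i\pi^i=A^i$, and it does not separately verify the recovery formula for $p$ in the ``only if'' direction as you do.
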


\begin{proof} First of all, we note that $\pi^i V^i_0 = \pi^i ({\mathcal N}^i)'_{|u^{(0)} }$ 
in this case, see Lemma \ref{l.N.der}. 

If $(u,p)$ is a solution to problem \eqref{eq.NS.complex.lin} from class 
\eqref{eq.sol.smoothness} then, applying the bounded 
linear operator $\varPsi_\mu^{i} \pi^i$ to  \eqref{eq.NS.complex.lin}
we see that  the section $u=v$ is a solution to \eqref{eq.NS.lin.psi} 
with the datum $F=\varPsi_\mu^i (\pi^i f,  u_0) \in 
C^{2s+k,\lambda,s,\frac{\lambda}{2}} _{E^i} ({\mathcal X}_T)\cap
\mathcal{D}_{A^{i}} \cap \mathcal{S}_{(A^{i-1})^*}$. 

Let $v \in  C^{2s+k,\lambda,s,\frac{\lambda}{2}} _{E^i} ({\mathcal X}_T) \cap
\mathcal{D}_{A^{i}} \cap \mathcal{S}_{(A^{i-1})^*}$ be a solution to \eqref{eq.NS.lin.psi} with the datum $F=\varPsi_\mu^i  (\pi^i f,  u_0) \in 
C^{2s+k,\lambda,s,\frac{\lambda}{2}} _{E^i} ({\mathcal X}_T)\cap
\mathcal{D}_{A^{i}} \cap \mathcal{S}_{(A^{i-1})^*}$. Then by Lemma 
\ref{l.bound.heat.initial.hoelder}
we conclude that 
\begin{equation} \label{eq.sol.v}
   \left\{
   \begin{array}{rclll}
     L^i_{\mu} v +  \pi^i ({\mathcal N}^i)'_{|u^{(0)} } v  
   & =
   & \pi^i f
   & \mbox{in}
   & \mathcal{X} \times (0,T) ,\\
	(A^{i-1})^*v & = & 0 & \mbox{in}
   & \mathcal{X} \times [0,T] ,
\\
     \gamma_0\, v
   & =
   &  u_0
   & \mbox{on}
   & \mathcal{X}.
\end{array}
\right.
\end{equation}

Set $(u,p)$ as in \eqref{eq.p.lin}. As $(A^{i-1})^* (A^{i})^* \equiv 0$, we have 
$\pi^i (A^{i})^* = (A^{i})^*$ and then
\begin{equation*} 
\Big( (I-\pi^i) \Big(f - ({\mathcal N}^i)'_{|u^{(0)} } v  \Big),(A^{i})^*w \Big)_{i} = 0
\end{equation*}
for all $w \in C^\infty_{E^{i+1}} ({\mathcal X})$, i.e.
\begin{equation*} 
 (I-\pi^i) \Big(f - ({\mathcal N}^i)'_{|u^{(0)} } v\Big)\in 
C^{2(s-1)+k,\lambda,s-1,\frac{\lambda}{2}} _{E^i} ({\mathcal X}_T)\cap
\mathcal{D}_{A^{i}} \cap
\mathcal{S}_{A^{i}}.
 \end{equation*}
Moreover, since ${\mathcal H}^i \subset 
C^{2(s-1)+k,\lambda,s-1,\frac{\lambda}{2}} _{E^i} ({\mathcal X}_T)\cap
\mathcal{S}_{(A^{i-1})^*} $, formula \eqref{eq.proj.1} implies that 
\begin{equation*} 
 \Pi^i(I-\pi^i) \Big(f - ({\mathcal N}^i)'_{|u^{(0)} } v\Big) =0.
 \end{equation*}
Then, according to Lemma \ref{l.Phi.t} and Corollary \ref{c.Hoelder.d.t}, the section $p$ 
belongs to the space $C^{2(s-1)+k,\lambda,s-1,\frac{\lambda}{2}} _{E^{i-1}} 
({\mathcal X}_T)\cap\mathcal{D}_{A^{i-1}} \ominus {\mathcal H}^{i-1}$ and satisfies 
\begin{equation} 
\label{eq.p.lin.sol}
   A^{i-1} p =  (I-\pi^i) \Big(f - ({\mathcal N}^i)'_{|u^{(0)} } v  \Big), \,\, 
	 (A^{i-2})^* p =0. 
\end{equation}
Thus, adding \eqref{eq.p.lin.sol} to \eqref{eq.sol.v}, we conclude that
 the pair $(u,p)$ is a solution to \eqref{eq.NS.complex.lin} 
from space \eqref{eq.sol.smoothness} that was to be proved.
\end{proof}

Now  we note that the scale of H\"older spaces 
$C^{2s+k, \lambda,s,\frac{\lambda}{2}}_{E} ({\mathcal X}_T)$ 
is coherent with the elliptic and parabolic linear theories but it does not take 
into account the structure of nonlinearity ${\mathcal N}^i$. We are going to 
slightly modify the scale 
$C^{2s+k,s,\lambda,\frac{\lambda}{2}}_{E} (\mathcal{X}_T)$ 
by introducing an additional H\"{o}lder exponent $\lambda'$ in order  to
gain some `smoothness' in $t$. Namely, for $s, k \in \mathbb{Z}_{\geq 0}$ and 
$0 < \lambda < \lambda' < 1$,
we introduce
\begin{equation} \label{eq.mathfrak.C}
   \mathfrak{C}^{k,\mathbf{s} (s,\lambda,\lambda')}_{E} (\mathcal{X}_T)
 :=   C^{2s+k+1,s,\lambda, \frac{\lambda}{2}}_{E} (\mathcal{X}_T) \cap
   C^{2s+k,s,\lambda', \frac{\lambda'}{2}}_{E} (\mathcal{X}_T)
\end{equation}
When given the norm
\begin{equation*}
   \| u \|_{\mathfrak{C}^{k,\mathbf{s} (s,\lambda,\lambda')}_{E} (\mathcal{X}_T)}
 :=
   \| u \|_{C^{2s+k+1,s,\lambda, \frac{\lambda}{2}}_{E} (\mathcal{X}_T)}
 + \| u \|_{C^{2s+k,s,\lambda', \frac{\lambda'}{2}}_{E} (\mathcal{X}_T)}.
\end{equation*}
this is obviously a Banach space.
To certain extent these spaces are similar to those with two-norm convergence which are
of key importance for ill-posed problems.

\begin{corollary}
\label{c.mathfrak.compact}
Let $s,k \in \mathbb{N}$,  $0 < \lambda < \lambda' < 1$. 
The following embedding is compact: 
\begin{equation*}
   \mathfrak{C}^{k,\mathbf{s} (s,\lambda,\lambda')}_{E} (\mathcal{X}_T)
 \hookrightarrow
   \mathfrak{C}^{k+1,\mathbf{s} (s-1,\lambda,\lambda')}_{E} (\mathcal{X}_T).
\end{equation*}
 \end{corollary}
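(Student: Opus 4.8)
The plan is to unfold the definition \eqref{eq.mathfrak.C} of the two spaces and to prove the compactness factor by factor: one factor is handled directly by Theorem~\ref{t.emb.hoelder.t}, the other by an Arzel\`a--Ascoli argument which uses \emph{both} defining norms of $\mathfrak{C}^{k,\mathbf{s}(s,\lambda,\lambda')}_E(\mathcal{X}_T)$ at once. Since $2(s-1)+(k+1)+1=2s+k$ and $2(s-1)+(k+1)=2s+k-1$, formula \eqref{eq.mathfrak.C} gives
\[
\mathfrak{C}^{k,\mathbf{s}(s,\lambda,\lambda')}_E(\mathcal{X}_T)=C^{2s+k+1,\lambda,s,\frac\lambda2}_E(\mathcal{X}_T)\cap C^{2s+k,\lambda',s,\frac{\lambda'}2}_E(\mathcal{X}_T),
\]
\[
\mathfrak{C}^{k+1,\mathbf{s}(s-1,\lambda,\lambda')}_E(\mathcal{X}_T)=C^{2s+k,\lambda,s-1,\frac\lambda2}_E(\mathcal{X}_T)\cap C^{2s+k-1,\lambda',s-1,\frac{\lambda'}2}_E(\mathcal{X}_T),
\]
each equipped with the sum of the norms of its two factors. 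Hence it suffices to show that every bounded sequence $\{u_\nu\}\subset\mathfrak{C}^{k,\mathbf{s}(s,\lambda,\lambda')}_E(\mathcal{X}_T)$ has a subsequence converging in $C^{2s+k,\lambda,s-1,\frac\lambda2}_E(\mathcal{X}_T)$ and, separately, a subsequence converging in $C^{2s+k-1,\lambda',s-1,\frac{\lambda'}2}_E(\mathcal{X}_T)$; a common subsequence then converges in the target space.

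For the second factor we invoke Theorem~\ref{t.emb.hoelder.t}. The sequence $\{u_\nu\}$ is bounded in $C^{2s+k+1,\lambda,s,\frac\lambda2}_E(\mathcal{X}_T)$, and, as $1+\lambda>\lambda'$, i.e. $s+\lambda>(s-1)+\lambda'$, that theorem (with the spatial order $k+1$ kept fixed while the time index drops from $s$ to $s-1$ and the H\"older exponent rises from $\lambda$ to $\lambda'$) yields the compact embedding
\[
C^{2s+k+1,\lambda,s,\frac\lambda2}_E(\mathcal{X}_T)\hookrightarrow C^{2(s-1)+(k+1),\lambda',s-1,\frac{\lambda'}2}_E(\mathcal{X}_T)=C^{2s+k-1,\lambda',s-1,\frac{\lambda'}2}_E(\mathcal{X}_T),
\]
so a subsequence of $\{u_\nu\}$ converges in the second factor.

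The first factor is the crux, since neither defining norm of $\mathfrak{C}^{k,\mathbf{s}(s,\lambda,\lambda')}_E(\mathcal{X}_T)$ controls $C^{2s+k,\lambda,s-1,\frac\lambda2}_E(\mathcal{X}_T)$ up to a compact term on its own: Theorem~\ref{t.emb.hoelder.t} applied to the $\lambda$-factor alone only reaches $C^{2s+k-1,\lambda,s-1,\frac\lambda2}_E(\mathcal{X}_T)$ (one spatial derivative short), and applied to the $\lambda'$-factor alone only $C^{2s+k-2,\lambda,s-1,\frac\lambda2}_E(\mathcal{X}_T)$, so the two have to be combined by hand. Fix a pair $(m,j)$ with $j\le s-1$ and $m+2j\le 2s+k$, and set $w_\nu:=\nabla^m_x\partial^j_t u_\nu$. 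Boundedness of $\{u_\nu\}$ in $C^{2s+k+1,\lambda,s,\frac\lambda2}_E(\mathcal{X}_T)$ bounds $\nabla^{m+1}_x\partial^j_t u_\nu$ in $C^{0,\lambda}_E(\mathcal{X})$ uniformly in $t$ (since $(m+1)+2j\le 2s+k+1$), hence bounds $\{w_\nu\}$ in $C^{1,0}_E(\mathcal{X})$ uniformly in $t$; boundedness in $C^{2s+k,\lambda',s,\frac{\lambda'}2}_E(\mathcal{X}_T)$ bounds $\langle w_\nu\rangle_{\frac{\lambda'}2,\mathcal{X}_T,E}$. Thus $\{w_\nu\}$ is equibounded and equicontinuous on the compact manifold $\mathcal{X}_T$, and by the Arzel\`a--Ascoli theorem, applied along a single subsequence for the finitely many admissible pairs $(m,j)$, we obtain a subsequence (not relabelled) with $w_\nu\to\nabla^m_x\partial^j_t u$ uniformly on $\mathcal{X}_T$ for some limit section $u$. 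Convergence in the norm of $C^{2s+k,\lambda,s-1,\frac\lambda2}_E(\mathcal{X}_T)$ then follows from the elementary interpolation estimates
\[
\|f\|_{C^{0,\lambda}_E(\mathcal{X})}\le c\,\|f\|_{C^{0,0}_E(\mathcal{X})}^{\,1-\lambda}\,\|f\|_{C^{1,0}_E(\mathcal{X})}^{\,\lambda},\qquad \langle g\rangle_{\frac\lambda2,\mathcal{X}_T,E}\le 2\,\|g\|_{C^{0,0,0,0}_E(\mathcal{X}_T)}^{\,1-\lambda/\lambda'}\,\langle g\rangle_{\frac{\lambda'}2,\mathcal{X}_T,E}^{\,\lambda/\lambda'},
\]
applied with $f=(w_\nu-\nabla^m_x\partial^j_t u)(\cdot,t)$, the resulting bound being uniform in $t\in[0,T]$, and with $g=w_\nu-\nabla^m_x\partial^j_t u$: the higher-order factors on the right stay bounded by the two norms of $\mathfrak{C}^{k,\mathbf{s}(s,\lambda,\lambda')}_E(\mathcal{X}_T)$, while the $C^{0,0}$-factors tend to $0$ by the uniform convergence just obtained. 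Taking the maximum over the finitely many pairs $(m,j)$ shows $u_\nu\to u$ in $C^{2s+k,\lambda,s-1,\frac\lambda2}_E(\mathcal{X}_T)$, which together with the previous paragraph completes the proof. The only genuinely delicate point is this last step, where the gap $\lambda<\lambda'$ between the two H\"older exponents is precisely what produces the compactness in $t$ that a single $C^{2s+k,\lambda,s,\frac\lambda2}$-space cannot supply; everything else is bookkeeping with \eqref{eq.mathfrak.C} and Theorem~\ref{t.emb.hoelder.t}.
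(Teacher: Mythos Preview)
Your proof is correct, but you work harder than necessary on the first factor, and your assertion that ``neither defining norm \ldots\ controls $C^{2s+k,\lambda,s-1,\frac\lambda2}_E(\mathcal{X}_T)$ up to a compact term on its own'' is actually false. The paper's proof avoids the Arzel\`a--Ascoli and interpolation step entirely by applying Theorem~\ref{t.emb.hoelder.t} a second time, now with $s'=s$ and only the H\"older exponents changed: since $s+\lambda' > s+\lambda$, the $\lambda'$-factor $C^{2s+k,\lambda',s,\frac{\lambda'}{2}}_E(\mathcal{X}_T)$ embeds \emph{compactly} into $C^{2s+k,\lambda,s,\frac{\lambda}{2}}_E(\mathcal{X}_T)$, and the latter embeds continuously into $C^{2s+k,\lambda,s-1,\frac{\lambda}{2}}_E(\mathcal{X}_T)$ by simply dropping one order of time regularity. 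Composing a compact map with a continuous one gives compactness into the first factor of the target.

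So both arguments are valid, but the paper's is shorter and stays entirely within the embedding theorem you already quote; your route re-proves, by hand, a special case of the compact embedding $C^{2s+k,\lambda',s,\frac{\lambda'}{2}}\hookrightarrow C^{2s+k,\lambda,s,\frac{\lambda}{2}}$ that Theorem~\ref{t.emb.hoelder.t} already supplies. Your approach does make explicit which ingredients (spatial $C^1$-control from the $\lambda$-factor, temporal H\"older control from the $\lambda'$-factor) combine to give equicontinuity, which is instructive, but it is not needed once you notice the two-step application of Theorem~\ref{t.emb.hoelder.t}.
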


\begin{proof} By Theorem \ref{t.emb.hoelder.t}, we have 
1) the space $C^{2s+k+1,\lambda,s,\frac{\lambda}{2}}_{E} (\mathcal{X}_T)$ 
is  embedded compactly into the space 
$C^{2(s-1)+k+1,\lambda',s-1,\frac{\lambda'}{2}}_{E} (\mathcal{X}_T)$ 
since $s+\lambda > s-1+\lambda'$;  2) the space $C^{2s+k,\lambda',s,\frac{\lambda'}{2}}_{E} (
\mathcal{X}_T)$ is  embedded compactly into $C^{2s+k,\lambda,s,\frac{\lambda}{2}}_{E} (
\mathcal{X}_T)$ for $0 < \lambda < \lambda'$; 
3) the space $C^{2s+k,\lambda,s,\frac{\lambda}{2}}_{E} (\mathcal{X}_T)$ 
is embedded continuously into the space 
$C^{2(s-1)+k+2,\lambda,s-1,\frac{\lambda}{2}}_{E} (\mathcal{X}_T)$. 

Hence it follows that if $S$ is a bounded set in the space
$\mathfrak{C}^{k,\mathbf{s} (s,\lambda,\lambda')}_{E} (\mathcal{X}_T)$ given by 
\eqref{eq.mathfrak.C} 
then any sequence from $S$ has a subsequence  converging in the space
\begin{equation*} 
   \mathfrak{C}^{k+1,\mathbf{s} (s-1,\lambda,\lambda')}_{E} (\mathcal{X}_T) :=
   C^{2(s-1)+k+2,s-1,\lambda, \frac{\lambda}{2}}_{E} (\mathcal{X}_T) \cap
   C^{2(s-1)+k+1,s-1,\lambda', \frac{\lambda'}{2}}_{E} (\mathcal{X}_T), 
\end{equation*}
as desired.
\end{proof}

\begin{remark} \label{r.mathfrak.C}
As the space $ \mathfrak{C}^{k,\mathbf{s} (s,\lambda,\lambda')}_{E^i} (\mathcal{X}_T)$
is defined as an intersection of the spaces from the scale 
$C^{2s'+k,s',\lambda, \frac{\lambda}{2}}_{E^i} (\mathcal{X}_T)$, 
we conclude that all the results on the continuity for elliptic 
and parabolic potentials are still valid for it, too. 
\end{remark} 

Thus, we denote by $ \mathfrak{C}^{k,\mathbf{s} (s,\lambda,\lambda')}_{E^i} (\mathcal{X}_T)
\cap \mathcal{D}_{A^{i}} $ the Banach space  
\begin{equation*} 
   \mathfrak{C}^{k,\mathbf{s} (s,\lambda,\lambda')}_{E^i} (\mathcal{X}_T)
 :=   \Big(C^{2s+k+1,s,\lambda, \frac{\lambda}{2}}_{E^i} (\mathcal{X}_T) 
\cap \mathcal{D}_{A^{i}}\Big) \cap \Big(
   C^{2s+k,s,\lambda', \frac{\lambda'}{2}}_{E^i} (\mathcal{X}_T)\cap \mathcal{D}_{A^{i}}\Big)
\end{equation*}
with the norm
\begin{equation*}
   \| u \|_{\mathfrak{C}^{k,\mathbf{s} (s,\lambda,\lambda')}_{E^i} (\mathcal{X}_T) 
\cap \mathcal{D}_{A^{i}}} :=   \| u \|_{C^{2s+k+1,s,\lambda, \frac{\lambda}{2}}_{E^i}
 (\mathcal{X}_T)\cap \mathcal{D}_{A^{i}}} + \| u \|_{C^{2s+k,s,\lambda', \frac{\lambda'}{2}}
_{E^i} (\mathcal{X}_T)\cap \mathcal{D}_{A^{i}}}.
\end{equation*}

\begin{lemma}
\label{l.map.V0}
Let $s,k \in \mathbb N$ and    $0 < \lambda < \lambda' < 1$. 
If $u^{(0)} \in \mathfrak{C}^{k,\mathbf{s} (s,\lambda,\lambda')}\cap
\mathcal{D}_{A^{i}}$ then 
Leray-Helmholtz projection $\pi^i$, the Fr\'echet derivative $({\mathcal N}^i)'_{|u^{(0)} }$ 
and the fundamental solution $\varPsi_\mu^i$ induce linear bounded operators 
\begin{equation} \label{eq.Psipi.prime.comp.v}
\varPsi_\mu^{(i,v)} \pi^i: \mathfrak{C}^{k,\mathbf{s} (s-1,\lambda,\lambda')}_{E^i} 
({\mathcal X}_T)\cap \mathcal{D}_{A^{i}} 
\to  \mathfrak{C}^{k,\mathbf{s} (s,\lambda,\lambda')} _{E^i} ({\mathcal X}_T)\cap
\mathcal{D}_{A^{i}} \cap \mathcal{S}_{(A^{i-1})^*}, 
\end{equation}
\begin{equation} \label{eq.Psipi.prime.comp.in}
\varPsi_\mu^{(i,in)} :  C^{2s+k+2,\lambda} _{E^i} ({\mathcal X})
\cap \mathcal{S}_{(A^{i-1})^*} 
\to  \mathfrak{C}^{k,\mathbf{s} (s,\lambda,\lambda')} _{E^i} ({\mathcal X}_T)\cap
\mathcal{D}_{A^{i}} \cap \mathcal{S}_{(A^{i-1})^*}, 
\end{equation}
and linear compact operators
\begin{equation} \label{eq.mathfrak.N.prime.comp.0}
({\mathcal N}^i)'_{|u^{(0)} }: 
\mathfrak{C}^{k,\mathbf{s} (s,\lambda,\lambda')} _{E^i} ({\mathcal X}_T)\cap
\mathcal{D}_{A^{i}} \to 
\mathfrak{C}^{k,\mathbf{s} (s-1,\lambda,\lambda')} _{E^i} ({\mathcal X}_T)\cap
\mathcal{D}_{A^{i}} .
\end{equation}
\begin{equation} \label{eq.mathfrak.N.prime.comp}
\varPsi_\mu^{(i,v)} \pi^i ({\mathcal N}^i)'_{|u^{(0)} }: 
\mathfrak{C}^{k,\mathbf{s} (s,\lambda,\lambda')} _{E^i} ({\mathcal X}_T)\cap
\mathcal{D}_{A^{i}} \to 
\mathfrak{C}^{k,\mathbf{s} (s,\lambda,\lambda')} _{E^i} ({\mathcal X}_T)\cap
\mathcal{D}_{A^{i}} \cap \mathcal{S}_{(A^{i-1})^*}.
\end{equation}
\end{lemma}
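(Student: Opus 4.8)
The plan is to obtain the two continuity statements \eqref{eq.Psipi.prime.comp.v}, \eqref{eq.Psipi.prime.comp.in} and the two compactness statements \eqref{eq.mathfrak.N.prime.comp.0}, \eqref{eq.mathfrak.N.prime.comp} by combining the boundedness already available on the standard anisotropic scale (Lemma \ref{l.map.V0.bound}, Lemma \ref{l.N.der}, and Remark \ref{r.mathfrak.C}) with the compact embedding of Corollary \ref{c.mathfrak.compact}. Throughout one uses that, by definition, $\mathfrak{C}^{k,\mathbf{s}(s,\lambda,\lambda')}_{E^i}(\mathcal{X}_T)$ is the intersection $C^{2s+k+1,\lambda,s,\frac{\lambda}{2}}_{E^i}(\mathcal{X}_T)\cap C^{2s+k,\lambda',s,\frac{\lambda'}{2}}_{E^i}(\mathcal{X}_T)$ of two spaces of the usual scale, so that an operator is bounded (resp.\ compact) on the $\mathfrak{C}$-spaces as soon as it is so on each of the two constituents; the same remark applies after intersecting with $\mathcal{D}_{A^i}$ and passing to the graph norm, by the argument of Lemma \ref{eq.Ai.Dom}.

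For \eqref{eq.Psipi.prime.comp.v} I would apply Lemma \ref{l.map.V0.bound} twice: once with the spatial index $k$ replaced by $k+1$, which gives $\varPsi_\mu^{(i,v)}\pi^i\colon C^{2s+k-1,\lambda,s-1,\frac{\lambda}{2}}_{E^i}(\mathcal{X}_T)\cap\mathcal{D}_{A^i}\to C^{2s+k+1,\lambda,s,\frac{\lambda}{2}}_{E^i}(\mathcal{X}_T)\cap\mathcal{D}_{A^i}\cap\mathcal{S}_{(A^{i-1})^*}$, and once with $\lambda$ replaced by $\lambda'$, which gives $\varPsi_\mu^{(i,v)}\pi^i\colon C^{2s+k-2,\lambda',s-1,\frac{\lambda'}{2}}_{E^i}(\mathcal{X}_T)\cap\mathcal{D}_{A^i}\to C^{2s+k,\lambda',s,\frac{\lambda'}{2}}_{E^i}(\mathcal{X}_T)\cap\mathcal{D}_{A^i}\cap\mathcal{S}_{(A^{i-1})^*}$; intersecting the two yields precisely \eqref{eq.Psipi.prime.comp.v}. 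For \eqref{eq.Psipi.prime.comp.in} I would use that, since $0<\lambda<\lambda'<1$, Theorem \ref{t.emb.hoelder} gives continuous embeddings $C^{2s+k+2,\lambda}_{E^i}(\mathcal{X})\hookrightarrow C^{2s+k+1,\lambda}_{E^i}(\mathcal{X})$ and $C^{2s+k+2,\lambda}_{E^i}(\mathcal{X})\hookrightarrow C^{2s+k,\lambda'}_{E^i}(\mathcal{X})$, so that Lemma \ref{l.heat.key1} places $\varPsi_\mu^{(i,in)}u_0$ into both $C^{2s+k+1,\lambda,s,\frac{\lambda}{2}}_{E^i}(\mathcal{X}_T)$ and $C^{2s+k,\lambda',s,\frac{\lambda'}{2}}_{E^i}(\mathcal{X}_T)$; the inclusion in $\mathcal{S}_{(A^{i-1})^*}$ is inherited exactly as in the proof of Lemma \ref{l.map.V0.bound}, while membership in $\mathcal{D}_{A^i}$ follows because $A^i$ commutes with $L^i_\mu$ and with $\gamma_0$, whence $A^i\varPsi_\mu^{(i,in)}u_0=\varPsi_\mu^{(i+1,in)}(A^iu_0)$ with $A^iu_0\in C^{2s+k+1,\lambda}_{E^{i+1}}(\mathcal{X})$, and the displayed embeddings again put the right-hand side into $\mathfrak{C}^{k,\mathbf{s}(s,\lambda,\lambda')}_{E^{i}}(\mathcal{X}_T)$.

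To prove the compactness statement \eqref{eq.mathfrak.N.prime.comp.0}, I would first note that Lemma \ref{l.N.der} and Remark \ref{r.mathfrak.C} make $({\mathcal N}^i)'_{|u^{(0)}}$ a \emph{bounded} operator from $\mathfrak{C}^{k,\mathbf{s}(s,\lambda,\lambda')}_{E^i}(\mathcal{X}_T)\cap\mathcal{D}_{A^i}$ into $\mathfrak{C}^{k-1,\mathbf{s}(s,\lambda,\lambda')}_{E^i}(\mathcal{X}_T)\cap\mathcal{D}_{A^i}$: it lowers the spatial smoothness index by one on each of the two constituent spaces, its image lies in $\mathcal{D}_{A^i}$ because $A^i({\mathcal N}^i)'_{|u^{(0)}}v=A^i\big({\mathcal M}_{i,1}(A^iv,u^{(0)})+{\mathcal M}_{i,1}(A^iu^{(0)},v)\big)$ as recorded in the proof of Lemma \ref{l.N.der}, and the latter is controlled by the graph norm of $v$ since $v\in\mathcal{D}_{A^i}$. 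Then I would observe that the embedding $\mathfrak{C}^{k-1,\mathbf{s}(s,\lambda,\lambda')}_{E^i}(\mathcal{X}_T)\cap\mathcal{D}_{A^i}\hookrightarrow\mathfrak{C}^{k,\mathbf{s}(s-1,\lambda,\lambda')}_{E^i}(\mathcal{X}_T)\cap\mathcal{D}_{A^i}$ is compact: by Corollary \ref{c.mathfrak.compact} (with $k$ replaced by $k-1$), a sequence $\{w_\nu\}$ bounded in the graph norm has both $\{w_\nu\}$ and $\{A^iw_\nu\}$ bounded in $\mathfrak{C}^{k-1,\mathbf{s}(s,\lambda,\lambda')}_{E^i}(\mathcal{X}_T)$, so a single subsequence can be chosen along which both converge in $\mathfrak{C}^{k,\mathbf{s}(s-1,\lambda,\lambda')}_{E^i}(\mathcal{X}_T)$; the limit lies in $\mathcal{D}_{A^i}$ and the convergence holds in the graph norm. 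Composing the bounded operator with this compact embedding gives \eqref{eq.mathfrak.N.prime.comp.0}, and \eqref{eq.mathfrak.N.prime.comp} follows immediately, being the composition of the compact operator \eqref{eq.mathfrak.N.prime.comp.0} with the bounded operator \eqref{eq.Psipi.prime.comp.v}.

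I expect the main difficulty to be purely organisational: one must track carefully how the double index $\mathbf{s}(s,\lambda,\lambda')$ changes under $A^i$, $\varPsi_\mu^{(i,v)}$, $\pi^i$ and $\varPsi_\mu^{(i,in)}$, and in particular verify that the one-derivative gain of the parabolic volume potential, combined with the one-derivative loss of $({\mathcal N}^i)'_{|u^{(0)}}$, produces a genuine surplus of total smoothness — precisely the surplus consumed by Corollary \ref{c.mathfrak.compact}. The only point that is not a formality is transferring the compact embedding of Corollary \ref{c.mathfrak.compact} to its version intersected with $\mathcal{D}_{A^i}$, which forces one to extract the convergent subsequence simultaneously for $w_\nu$ and for $A^iw_\nu$.
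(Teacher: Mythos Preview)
Your proposal is correct and follows essentially the same approach as the paper: boundedness of \eqref{eq.Psipi.prime.comp.v}, \eqref{eq.mathfrak.N.prime.comp.0}, \eqref{eq.mathfrak.N.prime.comp} is obtained by applying the results of Lemma~\ref{l.map.V0.bound} and Lemma~\ref{l.N.der} componentwise on the two constituents of the $\mathfrak{C}$-scale (Remark~\ref{r.mathfrak.C}), compactness is extracted from Corollary~\ref{c.mathfrak.compact}, and \eqref{eq.Psipi.prime.comp.in} is handled via Lemma~\ref{l.heat.key1} together with the elliptic embedding theorems. Your treatment is in fact more explicit than the paper's on two points---the passage of the compact embedding through the $\mathcal{D}_{A^i}$ graph norm, and the verification that $\varPsi_\mu^{(i,in)}u_0\in\mathcal{D}_{A^i}$ via the commutation $A^i\varPsi_\mu^{(i,in)}=\varPsi_\mu^{(i+1,in)}A^i$ (the paper instead uses the spare spatial derivative coming from $C^{2s+k+2,\lambda}$).
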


\begin{proof} As operators \eqref{eq.der.N.Dom}, \eqref{eq.Psipi.prime.bound.v} and 
\eqref{eq.PsiN.prime.bound} are bounded and linear, then operators 
 \eqref{eq.Psipi.prime.comp.v}, \eqref{eq.mathfrak.N.prime.comp.0}, 
\eqref{eq.mathfrak.N.prime.comp} are bounded, too (see Remark \ref{r.mathfrak.C}). 
The compactness of operators \eqref{eq.mathfrak.N.prime.comp.0} and 
\eqref{eq.mathfrak.N.prime.comp} follows  from the continuity 
of operators \eqref{eq.der.N.Dom}, 
\eqref{eq.PsiN.prime.bound} and the compact 
embedding described in Corollary \ref{c.mathfrak.compact}. 

Next, according to Lemma \ref{l.heat.key1},  the operator 
$\varPsi_\mu^{(i,in)}$ maps the space  $C^{2s+k+2,\lambda} _{E^i} ({\mathcal X})$ 
continuously to $C^{2s+k+2,\lambda,s,\frac{\lambda}{2}} _{E^i} ({\mathcal X}_T)$  
and  Theorem \ref{t.emb.hoelder.t} provides the continuous embedding of the last space 
to $C^{2s+k+1,\lambda,s,\frac{\lambda}{2}} _{E^i} ({\mathcal X}_T) 
\cap \mathcal{D}_{A^{i}}$. On the other hand, by Theorem \ref{t.emb.hoelder}, 
the space  $C^{2s+k+2,\lambda} _{E^i} ({\mathcal X})$ 
is continuously embedded to  $C^{2s+k+1,\lambda'} _{E^i} ({\mathcal X})$ and 
Lemma \ref{l.heat.key1} implies that  the operator 
$\varPsi_\mu^{(i,in)}$ maps the space  $C^{2s+k+1,\lambda'} _{E^i} ({\mathcal X})$ 
continuously to $C^{2s+k+1,\lambda',s,\frac{\lambda'}{2}} _{E^i} ({\mathcal X}_T)$.
Finally, it follows from Theorem \ref{t.emb.hoelder.t} that 
$C^{2s+k+1,\lambda',s,\frac{\lambda'}{2}} _{E^i} ({\mathcal X}_T)$ 
is continuously embedded to  $C^{2s+k,\lambda's, \frac{\lambda'}{2}} 
_{E^i} ({\mathcal X}) \cap \mathcal{D}_{A^{i}}$. Hence 
the operator \eqref{eq.Psipi.prime.comp.in} is bounded, too.
\end{proof}

\begin{theorem}
\label{t.invertible.psi}
Let $s,k \in \mathbb N$,  $0 < \lambda < \lambda' < 1$,
and $u^{(0)}\in \mathfrak{C}^{k,\mathbf{s} (s,\lambda,\lambda')} _{E^{i}}
(\mathcal{X}_T) \cap \mathcal{D}_{A^{i}}$. Then the following operator 
is continuously invertible: 
\begin{equation}
\label{eq.heat.pseudo.d2}
   I \! + \! \varPsi_\mu^{(i,v)} \pi^i V_0 ^i :
   \mathfrak{C}^{k,\mathbf{s} (s,\lambda,\lambda')} _{E^{i}} (\mathcal{X}_T)
 \cap \mathcal{D}_{A^{i}} \cap \mathcal{S}_{(A^{i-1})^*} 
 \to   \mathfrak{C}^{k,\mathbf{s}  (s,\lambda,\lambda')} _{E^{i}}(\mathcal{X}_T)
\cap \mathcal{D}_{A^{i}} 
\cap \mathcal{S}_{(A^{i-1})^*}.
\end{equation}
\end{theorem}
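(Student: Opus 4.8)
The plan is to combine the Riesz--Schauder theory for compact perturbations of the identity with the uniqueness Theorem \ref{t.NS.deriv.unique}. Since $u^{(0)}$ lies in $\mathfrak{C}^{k,\mathbf{s}(s,\lambda,\lambda')}_{E^{i}}(\mathcal{X}_T)\cap\mathcal{D}_{A^{i}}$, Lemma \ref{l.N.der} yields $\pi^i V_0^i=\pi^i({\mathcal N}^i)'_{|u^{(0)}}$ on the space under consideration, so the operator in \eqref{eq.heat.pseudo.d2} coincides with $I+K$, where $K=\varPsi_\mu^{(i,v)}\pi^i({\mathcal N}^i)'_{|u^{(0)}}$. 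By Lemma \ref{l.map.V0}, formula \eqref{eq.mathfrak.N.prime.comp}, together with Remark \ref{r.mathfrak.C}, the operator $K$ maps $\mathfrak{C}^{k,\mathbf{s}(s,\lambda,\lambda')}_{E^{i}}(\mathcal{X}_T)\cap\mathcal{D}_{A^{i}}\cap\mathcal{S}_{(A^{i-1})^*}$ continuously into itself and is compact there. Hence $I+K$ is a Fredholm operator of index zero, and by the Fredholm alternative it is continuously invertible as soon as it is injective, the boundedness of $(I+K)^{-1}$ being then automatic by the Banach open mapping theorem.

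It remains to prove that $\ker(I+K)=\{0\}$. Suppose $v\in\mathfrak{C}^{k,\mathbf{s}(s,\lambda,\lambda')}_{E^{i}}(\mathcal{X}_T)\cap\mathcal{D}_{A^{i}}\cap\mathcal{S}_{(A^{i-1})^*}$ satisfies $(I+K)v=0$; that is, $v$ solves the pseudo-differential equation \eqref{eq.NS.lin.psi} with datum $F=0$ (as $F=\varPsi_\mu^{i}(\pi^i f,u_0)$ vanishes for $f=0$, $u_0=0$). Since Lemma \ref{l.reduce.psi} carries over verbatim to the intersection scale $\mathfrak{C}^{k,\mathbf{s}(s,\lambda,\lambda')}$ by Remark \ref{r.mathfrak.C}, the pair $(u,p)$ with $u=v$ and $p=\varPhi_{i-1}(I-\pi^i)\big(-({\mathcal N}^i)'_{|u^{(0)}}v\big)$ solves the homogeneous problem \eqref{eq.NS.complex.lin} with $f=0$ and $u_0=0$, and belongs to the class ${\mathcal B}^{k,s,\lambda}_{i,1}$.

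Finally, as $\mathfrak{C}^{k,\mathbf{s}(s,\lambda,\lambda')}_{E^{i}}(\mathcal{X}_T)$ is continuously embedded into $C^{0,0,0,0}_{E^{i}}(\mathcal{X}_T)$ by Theorem \ref{t.emb.hoelder.t}, the fixed section $u^{(0)}$ belongs to $C^{0,0,0,0}_{E^{i}}(\mathcal{X}_T)\cap\mathcal{D}_{A^{i}}$, while the pair $(u,p)$ just constructed is of class \eqref{eq.unicla} by Theorems \ref{t.emb.hoelder} and \ref{t.emb.hoelder.t}. Therefore Theorem \ref{t.NS.deriv.unique} applies and forces $u=v\equiv0$ (and $A^{i-1}p\equiv0$), so $\ker(I+K)=\{0\}$ and the proof is complete.

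The crux is the middle step: one has to check that the reduction of Lemma \ref{l.reduce.psi} genuinely transfers to the scale $\mathfrak{C}^{k,\mathbf{s}(s,\lambda,\lambda')}$ --- in particular that the pressure $p$ reconstructed from $v$ keeps the normalisation $\Pi^{i-1}p(\cdot,t)=0$ and the regularity needed to place $(u,p)$ inside the hypotheses of the uniqueness theorem --- and that the homogeneous datum indeed produces $F=0$. Once this is in place, the rest is the standard Riesz--Schauder argument.
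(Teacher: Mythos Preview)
Your proof is correct and follows essentially the same route as the paper: compactness of $K=\varPsi_\mu^{(i,v)}\pi^i({\mathcal N}^i)'_{|u^{(0)}}$ from Lemma~\ref{l.map.V0}, injectivity by reducing via Lemma~\ref{l.reduce.psi} to the homogeneous linearised problem and invoking Theorem~\ref{t.NS.deriv.unique}, then the Fredholm alternative. The paper packages the injectivity step as a separate Lemma~\ref{l.unique.pseudo} stated on the simpler scale $C^{2s+k,\lambda,s,\lambda/2}$ (which suffices since $\mathfrak{C}^{k,\mathbf{s}(s,\lambda,\lambda')}$ embeds there), but the substance is identical.
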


\begin{proof}
First we observe by Lemma \ref{l.map.V0} that the operator
$(I + \varPsi_\mu^{(i,v)} \pi^i V_0^i)$ is a continuous selfmapping of
$   \mathfrak{C}^{k,\mathbf{s} (s,\lambda,\lambda')} _{E^i}(\mathcal{X}_T)
\cap \mathcal{D}_{A^{i}} \cap \mathcal{S}_{(A^{i-1})^*} $. 
Our next goal is to show that this mapping is one-to-one.

\begin{lemma}
\label{l.unique.pseudo}
Let $s,k \in \mathbb N$, $u^{(0)} \in C^{2s+k,\lambda,s,\frac{\lambda}{2}}  
_{E^i}(\mathcal{X}_T) \cap\mathcal{D}_{A^{i}}$. 
If $v\in C^{2s+k,\lambda,s,\frac{\lambda}{2}} _{E^i}(\mathcal{X}_T)
\cap\mathcal{D}_{A^{i}} \cap \mathcal{S}_{(A^{i-1})^*}$  
satisfies $(I + \varPsi^i_{\mu}\pi^i V^i_0) (v) = 0$, then it is identically zero.
\end{lemma}

\begin{proof}
Indeed, using Lemma \ref{l.reduce.psi} we deduce that
the pair $(u,p)$, with the entry $p$ given by \eqref{eq.p.lin},   
belongs to the space ${\mathcal B}^{k,s,\lambda}_{i,1} $. 
It is a solution to the linearized Navier-Stokes Equation \eqref{eq.NS.complex.lin} 
with the zero data $f$ and $u_0$. 
Finally, according to the Uniqueness Theorem \ref{t.NS.deriv.unique}, 
we see that $v \equiv 0$. 
\end{proof}

Let us finish the proof of Theorem \ref{t.invertible.psi}.
According to Lemma \ref{l.map.V0}, the operator \eqref{eq.heat.pseudo.d2} is Fredholm and
its index equals to zero. Then the statement of the corollary follows from Lemma 
\ref{l.unique.pseudo} and Fredholm theorems.
\end{proof}

Let us introduce  the following Banach spaces: ${\mathfrak B}^{k,s,\lambda,\lambda'}_{i,1}=$ 
\begin{equation} \label{eq.smoothness.sol}
\mathfrak{C}^{k,\mathbf{s} (s,\lambda,\lambda')} _{E^i} 
	(\mathcal{X}_T)\cap \mathcal{D}_{A^{i}}\cap   \mathcal{S}_{(A^{i-1})^\ast} 
 \times
   \mathfrak{C}^{k,\mathbf{s} (s\!-\!1,\lambda,\lambda')} _{E^{i-1}} 
(\mathcal{X}_T) \cap \mathcal{D}_{A^{i-1}} \cap   \mathcal{S}_{(A^{i-2})^\ast}
\ominus C^{s,\frac{\lambda'}{2}}([0,T],\mathcal{H}^{i-1}), 
\end{equation}
\begin{equation} \label{eq.smoothness.data}
{\mathfrak B}^{k,s,\lambda,\lambda'}_{i,2} =\Big( \mathfrak{C}^{k,\mathbf{s} (s-1,\lambda,
\lambda')} _{E^i}
	(\mathcal{X}_T ) \cap \mathcal{D}_{A^{i}}  \Big) \times\Big(
   C^{2s+k+2,\lambda}  _{E^i} (\mathcal{X}) \cap \mathcal{S}_{(A^{i-1})^\ast}\Big).
\end{equation}
By the very definition, the space ${\mathfrak B}^{k,s,\lambda,\lambda'}_{i,j}$ 
is continuously embedded to ${\mathcal B}^{k,s,\lambda}_{i,j}$.

\begin{corollary}
\label{c.NS.deriv.unique}
Assume that $s,k \in \mathbb N$,  $0 < \lambda < \lambda' < 1$ 
   and $u^{(0)} \in \mathfrak{C}^{k,\mathbf{s} (s,\lambda,\lambda')} _{E^i} 
	(\mathcal{X}_T)\cap \mathcal{D}_{A^{i}} $. Then \eqref{eq.NS.complex.lin} 
	induces bounded linear continuously invertible operator ${\mathfrak A}^i_{\rm lin}$ 
between the Banach spaces ${\mathfrak B}^{k,s,\lambda,\lambda'}_{i,1}$ 
and ${\mathfrak B}^{k,s,\lambda,\lambda'}_{i,2}$.
\end{corollary}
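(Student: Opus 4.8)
The plan is to realise $\mathfrak{A}^i_{\rm lin}$ as the linear map sending a pair $(u,p)\in\mathfrak{B}^{k,s,\lambda,\lambda'}_{i,1}$ to the data $(f,u_0):=\big(L^i_\mu u+V^i_0u+A^{i-1}p,\ \gamma_0u\big)$ prescribed by \eqref{eq.NS.complex.lin} (recall $V^i_0u=({\mathcal N}^i)'_{|u^{(0)}}u$ by Lemma \ref{l.N.der}), to check that it maps continuously into $\mathfrak{B}^{k,s,\lambda,\lambda'}_{i,2}$, and then to exhibit its inverse via the pseudo-differential reduction of Lemma \ref{l.reduce.psi} together with the invertibility statement of Theorem \ref{t.invertible.psi}. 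Since both spaces are Banach, once $\mathfrak{A}^i_{\rm lin}$ is shown to be a continuous bijection the continuity of its inverse is automatic by the Banach open mapping theorem; alternatively it may be read off the explicit formula \eqref{eq.p.lin} and Lemma \ref{l.map.V0}.

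First I would verify boundedness. For the first component, $\partial_tu$, $\mu\Delta^iu$ and $V^i_0u$ map $\mathfrak{C}^{k,\mathbf{s}(s,\lambda,\lambda')}_{E^i}(\mathcal{X}_T)\cap\mathcal{D}_{A^i}$ continuously into $\mathfrak{C}^{k,\mathbf{s}(s-1,\lambda,\lambda')}_{E^i}(\mathcal{X}_T)\cap\mathcal{D}_{A^i}$ by Lemmata \ref{l.diff.oper}, \ref{l.N.der} and Remark \ref{r.mathfrak.C}, while $A^{i-1}p$ lies there as well; that the sum stays in $\mathcal{D}_{A^i}$ follows from $A^iL^i_\mu=L^i_\mu A^i$, $A^i\circ A^{i-1}\equiv0$ and the fact that $({\mathcal N}^i)'_{|u^{(0)}}$ preserves $\mathcal{D}_{A^i}$ (Lemma \ref{l.N.der}). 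The delicate point is the trace component: because $u\in\mathcal{D}_{A^i}\cap\mathcal{S}_{(A^{i-1})^*}$ one has $\Delta^iu=(A^i)^*A^iu$, and since $A^iu$ belongs to $\mathfrak{C}^{k,\mathbf{s}(s,\lambda,\lambda')}_{E^{i+1}}(\mathcal{X}_T)$ its time slices lie in $C^{2s+k+1,\lambda}_{E^{i+1}}(\mathcal{X})$; hence $\Delta^iu(\cdot,t)\in C^{2s+k,\lambda}_{E^i}(\mathcal{X})$ with norm controlled by the graph norm of $u$, and elliptic regularity for $\Delta^i$ (Theorem \ref{t.Hoelder.Laplace}, via $u(\cdot,t)=\varphi^i\Delta^iu(\cdot,t)+\Pi^iu(\cdot,t)$) lifts $u$ by two spatial derivatives on every slice. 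Restricting at $t=0$ and using Lemma \ref{l.bound.trace.holder} then yields $\gamma_0u\in C^{2s+k+2,\lambda}_{E^i}(\mathcal{X})\cap\mathcal{S}_{(A^{i-1})^*}$ through a chain of bounded maps, so $\mathfrak{A}^i_{\rm lin}$ is bounded.

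Next I would prove injectivity and surjectivity. If $\mathfrak{A}^i_{\rm lin}(u,p)=0$ then $(u,p)$ solves \eqref{eq.NS.complex.lin} with $f=0$, $u_0=0$; since $u^{(0)}\in C^{0,0,0,0}_{E^i}(\mathcal{X}_T)\cap\mathcal{D}_{A^i}$ and $(u,p)$ meets the hypotheses \eqref{eq.unicla} after the continuous embeddings of Theorem \ref{t.emb.hoelder.t}, the Uniqueness Theorem \ref{t.NS.deriv.unique} gives $u\equiv0$ and $A^{i-1}p\equiv0$; together with $(A^{i-2})^*p=0$ this forces $p(\cdot,t)\in\mathcal{H}^{i-1}$, and the constraint $\Pi^{i-1}p=0$ built into $\mathfrak{B}^{k,s,\lambda,\lambda'}_{i,1}$ gives $p\equiv0$. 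For surjectivity, take $(f,u_0)\in\mathfrak{B}^{k,s,\lambda,\lambda'}_{i,2}$, which embeds continuously into $\mathcal{B}^{k,s,\lambda}_{i,2}$. By Lemma \ref{l.map.V0} the datum $F=\varPsi^i_\mu(\pi^if,u_0)$ lies in $\mathfrak{C}^{k,\mathbf{s}(s,\lambda,\lambda')}_{E^i}(\mathcal{X}_T)\cap\mathcal{D}_{A^i}\cap\mathcal{S}_{(A^{i-1})^*}$, so Theorem \ref{t.invertible.psi} furnishes a (unique) solution $v$ of \eqref{eq.NS.lin.psi} in that space; setting $(u,p)$ as in \eqref{eq.p.lin}, Lemma \ref{l.reduce.psi} shows $(u,p)$ solves \eqref{eq.NS.complex.lin} with data $(f,u_0)$. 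It remains to see $(u,p)\in\mathfrak{B}^{k,s,\lambda,\lambda'}_{i,1}$: the component $u=v$ is already in the right class, and $p=\varPhi_{i-1}(I-\pi^i)(f-({\mathcal N}^i)'_{|u^{(0)}}v)$ lies in $\mathfrak{C}^{k,\mathbf{s}(s-1,\lambda,\lambda')}_{E^{i-1}}(\mathcal{X}_T)\cap\mathcal{D}_{A^{i-1}}$ with $(A^{i-2})^*p=0$ and $\Pi^{i-1}p=0$, by the mapping properties and the identities \eqref{eq.Hodge.1} for $\varPhi_{i-1}$ (Lemma \ref{l.Phi.t}), Corollary \ref{c.Hoelder.d.t} and Lemma \ref{l.map.V0}; hence $p\in\mathfrak{C}^{k,\mathbf{s}(s-1,\lambda,\lambda')}_{E^{i-1}}(\mathcal{X}_T)\cap\mathcal{D}_{A^{i-1}}\cap\mathcal{S}_{(A^{i-2})^*}\ominus C^{s,\frac{\lambda'}{2}}([0,T],\mathcal{H}^{i-1})$. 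Thus $\mathfrak{A}^i_{\rm lin}(u,p)=(f,u_0)$, $\mathfrak{A}^i_{\rm lin}$ is a continuous bijection of Banach spaces, and its inverse — which is precisely the composition $(f,u_0)\mapsto v=(I+\varPsi^{(i,v)}_\mu\pi^iV^i_0)^{-1}\varPsi^i_\mu(\pi^if,u_0)\mapsto(u,p)$ just constructed — is continuous by Theorem \ref{t.invertible.psi} and Lemma \ref{l.map.V0} (or by the open mapping theorem).

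The step I expect to be the main obstacle is not any single estimate but the bookkeeping of the two H\"older exponents $\lambda<\lambda'$ and of the spatial and parabolic orders through the elliptic potentials $\varphi^i,\varPhi_i,\Pi^i,\pi^i$ and the parabolic potential $\varPsi^i_\mu$, so that every image lands in exactly the prescribed space $\mathfrak{B}^{k,s,\lambda,\lambda'}_{i,j}$. The two points requiring genuine care are the regularity match at $t=0$ — recovering $\gamma_0u\in C^{2s+k+2,\lambda}_{E^i}(\mathcal{X})$ from elliptic theory, which crucially exploits $u\in\mathcal{D}_{A^i}\cap\mathcal{S}_{(A^{i-1})^*}$ and compensates the loss of regularity noted in the text when passing between the elliptic and the parabolic scales — and the verification that the pressure reconstructed by \eqref{eq.p.lin} indeed sits in the $\mathcal{H}^{i-1}$-orthogonal, $(A^{i-2})^*$-closed subspace.
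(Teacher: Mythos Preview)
Your proposal is correct and follows essentially the same route as the paper: boundedness of $\mathfrak{A}^i_{\rm lin}$ is checked component-wise (with the trace component $\gamma_0u\in C^{2s+k+2,\lambda}_{E^i}(\mathcal{X})$ recovered from elliptic regularity applied to $(A^i)^*A^iu$, exactly as the paper does via $\gamma_0u=(A^i)^*\varphi^{i+1}A^i\gamma_0u+\Pi^i\gamma_0u$), and bijectivity is obtained from Lemma~\ref{l.reduce.psi} together with Theorem~\ref{t.invertible.psi} and the Uniqueness Theorem~\ref{t.NS.deriv.unique}, after which continuity of the inverse follows from a Banach theorem (the paper cites the Closed Graph Theorem, you cite the Open Mapping Theorem). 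One small slip: the commutation should read $A^iL^i_\mu=L^{i+1}_\mu A^i$, not $L^i_\mu A^i$, though this does not affect the argument.
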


\begin{proof} Indeed, As $A^{i} \circ A^{i-1}\equiv 0$, the operator 
\begin{equation} \label{eq.Ai.bound}
A^{i-1}: \mathfrak{C}^{k,\mathbf{s} (s\!-\!1,\lambda,\lambda')} _{E^{i-1}} 
(\mathcal{X}_T) \cap \mathcal{D}_{A^{i-1}} \cap   \mathcal{S}_{(A^{i-2})^\ast}
\to \mathfrak{C}^{k,\mathbf{s} (s\!-\!1,\lambda,\lambda')} _{E^i} 
(\mathcal{X}_T) \cap \mathcal{D}_{A^{i}}
\end{equation}
is linear and bounded. Moreover,  by \eqref{eq.Hodge.4}, we have 
$A^i L_\mu ^i = L_\mu ^{i+1} A^{i}$ an then 
\begin{equation} \label{eq.Li.bound}
L_\mu^i:
\mathfrak{C}^{k,\mathbf{s} (s,\lambda,\lambda')} _{E^i} 
	(\mathcal{X}_T)\cap \mathcal{D}_{A^{i}}\cap   \mathcal{S}_{(A^{i-1})^\ast}
\to \mathfrak{C}^{k,\mathbf{s} (s\!-\!1,\lambda,\lambda')} _{E^i} 
(\mathcal{X}_T) \cap \mathcal{D}_{A^{i}}
\end{equation}
is linear and bounded, too, because of Lemma \ref{l.diff.oper}. 
The continuity of the operator 
\begin{equation*} 
(L_\mu ^i  + ({\mathcal N}^i)'_{|u^{(0)}},  A^{i-1}) : 
{\mathfrak B}^{k,s,\lambda,\lambda'}_{i,1} 
\to \mathfrak{C}^{k,\mathbf{s} (s\!-\!1,\lambda,\lambda')} _{E^i} 
(\mathcal{X}_T) \cap \mathcal{D}_{A^{i}},
\end{equation*} 
 induced by \eqref{eq.NS.complex.lin}, follows from Lemma 
\ref{l.map.V0}. 

Next, by the definition the operator $\gamma_0$ maps the space 
$\mathfrak{C}^{k,\mathbf{s} (s,\lambda,\lambda')} _{E^i} 
(\mathcal{X}_T)\cap \mathcal{D}_{A^{i}}\cap   \mathcal{S}_{(A^{i-1})^\ast}$ 
continuously to the space $C^{2s+k+1,\lambda} _{E^i} ({\mathcal X})\cap \mathcal{D}_{A^{i}}
\cap   \mathcal{S}_{(A^{i-1})^\ast}$. But complex \eqref{eq.ellcomp} is elliptic and 
hence  Theorem \ref{t.Hoelder.Laplace} and \eqref{eq.Hodge.4}, \eqref{eq.Hodge.5} imply  
\begin{equation*} 
\gamma_0 u = A_i^* \varphi^{i+1} A_i \gamma_0 u
\in C^{2s+k+2,\lambda} _{E^i} ({\mathcal X})
\cap   \mathcal{S}_{(A^{i-1})^\ast}
\end{equation*}
and then the operator
\begin{equation} \label{eq.gamma.bound}
\gamma_0 : \mathfrak{C}^{k,\mathbf{s} (s,\lambda,\lambda')} _{E^i} 
(\mathcal{X}_T)\cap \mathcal{D}_{A^{i}}\cap   \mathcal{S}_{(A^{i-1})^\ast} \to 
C^{2s+k+2,\lambda} _{E^i} ({\mathcal X})
\cap   \mathcal{S}_{(A^{i-1})^\ast}
\end{equation}
is bounded.  
Therefore  the operator 
${\mathfrak A}^i_{\rm lin}$, induced by \eqref{eq.NS.complex.lin}, 
is bounded, too.

It is left to prove that for each pair $(f,u_0)$ from the space 
${\mathfrak B}^{k,s,\lambda,\lambda'}_{i,2}$, defined by \eqref{eq.smoothness.data}, 
there is a unique solution  $(u,p)$ to problem \eqref{eq.NS.complex.lin} 
from the space ${\mathfrak B}^{k,s,\lambda,\lambda'}_{i,1}$, 
defined by \eqref{eq.smoothness.sol},  
and, moreover, 
\begin{equation*}
\|(u,p)\|_{{\mathfrak B}^{k,s,\lambda,\lambda'}_{i,1}} \leq 
c _\mu \|(f,u_0)\|_{{\mathfrak B}^{k,s,\lambda,\lambda'}_{i,2}}
\end{equation*}
with a positive constant $c _\mu$ independent on $(f,u_0)$. 
But the first statement  follows from Lemmata \ref{l.reduce.psi} and 
\ref{l.map.V0}, Theorems \ref{t.invertible.psi} and \ref{t.NS.deriv.unique} and 
the estimate follows from the Banach Closed Graph Theorem. 
\end{proof}

\section{The Navier-Stokes type equations as an open map}
\label{s.NS.OpenMap}

We plan to treat the Navier-Stokes equations  as a nonlinear injective
Fredholm operator with open range in proper Banach spaces.
Recall that a nonlinear operator $\mathfrak{A} : B_2 \to 
B_2$ in Banach spaces $B_1$, $B_2$ is called
Fredholm if it has a Frech\'et derivative at each point $x_0 \in B_1$ and this 
derivative is a Fredholm linear map from $B_1$ to $B_2$ (see \cite{Sm65}).
We begin with the Uniqueness Theorem for \eqref{eq.NS.complex}. 

\begin{theorem}
\label{t.NS.unique}
Suppose that \eqref{eq.property.M1} holds. Then for each pair
  $(f,u_0)  \in  C^{0,0,0,0} _{E^i} (\mathcal{X}_T)
 \times   C^{0,0}_{E^i} (\mathcal{X}) \cap \mathcal{S}_{(A^{i-1})^\ast}$ 
nonlinear Navier-Stokes type equations \eqref{eq.NS.complex} 
have at most one solution in the space
  $ C^{2,0,1,0} _{E^i}({\mathcal{X}_T)} \cap \mathcal{S}_{(A^{i-1})^\ast} \times
   C^{0,0,0,0} _{E^{i-1}}(\mathcal{X}_T) \cap 
\mathcal{D}_{A^{i-1}} $, satisfying \eqref{eq.p.ort}. 
\end{theorem}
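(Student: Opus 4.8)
The plan is to run the classical energy argument of Leray (cf. \cite{Lera34a}, \cite[Theorem 3.4]{Tema79}) in the setting of the elliptic complex \eqref{eq.ellcomp}. Assume that $(v_1,p_1)$ and $(v_2,p_2)$ are two solutions of \eqref{eq.NS.complex} of the indicated regularity corresponding to the same datum $(f,u_0)$, and put $w=v_1-v_2$, $q=p_1-p_2$. Since the maps ${\mathcal M}_{i,1,x}$, ${\mathcal M}_{i,2,x}$ of \eqref{eq.nonlinear.M} are bilinear (as exploited in \eqref{eq.M.diff}), subtracting the two copies of \eqref{eq.NS.complex} shows that $w$ satisfies
\begin{equation*}
L_\mu^i w + {\mathcal M}_{i,1}(A^i v_1, w) + {\mathcal M}_{i,1}(A^i w, v_2) +
\end{equation*}
\begin{equation*}
A^{i-1}\Big( {\mathcal M}_{i,2}(v_1, w) + {\mathcal M}_{i,2}(w, v_2) \Big) + A^{i-1} q = 0
\end{equation*}
in $\mathcal{X}\times(0,T)$, together with $(A^{i-1})^* w = 0$, $(A^{i-2})^* q = 0$ in $\mathcal{X}\times[0,T]$, and $w(\cdot,0)=0$ on $\mathcal{X}$.

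Next I would take the inner product of this identity, for a fixed $t\in(0,T)$, with $w(\cdot,t)$ in $L^2_{E^i}(\mathcal{X})$ and integrate over the closed manifold $\mathcal{X}$, where Green's formula produces no boundary terms. The regularity $w\in C^{2,0,1,0}_{E^i}(\mathcal{X}_T)$ legitimises all the manipulations: $\frac{1}{2}\frac{d}{dt}\|w(\cdot,t)\|_i^2 = (\partial_t w, w)_i$, while by the definition of the formal adjoint and of $\Delta^i$ one has $(\Delta^i w, w)_i = \|A^i w\|_{i+1}^2 + \|(A^{i-1})^* w\|_{i-1}^2 = \|A^i w\|_{i+1}^2$ because $(A^{i-1})^* w = 0$. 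The same constraint kills the pressure contribution $(A^{i-1}q, w)_i = (q, (A^{i-1})^* w)_{i-1} = 0$ (the identity being justified by approximating $w$ in $C^1$ by smooth sections, which suffices since $A^{i-1}q$ is continuous) and the term $\big(A^{i-1}({\mathcal M}_{i,2}(v_1, w) + {\mathcal M}_{i,2}(w, v_2)), w\big)_i = 0$. There remains
\begin{equation*}
\frac{1}{2}\frac{d}{dt}\|w\|_i^2 + \mu\|A^i w\|_{i+1}^2 = -({\mathcal M}_{i,1}(A^i v_1, w), w)_i - ({\mathcal M}_{i,1}(A^i w, v_2), w)_i .
\end{equation*}
Here \eqref{eq.property.M1} gives $|({\mathcal M}_{i,1}(A^i v_1, w), w)_i| \le c_{i,1}({\mathcal M})\, \|A^i v_1\|_{C^{0,0,0,0}_{E^{i+1}}(\mathcal{X}_T)}\, \|w\|_i^2$ and $|({\mathcal M}_{i,1}(A^i w, v_2), w)_i| \le c_{i,1}({\mathcal M})\, \|v_2\|_{C^{0,0,0,0}_{E^i}(\mathcal{X}_T)}\, \|A^i w\|_{i+1}\, \|w\|_i$; the sup-norms of $v_2$ and $A^i v_1$ over the compact layer $\mathcal{X}_T$ are finite because $v_1, v_2 \in C^{2,0,1,0}$.

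Absorbing the term containing $\|A^i w\|_{i+1}$ into the left-hand side by Young's inequality (here $\mu>0$ is essential), I obtain a differential inequality $\frac{d}{dt}\|w(\cdot,t)\|_i^2 \le C\,\|w(\cdot,t)\|_i^2$ on $(0,T)$ with a finite constant $C$ depending only on $\mu$, $c_{i,1}({\mathcal M})$ and the above sup-norms. Since $\|w(\cdot,0)\|_i = 0$, Gronwall's lemma forces $w\equiv 0$, i.e. $v_1=v_2$. Then the first line of \eqref{eq.NS.complex} yields $A^{i-1} q = 0$, and together with $(A^{i-2})^* q = 0$ this means $q\in\mathcal{H}^{i-1}$; the normalisation \eqref{eq.p.ort}, i.e. $\Pi^{i-1} q = 0$, then gives $q=0$, so $p_1=p_2$ as well. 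The main point requiring care is the justification of the energy identity at the modest regularity at hand --- in particular the pairing against $A^{i-1}p$ for a merely continuous $p$ --- together with the verification that the coefficient $C$ in the Gronwall inequality is genuinely bounded on $[0,T]$; both are handled by the compactness of $\mathcal{X}_T$ and a routine density argument.
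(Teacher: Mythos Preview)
Your proof is correct and follows essentially the same energy method as the paper's own argument: take the $L^2_{E^i}(\mathcal X)$ inner product of the difference equation with $w$, use $(A^{i-1})^* w=0$ to kill the pressure and the ${\mathcal M}_{i,2}$ contributions, bound the two ${\mathcal M}_{i,1}$ terms via \eqref{eq.property.M1} with a Young inequality to absorb $\mu\|A^i w\|_{i+1}^2$, and conclude by Gronwall. The only cosmetic difference is that you split ${\mathcal M}_{i,1}(A^i v_1,v_1)-{\mathcal M}_{i,1}(A^i v_2,v_2)$ as ${\mathcal M}_{i,1}(A^i v_1,w)+{\mathcal M}_{i,1}(A^i w,v_2)$, whereas the paper uses the equivalent splitting ${\mathcal M}_{i,1}(A^i w,v_1)+{\mathcal M}_{i,1}(A^i v_2,w)$; either choice leads to the same Gronwall constant structure, and your remark that the integration by parts against $A^{i-1}q$ is legitimate because $A^{i-1}q$ is continuous (from $q\in {\mathcal D}_{A^{i-1}}$) and $w\in C^2$ is exactly what is used implicitly in the paper.
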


\begin{proof} Again, one may follow
   the original paper \cite{Lera34a} or
   the proof of 
Theorem 3.4 for $n = 3$ in \cite{Tema79},
showing the uniqueness result by integration by parts. 

Indeed, let $(u',p'')$ and $(u'',p'')$
be any two solutions to \eqref{eq.NS.complex} from the declared function space. 
Then for the difference
   $(u,p) = (u' - u'', p' - p'')$
we get
\begin{equation}
\label{eq.NS.D0}
 L^i_{\mu} u + A^{i-1} p  = {\mathcal N}^i (u'') - {\mathcal N}^i (u') .
\end{equation}
As $ u \in C^{2,0,1,0}_{E^i} (\mathcal{X}_T ) \cap \mathcal{S}_{(A^{i-1})^\ast}$, $p
 \in   C^{0,0,0,0} (\mathcal{X}_T) \cap \mathcal{D}_{A^{i-1}}$,
 the sections 
   $u$,
   $A^i u$,
   $\partial_t u$,
   $L^i_{\mu} u$ and
   $ A^{i-1} p$
are square integrable over all of $\mathcal{X}$ for each fixed 
$t \in [0,T]$. Furthermore, the integrals
   $(\mathcal{N}^i (u'), u)_{i}$ and
   $(\mathcal{N}^i (u''), u)_{i}$
converge, for both  $\mathcal{N}^i (u')$ and $\mathcal{N}^i (u'')$
are  of class
   $C^{0,0,0,0}_{E^i} (\mathcal{X}_T)$ (see Lemmata \ref{l.product} and \ref{l.diff.oper}). 
Then 
\begin{equation} \label{eq.dt}
   \partial_t \| u (\cdot, t) \|^2_{i}
 = 2\, (\partial_t u, u)_{i},
\end{equation}
and, since $(A^{i-1})^* u=0$ we easily obtain,  
\begin{equation} \label{eq.Lmu}
   (L^i_{\mu} u + A^{i-1} p, u)_{i}  = 
      \frac{1}{2}\, \partial_t \| u (\cdot, t) \|^2_{i}
 + \,\, \mu \| A^{i} u (\cdot, t) \|^2_{i+1} .
\end{equation}
Therefore   \eqref{eq.NS.D0}, \eqref{eq.dt} and \eqref{eq.Lmu}   imply
\begin{equation*}
  \frac{1}{2}  \partial_t\, \| u (\cdot,t) \|^2_{i}
 + \mu  \| A^i u (\cdot,t) \|^2_{i+1} 
=     (\mathcal{N}^i (u''), u)_{i} - (\mathcal{N}^i (u'), u)_{i}.
 \end{equation*}
As $(A^{i-1})^* u=0$, trivial verification shows that
\begin{equation*}
(A^{i-1} {\mathcal M}_{i,2}(u',u') ,u)_{i} = 
(A^{i-1} {\mathcal M}_{i,2}(u'',u'') ,u)_{i} =0.
\end{equation*}
Then, as the form ${\mathcal M}_{i,1}$ is bilinear, we obtain
\begin{equation*}
   (\mathcal{N}^i (u'), u)_{i} - (\mathcal{N}^i (u''), u)_{i} =
  (\mathcal{M}_{i,1} (A^i u',u'), u)_{i}
-  (\mathcal{M}_{i,1} (A^i u'',u''), u)_{i}=
 \end{equation*}
\begin{equation*}
  (\mathcal{M}_{i,1} (A^i u,u'), u)_{i}+  (\mathcal{M}_{i,1} (A^i u'',u), u)_{i},
 \end{equation*}
and so, for all $t \in [0,T]$, 
\begin{equation*}
   \partial_t\, \| u (\cdot,t) \|^2_{i}
 +2 \mu  \| A^i u (\cdot,t) \|^2_{i+1} =  
2(\mathcal{M}_{i,1} (A^i u,u'), u)_{i}
+  2(\mathcal{M}_{i,1} (A^i u'',u), u)_{i}.
\end{equation*}

As $u',u''$ and $u$ are of class $C^{2,0, 1,0} _{E^i} (\mathcal{X}_T)$,
by property \eqref{eq.property.M1} 
we have
\begin{equation*}
 2| (\mathcal{M}_{i,1} (A^i u,u'), u)_{i}|
  \leq 
     2 c ({\mathcal M})\| A ^i u \|_{i+1}
   \| u\|_{i} \| u'\|_{C^{0,0,0,0} _{E^{i}} (\mathcal{X}_T)}\leq 
\end{equation*}
\begin{equation*}
2\mu       \| A ^i u \|^2_{i+1}
  +\frac{c ({\mathcal M})^2}{2\mu} \| u\|^2_{i} 
	\| u'\|^2_{C^{0,0,0,0} _{E^{i}} (\mathcal{X}_T)},
\end{equation*}
\begin{equation*}
 | (\mathcal{M}_{i,1} (A^i u'',u), u)_{i}|
  \leq 
     c ({\mathcal M}) \| A ^i u'' \|_{C^{0,0,0,0} _{E^{i+1}}  (\mathcal{X}_T)}
   \| u\|^2_{i} ,
\end{equation*}
whence for all $t \in [ 0,T]$ we have 
\begin{equation*}
  \partial_t\, \| u (\cdot,t) \|^2_{i}\leq
\Big( \frac{c ({\mathcal M})^2}{2\mu}\| u'\|^2_{C^{0,0,0,0} _{E^{i}} (\mathcal{X}_T)} + 
c ({\mathcal M} )\| A ^i u'' \|_{C^{0,0,0,0} _{E^{i+1}}  (\mathcal{X}_T)} \Big)\,   
\| u (\cdot,t) \|^2_{i}.
\end{equation*}
Now we note that from the inequality
   $x' (t) \leq z (t) x (t)$
for all $t$ in some interval of the real axis it follows that
\begin{equation*}
   \frac{d}{dt} \left( e^{- Z (t)} x (t) \right) \leq 0,
\end{equation*}
where $Z$ is a primitive function for $z$.
Therefore, since $Z (t) = 2c\, t$  is a primitive for the function  $z (t) = 2c$, we conclude
that, for all $t \in (0,T]$, 
\begin{equation*}
\frac{d}{dt} \Big( e^{- 2c\, t} \| u (\cdot, t) \|^2_{i} \Big)
 \leq 0.
\end{equation*}

Pick any $t \in (0,T]$. Then 
\begin{equation*}
   \int_0^t
   \frac{d}{ds}
   \Big( e^{-2c\, s} \| u (\cdot, s) \|^2_{i} \Big) ds
  = 
   e^{-2c\, t} \| u (\cdot,t) \|^2_{i}
 - \| u (\cdot,0) \|^2_{i}
 = 
\end{equation*}
 \begin{equation*}
  e^{- 2c\, t} \| u (\cdot,t) \|^2_{i}
 \leq    0
\end{equation*}
because $u (x,0) = 0$ for all $x \in \mathcal{X}$. 
Thus,  $u \equiv 0$ because 
\begin{equation*}
   \| u (\cdot,t) \|^2_{i} \leq   0 \mbox{ for all } t \in [0,T]. 
\end{equation*}
It follows that $A^{i-1}p (\cdot,t) \equiv 0$ for each $t \in [0,T]$. 
As $p$ is $L^2_{E^i} (\mathcal{X})$-orthogonal to $\mathcal{H}^{i-1}$ we see that 
$p \equiv 0$, i.e., the solutions $(u',p')$ and $(u'',p'')$ coincide,  as desired.
\end{proof}

The nonlinear Navier-Stokes equations can be reduced to a non-linear pseudo-differential 
Fredholm type equation in much the same way as the corresponding
linearised equations. We proceed with an explicit description.

\begin{lemma}
\label{l.map.G}
Let $s,k\in \mathbb N$ and
   $0 < \lambda < \lambda' < 1$. The 
Leray-Helmholtz projection $\pi^i$ and 
the fundamental solution $\varPsi_\mu^i$ induce nonlinear continuous operators
\begin{equation} \label{eq.PsiN.bound}
\varPsi_\mu^{(i,v)} \pi^i {\mathcal N}^i : 
C^{2s+k,\lambda,s,\frac{\lambda}{2}} _{E^i} ({\mathcal X}_T)\cap
\mathcal{D}_{A^{i}} \to 
C^{2(s+1)+k-1,\lambda,s+1,\frac{\lambda}{2}} _{E^i} ({\mathcal X}_T)\cap
\mathcal{D}_{A^{i}} \cap \mathcal{S}_{(A^{i-1})^*}
\end{equation}
and nonlinear continuous compact operators
\begin{equation} \label{eq.N.comp}
{\mathcal N}^i : 
\mathfrak{C}^{k,\mathbf{s} (s,\lambda,\lambda')} _{E^i} ({\mathcal X}_T)\cap
\mathcal{D}_{A^{i}} \to 
\mathfrak{C}^{k,\mathbf{s} (s-1,\lambda,\lambda')} _{E^i} ({\mathcal X}_T)\cap
\mathcal{D}_{A^{i}} . 
\end{equation}
\begin{equation} \label{eq.PsiN.comp}
\varPsi_\mu^{(i,v)} \pi^i {\mathcal N}^i : 
\mathfrak{C}^{k,\mathbf{s} (s,\lambda,\lambda')} _{E^i} ({\mathcal X}_T)\cap
\mathcal{D}_{A^{i}} \to 
\mathfrak{C}^{k,\mathbf{s} (s,\lambda,\lambda')} _{E^i} ({\mathcal X}_T)\cap
\mathcal{D}_{A^{i}} \cap \mathcal{S}_{(A^{i-1})^*}. 
\end{equation}
\end{lemma}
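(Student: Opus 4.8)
The plan is to reduce everything to the already-established mapping properties of the individual building blocks: Lemma \ref{l.product} (bilinearity of the forms $\mathcal{M}_{i,j}$), Lemma \ref{l.diff.oper} (differential operators lower the Hölder index by their order), Lemma \ref{l.N.cont} and Lemma \ref{l.N.der} (the nonlinearity $\mathcal{N}^i$ and its Fréchet derivative map $\cap\,\mathcal{D}_{A^i}$-type spaces continuously with a loss of one derivative), Lemma \ref{l.Helmholtz} (the Leray--Helmholtz projection $\pi^i$ preserves these spaces modulo a gain/loss that is harmless here), Lemma \ref{l.heat.key1} and Theorem \ref{t.heat.key2} (the volume potential $\varPsi_\mu^{(i,v)}$ \emph{gains} two derivatives and one $t$-derivative), and finally Corollary \ref{c.mathfrak.compact} (the key compact embedding for the $\mathfrak{C}$-scale).

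First I would treat \eqref{eq.PsiN.bound}. By Lemma \ref{l.N.cont} the operator $\pi^i\mathcal{N}^i$ maps $C^{2s+k,\lambda,s,\frac{\lambda}{2}}_{E^i}(\mathcal{X}_T)\cap\mathcal{D}_{A^i}$ continuously into $C^{2s+k-1,\lambda,s,\frac{\lambda}{2}}_{E^i}(\mathcal{X}_T)\cap\mathcal{D}_{A^i}$; composing with $\varPsi_\mu^{(i,v)}$, which by Lemma \ref{l.heat.key1} sends $C^{2(s'-1)+k',\lambda,s'-1,\frac{\lambda}{2}}$ into $C^{2s'+k',\lambda,s',\frac{\lambda}{2}}$, with $s'=s+1$ and $k'=k-1$, lands us in $C^{2(s+1)+k-1,\lambda,s+1,\frac{\lambda}{2}}_{E^i}(\mathcal{X}_T)$; the inclusion in $\mathcal{D}_{A^i}\cap\mathcal{S}_{(A^{i-1})^*}$ follows since $A^i$ commutes with $L^i_\mu$ and $(A^{i-1})^*\varPsi_\mu^{(i,v)}\pi^i=0$ because $(A^{i-1})^*\pi^i=0$ (Lemma \ref{l.Helmholtz}, formula \eqref{eq.orthog.range}) and $(A^{i-1})^*$ commutes with the potential (as in the proof of Lemma \ref{l.map.V0.bound}). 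The non-linearity of the map and its continuity come from the identity \eqref{eq.M.diff} exactly as in the proof of Lemma \ref{l.N.cont}.

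Next, for \eqref{eq.N.comp} I would note that $\mathcal{N}^i$ is a \emph{polynomial} (quadratic) map built from $\mathcal{M}_{i,j}$ and $A^{i}, A^{i-1}$; applying \eqref{eq.N.bound} on each of the two summands of the intersection \eqref{eq.mathfrak.C} defining $\mathfrak{C}^{k,\mathbf{s}(s,\lambda,\lambda')}$ shows that $\mathcal{N}^i$ maps $\mathfrak{C}^{k,\mathbf{s}(s,\lambda,\lambda')}_{E^i}(\mathcal{X}_T)\cap\mathcal{D}_{A^i}$ boundedly (in the nonlinear sense — bounded on bounded sets, continuous) into $\mathfrak{C}^{k+1,\mathbf{s}(s-1,\lambda,\lambda')}_{E^i}(\mathcal{X}_T)\cap\mathcal{D}_{A^i}$, and then Corollary \ref{c.mathfrak.compact} gives the compact embedding of the latter into $\mathfrak{C}^{k,\mathbf{s}(s-1,\lambda,\lambda')}_{E^i}(\mathcal{X}_T)\cap\mathcal{D}_{A^i}$; so $\mathcal{N}^i$ is compact. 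Here one must be slightly careful that the loss of one derivative from $\mathcal{N}^i$ is exactly absorbed by the shift $k\to k+1$, $s\to s-1$ in the compact embedding — this bookkeeping, together with checking that the $\mathcal{D}_{A^i}$-graph-norm component behaves correctly under $A^i\mathcal{N}^i(u)=A^i\mathcal{M}_{i,1}(A^iu,u)$ (using $A^i\circ A^{i-1}\equiv0$ and Lemma \ref{l.product}), is the only delicate point. Finally \eqref{eq.PsiN.comp} follows by composing the compact map \eqref{eq.N.comp} with the bounded operator $\varPsi_\mu^{(i,v)}\pi^i$ of Remark \ref{r.mathfrak.C} (the $\mathfrak{C}$-analogue of Lemma \ref{l.heat.key1} applied to $\pi^i\mathcal{N}^i$), precisely as in the proof of the compactness of \eqref{eq.mathfrak.N.prime.comp} in Lemma \ref{l.map.V0}; a composition of a compact operator with a bounded one is compact, and the target lands in $\mathcal{S}_{(A^{i-1})^*}$ because $(A^{i-1})^*\pi^i=0$. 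The main obstacle is really the index bookkeeping in the compact-embedding step; everything else is a routine reassembly of the preceding lemmata.
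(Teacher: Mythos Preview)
Your proposal is essentially correct and follows the same route as the paper, which also derives \eqref{eq.PsiN.bound} from Lemmata \ref{l.N.cont} and \ref{l.heat.key1} and then obtains the compactness of \eqref{eq.N.comp}, \eqref{eq.PsiN.comp} from the continuity of \eqref{eq.N.bound}, \eqref{eq.PsiN.bound} together with Corollary \ref{c.mathfrak.compact}.

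There is, however, a small bookkeeping slip in your compactness argument for \eqref{eq.N.comp}. You factor as
\[
\mathfrak{C}^{k,\mathbf{s}(s,\lambda,\lambda')}\cap\mathcal{D}_{A^i}
\ \xrightarrow{\ \mathcal{N}^i\ }\
\mathfrak{C}^{k+1,\mathbf{s}(s-1,\lambda,\lambda')}\cap\mathcal{D}_{A^i}
\ \hookrightarrow\
\mathfrak{C}^{k,\mathbf{s}(s-1,\lambda,\lambda')}\cap\mathcal{D}_{A^i},
\]
and then invoke Corollary \ref{c.mathfrak.compact} for the \emph{second} arrow. But Corollary \ref{c.mathfrak.compact} asserts the compactness of
$\mathfrak{C}^{k,\mathbf{s}(s,\lambda,\lambda')}\hookrightarrow\mathfrak{C}^{k+1,\mathbf{s}(s-1,\lambda,\lambda')}$,
not of the embedding $\mathfrak{C}^{k+1,\mathbf{s}(s-1,\lambda,\lambda')}\hookrightarrow\mathfrak{C}^{k,\mathbf{s}(s-1,\lambda,\lambda')}$ (the latter is a pure shift in $k$ at fixed $s$ and is not covered by Theorem \ref{t.emb.hoelder.t} as stated). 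The fix is simply to reverse the order of the factorization, exactly as the paper does implicitly: first apply the compact embedding of Corollary \ref{c.mathfrak.compact},
\[
\mathfrak{C}^{k,\mathbf{s}(s,\lambda,\lambda')}\cap\mathcal{D}_{A^i}
\ \hookrightarrow\
\mathfrak{C}^{k+1,\mathbf{s}(s-1,\lambda,\lambda')}\cap\mathcal{D}_{A^i},
\]
and then apply $\mathcal{N}^i$, which by \eqref{eq.N.bound} (used on both summands of the intersection) maps $\mathfrak{C}^{k+1,\mathbf{s}(s-1,\lambda,\lambda')}\cap\mathcal{D}_{A^i}$ continuously into $\mathfrak{C}^{k,\mathbf{s}(s-1,\lambda,\lambda')}\cap\mathcal{D}_{A^i}$. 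The same reordering works for \eqref{eq.PsiN.comp}, using \eqref{eq.PsiN.bound} in place of \eqref{eq.N.bound}. With this correction your argument matches the paper's.
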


\begin{proof}
The continuity of operators \eqref{eq.PsiN.bound}, \eqref{eq.PsiN.comp}
follows from Lemmata \ref{l.N.cont} and \ref{l.heat.key1}.
Then the compactness of  operators \eqref{eq.N.comp} and \eqref{eq.PsiN.comp}  follows  from 
the continuity  of operators \eqref{eq.PsiN.bound}, \eqref{eq.N.bound} and the compact 
embedding described in Corollary \ref{c.mathfrak.compact}. 
\end{proof}

The following lemma is just an adaptation to the particular function spaces of 
the standard reduction of the Navier-Stokes types 
equation to a pseudo-differential equation that does not involve 
the `pressure' $p$. 

\begin{lemma}
\label{l.reduce.psi.n}
Suppose that $s, k \in \mathbb N$ and $0<\lambda<\lambda'<1$.  
Let moreover $(f,u_0)$ be an arbitrary pair of ${\mathcal B}^{k,s,\lambda}
_{i,2}$, defined by 
\eqref{eq.data.smoothness}, 
Then there is a solution $ (u,p) $ of class ${\mathcal B}^{k,s,\lambda}
_{i,1}$, defined by \eqref{eq.sol.smoothness}, 
 to problem \eqref{eq.NS.complex} if and only if there is a solution 
$v \in  C^{2s+k,\lambda,s,\frac{\lambda}{2}} _{E^i} ({\mathcal X}_T) \cap
\mathcal{D}_{A^{i}} \cap \mathcal{S}_{(A^{i-1})^*}$ to 
pseudo-differential equation 
\begin{equation}
\label{eq.NS.lin.psi.n}
  v + \varPsi_\mu^{(i,v)} \pi^i {\mathcal N}^i v = F
\end{equation}
with the datum $F=\varPsi_\mu^i (\pi^i f, u_0) \in 
C^{2s+k,\lambda,s,\frac{\lambda}{2}} _{E^i} ({\mathcal X}_T)\cap
\mathcal{D}_{A^{i}} \cap \mathcal{S}_{(A^{i-1})^*}$. Besides,  
\begin{equation} 
\label{eq.p.lin.n}
 u=v, \,\,  p = \varPhi_{i-1}  (I-\pi^i) \Big(f - {\mathcal N}^i v  \Big).
\end{equation}
\end{lemma}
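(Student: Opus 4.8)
The plan is to transcribe, almost verbatim, the proof of Lemma \ref{l.reduce.psi}, replacing the Fr\'echet derivative $({\mathcal N}^i)'_{|u^{(0)}}$ and the linear term $V^i_0$ by the full non-linear operator ${\mathcal N}^i$ and invoking Lemma \ref{l.N.cont} and Lemma \ref{l.map.G} in place of Lemmata \ref{l.N.der}, \ref{l.map.V0.bound}. The two structural facts that make the pressure disappear are unchanged: first, $\pi^i A^{i-1}\equiv 0$, which follows from $\pi^i=(A^i)^*A^i\varphi^i+\Pi^i$, the intertwining relations \eqref{eq.Hodge.5}, the property $A^i\circ A^{i-1}\equiv 0$ of complex \eqref{eq.ellcomp}, and $\Pi^i A^{i-1}=0$; second, $\pi^i$ commutes with $L^i_\mu$ and with $\gamma_0$, hence with the parabolic potential $\varPsi^i_\mu$, exactly as recorded in the proof of Lemma \ref{l.map.V0.bound}.

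For the implication ``$\Rightarrow$'', let $(u,p)\in{\mathcal B}^{k,s,\lambda}_{i,1}$ solve \eqref{eq.NS.complex}. Applying the bounded linear operator $\varPsi^i_\mu\pi^i$ to the first line of \eqref{eq.NS.complex} and using $\pi^i A^{i-1}p=0$, $\pi^i L^i_\mu u=L^i_\mu u$ (since $\pi^i u=u$ by Lemma \ref{l.Helmholtz}), $\pi^i u_0=u_0$ and the representation $u=\varPsi^i_\mu(L^i_\mu u,\gamma_0 u)$ of Lemma \ref{l.bound.heat.initial.hoelder}, we obtain that $v:=u$ satisfies \eqref{eq.NS.lin.psi.n} with $F=\varPsi^i_\mu(\pi^i f,u_0)$; that $F$ lies in $C^{2s+k,\lambda,s,\frac{\lambda}{2}}_{E^i}(\mathcal{X}_T)\cap\mathcal{D}_{A^i}\cap\mathcal{S}_{(A^{i-1})^*}$ follows from Lemmata \ref{l.Helmholtz}, \ref{l.heat.key1} (cf. \eqref{eq.Psipi.prime.bound.v}, \eqref{eq.Psipi.prime.bound.in}), and $v$ lies in the required space because $u$ does.

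For the converse, let $v\in C^{2s+k,\lambda,s,\frac{\lambda}{2}}_{E^i}(\mathcal{X}_T)\cap\mathcal{D}_{A^i}\cap\mathcal{S}_{(A^{i-1})^*}$ solve \eqref{eq.NS.lin.psi.n}. Comparing $v=\varPsi^i_\mu(L^i_\mu v,\gamma_0 v)$ (Lemma \ref{l.bound.heat.initial.hoelder}) with $v=F-\varPsi^{(i,v)}_\mu\pi^i{\mathcal N}^i v=\varPsi^i_\mu(\pi^i f-\pi^i{\mathcal N}^i v,u_0)$ and using the uniqueness in Theorem \ref{t.heat.key2}, we get that $v$ solves the Cauchy problem $L^i_\mu v+\pi^i{\mathcal N}^i v=\pi^i f$, $(A^{i-1})^* v=0$, $\gamma_0 v=u_0$. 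Now set $u:=v$ and $p:=\varPhi_{i-1}(I-\pi^i)(f-{\mathcal N}^i v)$ as in \eqref{eq.p.lin.n}. Since $(A^{i-1})^*(A^i)^*\equiv 0$ yields $\pi^i(A^i)^*=(A^i)^*$ (exactly as in the proof of Lemma \ref{l.reduce.psi}), the orthogonality relations \eqref{eq.proj.1} show that $(I-\pi^i)(f-{\mathcal N}^i v)$ is $L^2$-orthogonal to the range of $(A^i)^*$ and to $\mathcal{H}^i$; being moreover annihilated by $A^i$, it lies in $C^{2(s-1)+k,\lambda,s-1,\frac{\lambda}{2}}_{E^i}(\mathcal{X}_T)\cap\mathcal{D}_{A^i}\cap\mathcal{S}_{A^i}$ with vanishing harmonic part, so Lemma \ref{l.Phi.t} (with the index shifted down by one) together with Corollary \ref{c.Hoelder.d.t} gives $p\in C^{2(s-1)+k,\lambda,s-1,\frac{\lambda}{2}}_{E^{i-1}}(\mathcal{X}_T)\cap\mathcal{D}_{A^{i-1}}\cap\mathcal{S}_{(A^{i-2})^*}\ominus\mathcal{H}^{i-1}$ with $A^{i-1}p=(I-\pi^i)(f-{\mathcal N}^i v)$ and $(A^{i-2})^*p=0$. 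Adding this identity to the Cauchy problem for $v$ and using $\pi^i{\mathcal N}^i v+(I-\pi^i){\mathcal N}^i v={\mathcal N}^i v$ and $\pi^i f+(I-\pi^i)f=f$ yields $L^i_\mu u+{\mathcal N}^i u+A^{i-1}p=f$, so $(u,p)\in{\mathcal B}^{k,s,\lambda}_{i,1}$ solves \eqref{eq.NS.complex}, and \eqref{eq.p.lin.n} holds.

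The only place where the non-linear case differs from Lemma \ref{l.reduce.psi} is the bookkeeping of the Hölder indices: ${\mathcal N}^i$ costs one spatial derivative (Lemma \ref{l.N.cont}) whereas $\varPsi^{(i,v)}_\mu$ gains two (Lemma \ref{l.heat.key1}), so one must check that every composition lands in the space advertised in \eqref{eq.PsiN.bound} and in \eqref{eq.sol.smoothness}--\eqref{eq.data.smoothness}, and that the one-derivative loss in the Hölder version of the Hodge decomposition (Corollary \ref{c.Hoelder.d.t}) is consistent with the regularity claimed for $p$. Since each ingredient is already a \emph{continuous} map between the relevant Banach spaces, no new estimate is required and this verification is routine; I therefore expect the write-up to be a short transcription of the proof of Lemma \ref{l.reduce.psi}.
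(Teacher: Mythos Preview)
Your proposal is correct and matches the paper's approach exactly: the paper's proof of this lemma consists of the single sentence ``It is similar to the proof of Lemma \ref{l.reduce.psi}'' (with an obvious self-referential typo for \ref{l.reduce.psi}), and what you have written is precisely that transcription, with $({\mathcal N}^i)'_{|u^{(0)}}$ replaced by ${\mathcal N}^i$ and the continuity lemmata updated accordingly. No additional idea is needed.
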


\begin{proof} It is similar to the proof of Lemma \ref{l.reduce.psi.n}. 
\end{proof}

We are already in a position to state an open mapping theorem for 
\eqref{eq.NS.lin.psi.n}. 

\begin{theorem}
\label{t.OpenMap.psi}
Let $s,k \in \mathbb N$,  $0 < \lambda < \lambda' < 1$. Then the following mapping 
is continuous, Fredholm,  injective and open:
\begin{equation}\label{eq.heat.pseudo.nn}
   I \! + \! \varPsi_\mu^{(i,v)} \pi^i {\mathcal N} ^i :
   \mathfrak{C}^{k,\mathbf{s} (s,\lambda,\lambda')} _{E^{i}} (\mathcal{X}_T)
	 \cap \mathcal{D}_{A^{i}} \cap \mathcal{S}_{(A^{i-1})^*} 
 \to   \mathfrak{C}^{k,\mathbf{s}  (s,\lambda,\lambda')} _{E^{i}}(\mathcal{X}_T)
\cap \mathcal{D}_{A^{i}} 
\cap \mathcal{S}_{(A^{i-1})^*}.
\end{equation}
\end{theorem}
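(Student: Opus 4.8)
The plan is to obtain all four assertions by reading them off from the properties of the linearised map already at our disposal. The continuity of \eqref{eq.heat.pseudo.nn} is nothing but operator \eqref{eq.PsiN.comp} of Lemma \ref{l.map.G}. The crucial observation is that, the forms ${\mathcal M}_{i,j}$ being bilinear, the operator ${\mathcal N}^i$ is Fr\'echet differentiable by Lemma \ref{l.N.der}, with derivative $({\mathcal N}^i)'_{|u^{(0)}}$ depending affinely --- hence continuously --- on $u^{(0)}$; composing with the bounded linear operator $\varPsi_\mu^{(i,v)}\pi^i$ one concludes that \eqref{eq.heat.pseudo.nn} is a $C^1$ (in fact $C^\infty$) selfmapping of $\mathfrak{C}^{k,\mathbf{s}(s,\lambda,\lambda')}_{E^i}(\mathcal{X}_T)\cap\mathcal{D}_{A^i}\cap\mathcal{S}_{(A^{i-1})^*}$ whose Fr\'echet derivative at a point $u^{(0)}$ is
\begin{equation*}
   \bigl(I+\varPsi_\mu^{(i,v)}\pi^i{\mathcal N}^i\bigr)'_{|u^{(0)}}
 = I+\varPsi_\mu^{(i,v)}\pi^i({\mathcal N}^i)'_{|u^{(0)}}
 = I+\varPsi_\mu^{(i,v)}\pi^i V_0^i,
\end{equation*}
the last equality holding because $\pi^i({\mathcal N}^i)'_{|u^{(0)}}=\pi^i V_0^i$ (cf. the opening lines of the proof of Lemma \ref{l.reduce.psi}). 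That is, the derivative is precisely operator \eqref{eq.heat.pseudo.d2}.

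With the derivative identified, Theorem \ref{t.invertible.psi} at once gives that it is continuously invertible on $\mathfrak{C}^{k,\mathbf{s}(s,\lambda,\lambda')}_{E^i}(\mathcal{X}_T)\cap\mathcal{D}_{A^i}\cap\mathcal{S}_{(A^{i-1})^*}$ at every point $u^{(0)}$; in particular it is Fredholm of index zero, so the map \eqref{eq.heat.pseudo.nn} is Fredholm. Furthermore, being a $C^1$ map whose derivative is everywhere a topological isomorphism, \eqref{eq.heat.pseudo.nn} is a local $C^1$-diffeomorphism by the Inverse Function Theorem for maps of Banach spaces; since a map which is locally open everywhere is open, this establishes openness.

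To prove injectivity I would fall back on the Uniqueness Theorem \ref{t.NS.unique}. Let $v_1,v_2$ lie in $\mathfrak{C}^{k,\mathbf{s}(s,\lambda,\lambda')}_{E^i}(\mathcal{X}_T)\cap\mathcal{D}_{A^i}\cap\mathcal{S}_{(A^{i-1})^*}$ with $\bigl(I+\varPsi_\mu^{(i,v)}\pi^i{\mathcal N}^i\bigr)(v_1)=\bigl(I+\varPsi_\mu^{(i,v)}\pi^i{\mathcal N}^i\bigr)(v_2)=:F$. Setting $f:=L_\mu^i F$ and $u_0:=\gamma_0 F$, the identity $\pi^i F=F$ --- valid because $F\in\mathcal{D}_{A^i}\cap\mathcal{S}_{(A^{i-1})^*}$, see Lemma \ref{l.Helmholtz} --- together with the commutation $\pi^iL_\mu^i=L_\mu^i\pi^i$ (established in the proof of Lemma \ref{l.map.V0.bound}) yields $\pi^i f=f$, while Lemma \ref{l.bound.heat.initial.hoelder} gives $F=\varPsi_\mu^i(\pi^i f,u_0)$; moreover $(f,u_0)$ belongs to ${\mathcal B}^{k,s,\lambda}_{i,2}$ by Lemma \ref{l.diff.oper} and Lemma \ref{l.bound.trace.holder}. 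As $\mathfrak{C}^{k,\mathbf{s}(s,\lambda,\lambda')}_{E^i}(\mathcal{X}_T)\cap\mathcal{D}_{A^i}$ embeds continuously into $C^{2s+k,\lambda,s,\frac{\lambda}{2}}_{E^i}(\mathcal{X}_T)\cap\mathcal{D}_{A^i}$, Lemma \ref{l.reduce.psi.n} applies to each $v_j$ and produces, through \eqref{eq.p.lin.n}, a solution $(u_j,p_j)=\bigl(v_j,\varPhi_{i-1}(I-\pi^i)(f-{\mathcal N}^i v_j)\bigr)$ of problem \eqref{eq.NS.complex} with data $(f,u_0)$ lying in ${\mathcal B}^{k,s,\lambda}_{i,1}$. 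Since $s\geq 1$, such a pair belongs in particular to $C^{2,0,1,0}_{E^i}(\mathcal{X}_T)\cap\mathcal{S}_{(A^{i-1})^*}\times C^{0,0,0,0}_{E^{i-1}}(\mathcal{X}_T)\cap\mathcal{D}_{A^{i-1}}$ and satisfies \eqref{eq.p.ort}; hence Theorem \ref{t.NS.unique} forces $(u_1,p_1)=(u_2,p_2)$, so that $v_1=u_1=u_2=v_2$.

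I do not expect any step here to present a serious difficulty, because the genuinely hard analysis --- the invertibility of the linearisation and the energy argument behind uniqueness --- has already been done, in Theorems \ref{t.invertible.psi} and \ref{t.NS.unique}. The only point calling for some care is the bookkeeping between the two scales of function spaces, the anisotropic H\"older spaces $C^{2s+k,\lambda,s,\frac{\lambda}{2}}_{E^i}(\mathcal{X}_T)$ and the refined spaces $\mathfrak{C}^{k,\mathbf{s}(s,\lambda,\lambda')}_{E^i}(\mathcal{X}_T)$: one must verify that the Fr\'echet derivative computed within the $\mathfrak{C}$-scale is again operator \eqref{eq.heat.pseudo.d2} there, so that Theorem \ref{t.invertible.psi} applies, and that the solutions furnished by Lemma \ref{l.reduce.psi.n} retain enough regularity to meet the hypotheses of the uniqueness theorem; both of these are secured by Remark \ref{r.mathfrak.C} and by the continuous embeddings of Theorems \ref{t.emb.hoelder} and \ref{t.emb.hoelder.t}.
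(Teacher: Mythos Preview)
Your proof is correct and follows essentially the same route as the paper: continuity from Lemma \ref{l.map.G}, identification of the Fr\'echet derivative with \eqref{eq.heat.pseudo.d2} and invocation of Theorem \ref{t.invertible.psi} for the Fredholm property, openness from the inverse (implicit) function theorem, and injectivity by converting two solutions of \eqref{eq.NS.lin.psi.n} into two solutions of \eqref{eq.NS.complex} via Lemma \ref{l.reduce.psi.n} and then applying Theorem \ref{t.NS.unique}. The paper packages the injectivity step as a separate Lemma \ref{l.unique.psi.n} and is less explicit about the verification $\pi^i f=f$, but otherwise the arguments coincide.
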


\begin{proof}
First we note that Lemma \ref{l.map.G} implies that the
operator $\varPsi_\mu^{(i,v)} \pi^i {\mathcal N} ^i $ maps the space 
$
  \mathfrak{C}^{k,\mathbf{s} (s,\lambda,\lambda')} _{E^{i}} (\mathcal{X}_T)
	 \cap \mathcal{D}_{A^{i}} \cap \mathcal{S}_{(A^{i-1})^*} 
$
continuously and compactly into itself. In particular, the mapping 
\eqref{eq.heat.pseudo.nn} is continuous. 
We now turn to the one-to-one property of the mapping \eqref{eq.heat.pseudo.nn}.

\begin{lemma}
\label{l.unique.psi.n}
Let $s, k \in \mathbb N$  and  $0 < \lambda < 1$.
If $F \in C^{2s+k,\lambda,s,\frac{\lambda}{2}} _{E^i} ({\mathcal X}_T)\cap
\mathcal{D}_{A^{i}} \cap \mathcal{S}_{(A^{i-1})^*}$
then \eqref{eq.NS.lin.psi.n} has no more than one solution in
$C^{2s+k,\lambda,s,\frac{\lambda}{2}} _{E^i} ({\mathcal X}_T)\cap
\mathcal{D}_{A^{i}} \cap \mathcal{S}_{(A^{i-1})^*}$.
\end{lemma}

\begin{proof}
Suppose that $v', v'' \in  C^{2s+k,\lambda,s,\frac{\lambda}{2}} _{E^i} ({\mathcal X}_T)\cap
\mathcal{D}_{A^{i}} \cap \mathcal{S}_{(A^{i-1})^*}$
are two solutions to \eqref{eq.NS.lin.psi.n}. Using 
Lemma \ref{l.bound.heat.initial.hoelder} we conclude that both $v'$ and $v''$ 
are solutions to 
\begin{equation*} 
   \left\{
   \begin{array}{rclll}
     L^i_{\mu} w +  \pi^i {\mathcal N}^i w  
   & =
   & L^i_{\mu}  F
   & \mbox{in}
   & \mathcal{X} \times (0,T) ,\\
	(A^{i-1})^*w & = & 0 & \mbox{in}
   & \mathcal{X} \times [0,T] ,
\\
     \gamma_0\, w (x)
   & =
   &  F (x,0) 
   & \mbox{on}
   & \mathcal{X}.
\end{array}
\right.
\end{equation*}

Now Lemma \ref{l.reduce.psi.n} implies that the pair $(v',p')$ and  $(v'',p'')$
are solutions to \eqref{eq.NS.complex} with $f= L_\mu^i F$, $u_0 (x) = F(x,0)$, 
$x \in \mathcal X$ where $u'=v'$, $u''=v''$ and 
\begin{equation*} 
   p' = \varPhi_{i-1}  (I-\pi^i) \Big(f - {\mathcal N}^i v'  \Big), \,\, 
	 p'' = \varPhi_{i-1}  (I-\pi^i) \Big(f - {\mathcal N}^i v''  \Big).
\end{equation*}
Thus, by the uniqueness of Theorem \ref{t.NS.unique}, we get $v' = v''$, $p' = p''$.
\end{proof}

Lemma \ref{l.unique.psi.n} implies immediately that the mapping in \eqref{eq.heat.pseudo.nn} 
is actually one-to-one. Now Theorem \ref{t.invertible.psi} shows that the Frech\'et derivative
$ (I + \varPsi_\mu ^i \pi^i {\mathcal N}^i)'_{|u^{(0)}}
$ of the mapping $(I + \varPsi_\mu ^i \pi^i {\mathcal N}^i)$ at an arbitrary point
$u ^{(0)} \in
   \mathfrak{C}^{k,\mathbf{s} (s,\lambda,\lambda')} _{E^i} ({\mathcal X}_T)\cap 
\mathcal{D}_{A^{i}} \cap \mathcal{S}_{(A^{i-1})^*}$ 
is a continuously invertible selfmapping of the space
$ \mathfrak{C}^{k,\mathbf{s} (s,\lambda,\lambda')} _{E^i} ({\mathcal X}_T) \cap 
\mathcal{D}_{A^{i}} \cap \mathcal{S}_{(A^{i-1})^*}$. In particular, 
the mapping \eqref{eq.heat.pseudo.nn} is Fredholm one. 
Then the  openness of \eqref{eq.heat.pseudo.nn} follows from the Implicit Mapping 
Theorem in Banach spaces, see for instance Theorem 5.2.3 of \cite[p.~101]{Ham82}.
\end{proof}

When combined with Lemma \ref{l.reduce.psi.n}, Theorem \ref{t.OpenMap.psi} implies that
the Navier-Stokes equations induce an open mapping in the function spaces under
consideration.

\begin{corollary}
\label{c.open.NS.short}
Let $s,k \in \mathbb N$,  $0 < \lambda < \lambda' < 1$. 
Then \eqref{eq.NS.complex} 
	induces continuous nonlinear open injective mapping ${\mathfrak A}^i$ 
from  ${\mathfrak B}^{k,s,\lambda,\lambda'}_{i,1}$ 
to ${\mathfrak B}^{k,s,\lambda,\lambda'}_{i,2}$, see \eqref{eq.smoothness.sol}, 
\eqref{eq.smoothness.data}.
\end{corollary}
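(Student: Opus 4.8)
The plan is to exhibit $\mathfrak{A}^i$ as the map $(u,p)\mapsto(L_\mu^i u+\mathcal{N}^i(u)+A^{i-1}p,\,\gamma_0 u)$ carrying a pair to the datum it produces in \eqref{eq.NS.complex}, and then to read off its three properties from results already in hand: continuity from Lemmata \ref{l.diff.oper} and \ref{l.map.G} together with the elliptic regularity of \eqref{eq.ellcomp}, injectivity from the Uniqueness Theorem \ref{t.NS.unique}, and openness from the open mapping Theorem \ref{t.OpenMap.psi} via the reduction Lemma \ref{l.reduce.psi.n}. Continuity of $\mathfrak{A}^i:\mathfrak{B}^{k,s,\lambda,\lambda'}_{i,1}\to\mathfrak{B}^{k,s,\lambda,\lambda'}_{i,2}$ is the routine part: $L_\mu^i u$ is handled by Lemma \ref{l.diff.oper}, the nonlinear summand $\mathcal{N}^i(u)$ by Lemma \ref{l.map.G} (operator \eqref{eq.N.comp}), the summand $A^{i-1}p$ by $A^i\circ A^{i-1}\equiv0$ and Lemma \ref{l.diff.oper}, while by \eqref{eq.Hodge.4}, \eqref{eq.Hodge.5} and Theorem \ref{t.Hoelder.Laplace} one has $\gamma_0 u=(A^i)^*\varphi^{i+1}A^i\gamma_0 u\in C^{2s+k+2,\lambda}_{E^i}(\mathcal{X})\cap\mathcal{S}_{(A^{i-1})^*}$, exactly as in the proof of Corollary \ref{c.NS.deriv.unique}.

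Injectivity will be immediate. If $\mathfrak{A}^i(u',p')=\mathfrak{A}^i(u'',p'')$, then $(u',p')$ and $(u'',p'')$ solve \eqref{eq.NS.complex} with one and the same datum; both are of class at least $C^{2,0,1,0}_{E^i}(\mathcal{X}_T)\cap\mathcal{S}_{(A^{i-1})^*}\times C^{0,0,0,0}_{E^{i-1}}(\mathcal{X}_T)\cap\mathcal{D}_{A^{i-1}}$ by the definition of the $\mathfrak{C}$-spaces and Theorem \ref{t.emb.hoelder.t}, and $p'$, $p''$ both satisfy the normalisation \eqref{eq.p.ort} since it is built into $\mathfrak{B}^{k,s,\lambda,\lambda'}_{i,1}$ through the summand $\ominus C^{s,\frac{\lambda'}{2}}([0,T],\mathcal{H}^{i-1})$. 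Hence Theorem \ref{t.NS.unique} forces $(u',p')=(u'',p'')$.

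For openness, fix $(u_*,p_*)\in\mathfrak{B}^{k,s,\lambda,\lambda'}_{i,1}$, a neighbourhood $U$ of it, and put $(f_*,u_{0*})=\mathfrak{A}^i(u_*,p_*)$. By Lemma \ref{l.reduce.psi.n}, $u_*$ solves the pseudo-differential equation \eqref{eq.NS.lin.psi.n} with datum $F_*=\varPsi_\mu^i(\pi^i f_*,u_{0*})$, i.e. $(I+\varPsi_\mu^{(i,v)}\pi^i\mathcal{N}^i)(u_*)=F_*$, and $p_*$ is recovered from $u_*$ by \eqref{eq.p.lin.n}. The data map $(f,u_0)\mapsto\varPsi_\mu^i(\pi^i f,u_0)$ is bounded linear (Lemma \ref{l.map.V0}), whereas $I+\varPsi_\mu^{(i,v)}\pi^i\mathcal{N}^i$ is continuous, injective and open by Theorem \ref{t.OpenMap.psi}, hence a homeomorphism of $\mathfrak{C}^{k,\mathbf{s}(s,\lambda,\lambda')}_{E^i}(\mathcal{X}_T)\cap\mathcal{D}_{A^i}\cap\mathcal{S}_{(A^{i-1})^*}$ onto an open subset of itself. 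Consequently, for every $(f,u_0)$ close to $(f_*,u_{0*})$ there is a unique $v$ close to $u_*$ solving \eqref{eq.NS.lin.psi.n} with datum $\varPsi_\mu^i(\pi^i f,u_0)$, depending continuously on $(f,u_0)$; setting $p=\varPhi_{i-1}(I-\pi^i)(f-\mathcal{N}^i v)$ and invoking Lemma \ref{l.reduce.psi.n} again, the pair $(v,p)$ belongs to $\mathfrak{B}^{k,s,\lambda,\lambda'}_{i,1}$, depends continuously on $(f,u_0)$, lies in $U$ when $(f,u_0)$ is close enough to $(f_*,u_{0*})$, and satisfies $\mathfrak{A}^i(v,p)=(f,u_0)$. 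Thus $\mathfrak{A}^i(U)$ contains a neighbourhood of $(f_*,u_{0*})$, establishing openness. Equivalently, the Fr\'echet derivative of $\mathfrak{A}^i$ at $(u^{(0)},p^{(0)})$ --- the linear operator $(u,p)\mapsto(L_\mu^i u+(\mathcal{N}^i)'_{|u^{(0)}}u+A^{i-1}p,\,\gamma_0 u)$, by Lemmata \ref{l.N.der} and \ref{l.map.V0} --- is the isomorphism $\mathfrak{A}^i_{\rm lin}$ of Corollary \ref{c.NS.deriv.unique}, so openness also follows from the Inverse Mapping Theorem in Banach spaces, e.g. Theorem 5.2.3 of \cite[p.~101]{Ham82}.

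The step I expect to be the main obstacle is not any of the invertibility facts --- these are supplied by Theorems \ref{t.OpenMap.psi} and \ref{t.invertible.psi} and Corollary \ref{c.NS.deriv.unique} --- but the bookkeeping of the pressure component along the reduction: checking that $f-L_\mu^i v-\mathcal{N}^i v$ is $A^{i-1}$-exact and $(A^{i-2})^*$-closed, that $\varPhi_{i-1}$ returns $p$ in $\mathfrak{C}^{k,\mathbf{s}(s-1,\lambda,\lambda')}_{E^{i-1}}(\mathcal{X}_T)\cap\mathcal{D}_{A^{i-1}}\cap\mathcal{S}_{(A^{i-2})^*}$ with the normalisation \eqref{eq.p.ort} intact (this uses Lemma \ref{l.Phi.t}, Corollary \ref{c.Hoelder.d.t} and $(A^{i-1})^*\circ(A^i)^*\equiv0$), and that every map involved respects the two-norm scale $\mathfrak{C}^{k,\mathbf{s}(\cdot,\lambda,\lambda')}$ --- in particular that the compactness furnished by Corollary \ref{c.mathfrak.compact} is precisely what renders $\varPsi_\mu^{(i,v)}\pi^i\mathcal{N}^i$ compact, so that $I+\varPsi_\mu^{(i,v)}\pi^i\mathcal{N}^i$ is a Fredholm map of index zero to begin with. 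Everything else reduces to the continuity lemmata \ref{l.diff.oper}, \ref{l.map.G}, the uniqueness theorem \ref{t.NS.unique}, and the openness of the pseudo-differential map in Theorem \ref{t.OpenMap.psi}.
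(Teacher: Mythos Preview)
Your argument is essentially the paper's own: continuity via the same lemmata (including the $\gamma_0$ step from the proof of Corollary~\ref{c.NS.deriv.unique}), injectivity from Theorem~\ref{t.NS.unique}, and openness by reducing to \eqref{eq.NS.lin.psi.n} through Lemma~\ref{l.reduce.psi.n} and invoking Theorem~\ref{t.OpenMap.psi}. Your alternative route to openness via the Fr\'echet derivative and Corollary~\ref{c.NS.deriv.unique} is a legitimate shortcut the paper does not spell out here, but it is in the same spirit.
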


\begin{proof} As operators \eqref{eq.Ai.bound}, \eqref{eq.Li.bound}, 
\eqref{eq.gamma.bound} \eqref{eq.N.comp} are continuous we conclude that  
the mapping ${\mathfrak A}^i$, 
induced by \eqref{eq.NS.complex}, maps  ${\mathfrak B}^{k,s,\lambda,\lambda'}_{i,1}$ 
 continuously to the space ${\mathfrak B}^{k,s,\lambda,\lambda'}_{i,2}$. 

It is left to prove that for any pair $(u^{(0)},p^{(0)})$ from 
${\mathfrak B}^{k,s,\lambda,\lambda'}_{i,1}$, there is $\delta > 0$ with the property 
that for all data $(f,u_0)$ from ${\mathfrak B}^{k,s,\lambda,\lambda'}_{i,2}$, 
satisfying the estimate
\begin{equation}
\label{eq.NS.open.est}
\| (f,u_0) - {\mathfrak A}^i(u^{(0)},p^{(0)}) 
   \|_{{\mathfrak B}^{k,s,\lambda,\lambda'}_{i,2}} < \delta, 
\end{equation}
nonlinear equations \eqref{eq.NS.complex} 
has a unique solution $(u,p)$ in ${\mathfrak B}^{k,s,\lambda,\lambda'}_{i,1}$. 

By Lemma Lemma \ref{l.map.V0}, 
the parabolic potential $\varPsi_{\mu}^i$ induces the bounded linear operator
\begin{equation}
\label{eq.oper.psi}
   \mathfrak{C}^{k,\mathbf{s} (s-1,\lambda,\lambda')}_{E^i} (\mathcal{X}_T) 
	\cap \mathcal{D}_{A^{i}} \times 
	C^{2s+k+2,\lambda} _{E^i} (\mathcal{X})
 \to
   \mathfrak{C}^{k,\mathbf{s} (s,\lambda,\lambda')} _{E^i}
	(\mathcal{X}_T) \cap \mathcal{D}_{A^{i}} .
\end{equation}

We now apply Lemma \ref{l.reduce.psi.n} to see that  $ u^{(0)}$ 
is a solution to the operator equation
\begin{equation*}
   (I + \varPsi_\mu^i \pi^i \mathcal{N}^i) u^{(0)} = g_0^{(0)}, 
\end{equation*}
with the right-hand side 
$g_0^{(0)} := \varPsi_{\mu}^i  ((L_{\mu}^i +
	+ \pi^i \mathcal{N}^i )u^{(0)}, \gamma_0 u^{(0)})$,
belonging to the space
   $ \mathfrak{C}^{k,\mathbf{s} (s,\lambda,\lambda')} _{E^i}
	(\mathcal{X}_T) \cap \mathcal{D}_{A^{i}} \cap \mathcal{S}_{(A^{i-1})^*} $.
Set $ g_0 = \varPsi_{\mu}^i (\pi ^i f,  u_0 )$
which belongs to
$  \mathfrak{C}^{k,\mathbf{s} (s,\lambda,\lambda')} _{E^i}
	(\mathcal{X}_T) \cap \mathcal{D}_{A^{i}} \cap \mathcal{S}_{(A^{i-1})^*}$ 
by Lemmata  \ref{l.heat.key1} and \ref{l.map.V0.bound}. 

An immediate calculations shows that
\begin{equation*}
   \| g_0 - g_0^{(0)}  \|_{\mathfrak{C}^{k,\mathbf{s} (s,\lambda,\lambda')}_{E^i} 
(\mathcal{X}_T)\cap \mathcal{D}_{A^{i}}  }
\leq \| \varPsi_{\mu}^i \| | (f,u_0) - {\mathfrak A}^i(u^{(0)},p^{(0)}) 
   \|_{{\mathfrak B}^{k,s,\lambda,\lambda'}_{i,2}}
\end{equation*}
where $\| \varPsi_{\mu} ^i\|$ is the norm of bounded linear operator \eqref{eq.oper.psi}.

If $\delta > 0$ in the estimate \eqref{eq.NS.open.est} is small enough, then
Theorem \ref{t.OpenMap.psi} shows that there is a unique solution
$   v \in \mathfrak{C}^{k,\mathbf{s} (s,\lambda,\lambda')}_{E^i} 
(\mathcal{X}_T)\cap \mathcal{D}_{A^{i}} \cap \mathcal{S}_{(A^{i-1})^*}$
to the operator equation 
$   v + \varPsi_{\mu}^i \pi^i \mathcal{N}^i v = g_0$. 
The pair
 $ u= v$, $   p  =
   \varPhi_{i-1} (I-\pi^i) \left( f - 
	{\mathcal N}^i u \right)$ 
belongs to ${\mathfrak B}^{k,s,\lambda,\lambda'}_{i,1}$ 
 satisfies nonlinear equations \eqref{eq.NS.complex},
   which is due to Lemma \ref{l.reduce.psi.n}.
Finally, the uniqueness of the solution $(u,p)$, 
follows from Theorem \ref{t.NS.unique}.
\end{proof}

We would like to spread the results to the case of infinity 
differentiable sections. With this purpose, let us prove a theorem on the improvement 
of the regularity for solutions to operator equation 
\eqref{eq.NS.lin.psi.n}.

\begin{theorem} \label{t.imp.reg}
Let  $s,k\in \mathbb N$, $0<\lambda\leq \lambda'<1$.
If $v \in {C}^{3,\lambda,1,\frac{\lambda}{2}}_{E^i} 
(\mathcal{X}_T ) \cap \mathcal{D}_{A^{i}} \cap \mathcal{S}_{(A^{i-1})^*}$ is 
a solution to \eqref{eq.NS.lin.psi.n} 
with datum $F\in {C}^{2s+k,\lambda,s,\frac{\lambda}{2}}_{E^i} 
(\mathcal{X}_T )  \cap \mathcal{D}_{A^{i}} \cap \mathcal{S}_{(A^{i-1})^*} $ then 
$v\in {C}^{2s+k,\lambda,s,\frac{\lambda}{2}}_{E^i} 
(\mathcal{X}_T )  \cap \mathcal{D}_{A^{i}} \cap \mathcal{S}_{(A^{i-1})^*} $, too. 
\end{theorem}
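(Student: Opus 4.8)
The plan is a classical bootstrap built on the fixed-point identity $v = F - \varPsi_\mu^{(i,v)}\pi^i{\mathcal N}^i v$ furnished by \eqref{eq.NS.lin.psi.n}, together with the smoothing of the volume parabolic potential $\varPsi_\mu^{(i,v)}$. One may assume that $C^{2s+k,\lambda,s,\frac{\lambda}{2}}_{E^i}(\mathcal{X}_T)$ is strictly smaller than $C^{3,\lambda,1,\frac{\lambda}{2}}_{E^i}(\mathcal{X}_T)$ (otherwise there is nothing to prove), so that $2s+k\geq 4$; I would then produce $v$ in a finite chain of intermediate spaces of the anisotropic scale, ending in the target $C^{2s+k,\lambda,s,\frac{\lambda}{2}}_{E^i}(\mathcal{X}_T)\cap\mathcal{D}_{A^i}\cap\mathcal{S}_{(A^{i-1})^*}$.

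The point is that the operator $T:=\varPsi_\mu^{(i,v)}\pi^i{\mathcal N}^i$ enjoys two distinct gain estimates, both obtained as in the proof of Lemma \ref{l.map.G}. On the one hand, \eqref{eq.PsiN.bound} shows that $T$ maps $C^{2\sigma+\kappa,\lambda,\sigma,\frac{\lambda}{2}}_{E^i}(\mathcal{X}_T)\cap\mathcal{D}_{A^i}$ continuously into $C^{2(\sigma+1)+\kappa-1,\lambda,\sigma+1,\frac{\lambda}{2}}_{E^i}(\mathcal{X}_T)\cap\mathcal{D}_{A^i}\cap\mathcal{S}_{(A^{i-1})^*}$ for all $\sigma,\kappa\in\mathbb N$, so the total spatial order grows by one while the parabolic order grows by one (and the ``$k$-index'' drops by one). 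On the other hand, composing the parabolic potential of Lemma \ref{l.heat.key1} with the plain bound \eqref{eq.N.bound} for ${\mathcal N}^i$ and the projection \eqref{eq.proj.cont.t} (rather than using the sharper parabolic gain), together with the elementary inclusion $C^{2\sigma+\kappa,\lambda,\sigma,\frac{\lambda}{2}}_{E^i}(\mathcal{X}_T)\hookrightarrow C^{2(\sigma-1)+(\kappa+2),\lambda,\sigma-1,\frac{\lambda}{2}}_{E^i}(\mathcal{X}_T)$ which follows directly from the definition of the scale, one sees that $T$ also maps $C^{2\sigma+\kappa,\lambda,\sigma,\frac{\lambda}{2}}_{E^i}(\mathcal{X}_T)\cap\mathcal{D}_{A^i}$ continuously into $C^{2\sigma+(\kappa+1),\lambda,\sigma,\frac{\lambda}{2}}_{E^i}(\mathcal{X}_T)\cap\mathcal{D}_{A^i}\cap\mathcal{S}_{(A^{i-1})^*}$, so now the total spatial order grows by one but the parabolic order is unchanged; the compatibility of $\varPsi_\mu^{(i,v)}$, $\pi^i$ with $A^i$ and $(A^{i-1})^*$ needed for the domain and constraint decorations of this second estimate is exactly as in the proofs of Lemmata \ref{l.map.V0.bound}, \ref{l.N.cont}, \ref{l.Phi.t}. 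Iterating the first estimate alone stalls after a single step, because the ``$k$-index'' then reaches $0$; this is why both moves are used.

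With these two moves in hand the argument runs as follows. Starting from $v\in C^{3,\lambda,1,\frac{\lambda}{2}}_{E^i}(\mathcal{X}_T)\cap\mathcal{D}_{A^i}\cap\mathcal{S}_{(A^{i-1})^*}$ (the pair $(\sigma,\kappa)=(1,1)$), first apply the second move $s+k-2$ times, so that $v$ runs through $C^{\kappa+2,\lambda,1,\frac{\lambda}{2}}_{E^i}(\mathcal{X}_T)\cap\mathcal{D}_{A^i}\cap\mathcal{S}_{(A^{i-1})^*}$ for $\kappa=2,\dots,s+k-1$ (the parabolic order frozen at $1$); then apply the first move $s-1$ times, so that $v$ runs through $C^{s+k+\sigma,\lambda,\sigma,\frac{\lambda}{2}}_{E^i}(\mathcal{X}_T)\cap\mathcal{D}_{A^i}\cap\mathcal{S}_{(A^{i-1})^*}$ for $\sigma=2,\dots,s$, the last of which is exactly the target. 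At each of these $2s+k-3$ steps the improved regularity of $v$ is read off from $v=F-Tv$: the term $Tv$ lands in the next space of the chain by the gain estimate just applied, while $F$ already belongs to that (lower) space since, by Theorems \ref{t.emb.hoelder} and \ref{t.emb.hoelder.t} and the elementary inclusion above, $C^{2s+k,\lambda,s,\frac{\lambda}{2}}_{E^i}(\mathcal{X}_T)$ embeds continuously into each intermediate space and the conditions $F\in\mathcal{D}_{A^i}$, $F\in\mathcal{S}_{(A^{i-1})^*}$ survive those embeddings. A direct inspection shows that along this chain the ``$k$-indices'' of all spaces stay non-negative, and in fact $\geq 1$ for every input to which the first move is applied, so that every space in the chain is a legitimate member of the scale.

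The main --- and essentially the only --- obstacle is this index bookkeeping: one must organise the moves in the two-phase order above rather than naively iterate \eqref{eq.PsiN.bound}, and one must check at every step that the domain condition $\mathcal{D}_{A^i}$ and the constraint $\mathcal{S}_{(A^{i-1})^*}$ are carried along. The analytic substance --- elliptic regularity for $\pi^i$, the smoothing of the heat potential, and the bilinear estimates for ${\mathcal N}^i$ --- is entirely contained in Lemmata \ref{l.N.cont}, \ref{l.Helmholtz}, \ref{l.heat.key1} and \ref{l.map.G}, so no new estimates are needed. After the $2s+k-3$ steps we arrive at $v\in C^{2s+k,\lambda,s,\frac{\lambda}{2}}_{E^i}(\mathcal{X}_T)\cap\mathcal{D}_{A^i}\cap\mathcal{S}_{(A^{i-1})^*}$, as claimed.
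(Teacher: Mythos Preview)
Your proposal is correct and follows essentially the same bootstrap argument as the paper: both use the smoothing estimate \eqref{eq.PsiN.bound} from Lemma \ref{l.map.G} together with the fixed-point identity $v=F-\varPsi_\mu^{(i,v)}\pi^i\mathcal{N}^i v$ to climb the anisotropic scale. The only difference is organizational: the paper first treats the case $s=1$ (your Phase~1, gaining in the $k$-index at frozen parabolic order) and then inducts on $s$ (which, when unrolled, is exactly your Phase~2), whereas you present the argument as an explicit two-phase chain of $2s+k-3$ steps with the index bookkeeping spelled out; the moves and the lemmata invoked are the same.
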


\begin{proof} For $s=k=1$ there is nothing to prove. 

Let first $s=1$ and $k>1$. According to  Lemma \ref{l.map.G}, the potential 
$\varPsi_{\mu}^i \pi ^i {\mathcal N}^i v $ belongs to the space 
${C}^{4,\lambda,2,\frac{\lambda}{2}}_{E^i} 
(\mathcal{X}_T ) \cap \mathcal{D}_{A^{i}} \cap \mathcal{S}_{(A^{i-1})^*}$. 
Again it follows from  \eqref{eq.NS.lin.psi.n}  that 
the section $v$ belongs to ${C}^{4,\lambda,1,\frac{\lambda}{2}}_{E^i} 
(\mathcal{X}_T ) \cap \mathcal{D}_{A^{i}} \cap \mathcal{S}_{(A^{i-1})^*}$. 

If $k=2$ then the statement of the theorem is proved for $s=1$. 
If $k>2$ then we may repeat the arguments. Indeed, 
 according to  Lemma \ref{l.map.G}, the potential 
$\varPsi_{\mu}^i \pi ^i {\mathcal N}^i v $ belongs to the space 
${C}^{5,\lambda,2,\frac{\lambda}{2}}_{E^i} 
(\mathcal{X}_T ) \cap \mathcal{D}_{A^{i}} \cap \mathcal{S}_{(A^{i-1})^*}$. 
It follows from  \eqref{eq.NS.lin.psi.n}  that 
the section $v$ belongs to ${C}^{5,\lambda,1,\frac{\lambda}{2}}_{E^i} 
(\mathcal{X}_T ) \cap \mathcal{D}_{A^{i}} \cap \mathcal{S}_{(A^{i-1})^*}$. 
Repeating the arguments $(k-1)$ times , we see that 
the section $v$ belongs actually 
to ${C}^{2+k,\lambda,1,\frac{\lambda}{2}}_{E^i} 
(\mathcal{X}_T ) \cap \mathcal{D}_{A^{i}} \cap \mathcal{S}_{(A^{i-1})^*}$. 
This proves the theorem for $s=1$ and $k>1$. 
  
Now we may finish the prove by the induction with respect to $s$. Assume 
that the statement of the theorem holds true for $s=s'$. Let us show that 
it is valid for $s=s'+1$, too. Indeed, let the datum $F$ belong to 
$C^{2(s'+1)+k,\lambda,s'+1,\frac{\lambda}{2}}_{E^i} 
(\mathcal{X}_T )  \cap \mathcal{D}_{A^{i}} \cap \mathcal{S}_{(A^{i-1})^*}$. 
By the Embedding Theorem \ref{t.emb.hoelder.t}
the solution $v$ belongs to the space 
$C^{2s'+k+2,\lambda,s',\frac{\lambda}{2}}_{E^i} 
(\mathcal{X}_T )  \cap \mathcal{D}_{A^{i}} \cap \mathcal{S}_{(A^{i-1})^*}$. 
Then, according to the inductive  assumption, the solution 
$v$ to \eqref{eq.NS.lin.psi.n} belongs to  
$C^{2s'+k+2,\lambda,s',\frac{\lambda}{2}}_{E^i} 
(\mathcal{X}_T )  \cap \mathcal{D}_{A^{i}} \cap \mathcal{S}_{(A^{i-1})^*}$, too. 
Now it follows from  Lemma \ref{l.map.G} that  the potential 
$\varPsi_{\mu}^i \pi ^i {\mathcal N}^i v $ belongs to the space 
${C}^{2(s'+1)+k+1,\lambda,s'+1,\frac{\lambda}{2}}_{E^i} 
(\mathcal{X}_T ) \cap \mathcal{D}_{A^{i}} \cap \mathcal{S}_{(A^{i-1})^*}$ and  
then \eqref{eq.NS.lin.psi.n}  implies that 
$v$ belongs to ${C}^{2(s'+1)+k,\lambda,s'+1,\frac{\lambda}{2}}_{E^i} 
(\mathcal{X}_T ) \cap \mathcal{D}_{A^{i}} \cap \mathcal{S}_{(A^{i-1})^*}$, 
which was to be proved. 
\end{proof}

Next,  fix a real number $\lambda' \in (0,1)$   
and consider the Fr\'echet spaces $ {C}^{\infty} _{E^i}(\mathcal{X}_T)$ and 
$ {C}^{\infty} _{E^i}(\mathcal{X})$   
endowed with the family of the semi-norms 
\begin{equation*}
\Big\{
\|\cdot\|_{\mathfrak{C}^{ \mathbf{s} (s,\lambda'/2,\lambda')} _{E^i}(\mathcal{X}_T)} 
\Big\}_{s\in \mathbb{Z}_+} \,\,
 \Big\{ \|\cdot\|_{C^{s,\lambda'}_{E^i} 
(\mathcal{X})} \Big\}_{s\in \mathbb{Z}_+},
\end{equation*}
respectively. Embedding Theorem \ref{t.emb.hoelder} and \ref{t.emb.hoelder.t} imply 
that the H\"older semi-norms on these spaces 
may be replaced by the standard families of the semi-norms  
\begin{equation*}
\Big\{\|\cdot\|_{C^{2s,0,s,0} _{E^i}
(\mathcal{X}_T)} \Big\}_{s\in \mathbb{Z}_+}, \quad 
\Big\{ \|\cdot\|_{C^{s,0} _{E^i}
(\mathcal{X})} \Big\}_{s\in \mathbb{Z}_+},
\end{equation*}
respectively, defining the same topologies (see, for instance, 
\cite[Chapter II, Example 1.1.4, p. 134]{Ham82}). 

\begin{corollary}
\label{c.OpenMap.psi}
The mapping 
\begin{equation}
\label{eq.heat.pseudo.nn.infty2}
   I \! + \! \varPsi_{\mu}^i \pi ^i{\mathcal N}^i :
   {C}^{\infty} _{E^i}(\mathcal{X}_T)  \cap \mathcal{S}_{(A^{i-1})^\ast}
\to  {C}^{\infty} _{E^i}(\mathcal{X}_T )\cap \mathcal{S}_{(A^{i-1})^\ast} 
\end{equation}
is injective and open. Moreover, its range is connected.
\end{corollary}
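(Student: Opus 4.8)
The plan is to deduce the corollary from the Banach‑space statement of Theorem~\ref{t.OpenMap.psi} by passing to the projective limit, with the regularity improvement of Theorem~\ref{t.imp.reg} serving as the bridge between the scales. Write $T = I + \varPsi_\mu^{(i,v)} \pi^i {\mathcal N}^i$ and, for $s \in {\mathbb N}$, put
\[
  X_s := \mathfrak{C}^{\mathbf{s}(s,\lambda'/2,\lambda')}_{E^i} ({\mathcal X}_T)
         \cap \mathcal{D}_{A^i} \cap \mathcal{S}_{(A^{i-1})^*}.
\]
By Lemma~\ref{l.map.G} (cf.\ \eqref{eq.PsiN.comp}) the map $T$ restricts to a continuous self‑map of each $X_s$, and by Theorem~\ref{t.OpenMap.psi} this restriction is injective and open. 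Since $A^i$ preserves smoothness, $C^\infty_{E^i}({\mathcal X}_T) \cap \mathcal{S}_{(A^{i-1})^*} = \bigcap_{s} X_s$, the Fr\'echet topology on the left‑hand side being generated by the (increasing, after the usual normalisation) family of semi‑norms $\{ \|\cdot\|_{X_s} \}_{s}$; consequently $T$ is a continuous self‑map of this Fr\'echet space whose range is contained in it.

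First I would dispatch the two easy claims. Injectivity: if $u', u'' \in C^\infty_{E^i}({\mathcal X}_T) \cap \mathcal{S}_{(A^{i-1})^*}$ satisfy $T u' = T u''$, then the equality already holds in $X_1$, where $T$ is one‑to‑one by Theorem~\ref{t.OpenMap.psi}, so $u' = u''$. Connectedness of the range: $C^\infty_{E^i}({\mathcal X}_T) \cap \mathcal{S}_{(A^{i-1})^*}$ is a linear subspace of a Fr\'echet space, hence path‑connected, and $T$ is continuous, so its image is connected.

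For openness I would argue as follows. Let $U$ be open in $C^\infty_{E^i}({\mathcal X}_T) \cap \mathcal{S}_{(A^{i-1})^*}$, let $u^{(0)} \in U$ and set $g^{(0)} = T u^{(0)}$. As the semi‑norms are increasing, there are $s_0 \in {\mathbb N}$ (say $s_0 \geq 1$) and $\varepsilon_0 > 0$ with $\{ u : \| u - u^{(0)} \|_{X_{s_0}} < \varepsilon_0 \} \subseteq U$; write $B = \{ w \in X_{s_0} : \| w - u^{(0)} \|_{X_{s_0}} < \varepsilon_0 \}$. By Theorem~\ref{t.OpenMap.psi} applied on $X_{s_0}$, the set $T(B)$ is open in $X_{s_0}$ and contains $g^{(0)}$, so there is $\delta_0 > 0$ with $\{ h \in X_{s_0} : \| h - g^{(0)} \|_{X_{s_0}} < \delta_0 \} \subseteq T(B)$. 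Now take any $g \in C^\infty_{E^i}({\mathcal X}_T) \cap \mathcal{S}_{(A^{i-1})^*}$ with $\| g - g^{(0)} \|_{X_{s_0}} < \delta_0$; then $g = T w$ for some $w \in B \subseteq X_{s_0}$, and $w$ solves \eqref{eq.NS.lin.psi.n} with datum $g$. Since $X_{s_0}$ embeds, by Embedding Theorem~\ref{t.emb.hoelder.t}, into $C^{3,\lambda'/2,1,\lambda'/4}_{E^i}({\mathcal X}_T) \cap \mathcal{D}_{A^i} \cap \mathcal{S}_{(A^{i-1})^*}$, while $g \in C^{2s+k,\lambda'/2,s,\lambda'/4}_{E^i}({\mathcal X}_T)$ for all $s,k$, the regularity improvement of Theorem~\ref{t.imp.reg} (with $\lambda = \lambda'/2$) forces $w \in C^{2s+k,\lambda'/2,s,\lambda'/4}_{E^i}({\mathcal X}_T) \cap \mathcal{D}_{A^i} \cap \mathcal{S}_{(A^{i-1})^*}$ for all $s,k$, i.e.\ $w \in C^\infty_{E^i}({\mathcal X}_T) \cap \mathcal{S}_{(A^{i-1})^*}$. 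Finally $w \in B$ gives $\| w - u^{(0)} \|_{X_{s_0}} < \varepsilon_0$, hence $w \in U$ and $g = T w \in T(U)$. Thus $T(U)$ contains a $\|\cdot\|_{X_{s_0}}$‑ball around each of its points, so it is open; in particular the range, being $T$ of the whole space, is open and connected, completing the proof.

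I expect the only genuine difficulty to be the verification that Theorem~\ref{t.imp.reg}, stated on the scale $C^{2s+k,\lambda,s,\lambda/2}$, really applies to a solution known a priori only to lie in the intersection space $X_{s_0} = \mathfrak{C}^{\mathbf{s}(s_0,\lambda'/2,\lambda')}_{E^i}({\mathcal X}_T) \cap \mathcal{D}_{A^i} \cap \mathcal{S}_{(A^{i-1})^*}$ — that is, checking that the embedding hypothesis of the bootstrap is met and that $\mathfrak{C}$‑regularity (along both H\"older exponents) is recovered once $C^{2s+k,\lambda,s,\lambda/2}$‑regularity is, which is precisely what Remark~\ref{r.mathfrak.C} together with Theorems~\ref{t.emb.hoelder} and \ref{t.emb.hoelder.t} are there to supply. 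The remaining ingredients — monotonicity of the semi‑norm family, continuity and injectivity of $T$ at each Banach level, and path‑connectedness of a linear subspace — are routine.
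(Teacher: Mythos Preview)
Your proof is correct and follows essentially the same route as the paper's: both invoke Theorem~\ref{t.OpenMap.psi} at a fixed finite level (the paper takes $s=k=1$, you allow a general $s_0$) to produce a solution in the corresponding Banach space, and then use the regularity bootstrap Theorem~\ref{t.imp.reg} to show that a smooth right-hand side forces the solution to be smooth; connectedness of the range is derived in both cases from continuity of $T$ on a connected (linear) domain. Your version is somewhat more complete --- you prove openness of $T(U)$ for an arbitrary open $U$ and dispatch injectivity explicitly via Theorem~\ref{t.OpenMap.psi}, whereas the paper's proof as written only exhibits a neighbourhood of each range point inside the range --- but the key ideas are identical, and the concern you flag about applying Theorem~\ref{t.imp.reg} from the $\mathfrak{C}$-scale is handled exactly as you indicate, by the embedding into $C^{3,\lambda,1,\lambda/2}$.
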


\begin{proof} 
Let $v^{(0)}  \in {C}^{\infty} _{E^i}(\mathcal{X}_T )\cap \mathcal{S}_{(A^{i-1})^\ast} $ 
be an element belonging to the range of the map \eqref{eq.heat.pseudo.nn.infty2}.

Fix $\lambda'\in (0,1)$. 
It follows from Theorem \ref{t.OpenMap.psi}  that there exists a number $\delta>0$ such 
that section 
$v^{(1)} \in {C}^{\infty} _{E^i}(\mathcal{X}_T )\cap \mathcal{S}_{(A^{i-1})^\ast}$
 satisfying 
\begin{equation} \label{eq.open.ineq}
\|v^{(1)}-v^{(0)}\|_{\mathfrak{C}^{1,\mathbf{s} (1,\lambda'/2,\lambda')}_{E^i} 
(\mathcal{X}_T)} <\delta,
\end{equation}
belongs to the range of the map \eqref{eq.heat.pseudo.nn} with $s=k=1$, i.e. 
there is a section $v \in \mathfrak{C}^{1,\mathbf{s} (1,\lambda'/2,\lambda')}
 _{E^i}(\mathcal{X}_T )$ satisfying 
\begin{equation}
\label{eq.heat.pseudo.nA}
(I  +  \varPsi_{\mu}^i\pi^i \mathcal{N}^i) v= v^{(1)}.
\end{equation}
Therefore $v \in {C}^{\infty} _{E^i}(\mathcal{X}_T)  \cap \mathcal{S}_{(A^{i-1})^\ast}$
because of the Regularity Theorem \ref{t.imp.reg}.

Finally, as the space ${C}^{\infty} _{E^i}(\mathcal{X}_T)  \cap \mathcal{S}_{(A^{i-1})^\ast}$ 
is connected. the range is connected, too,  because the map is continuous, 
which was to be proved. 
\end{proof}

\begin{corollary}  \label{c.OM.infty}
Equations \eqref{eq.NS.complex} induce a continuous, injective  mapping ${\mathfrak A}^i$
 from $ C^\infty _{E^{i}} (\mathcal{X}_T) \cap 
\mathcal{S}_{(A^{i-1})^\ast} 
\times  C^\infty _{E^{i-1}} (\mathcal{X}_T) \cap 
\mathcal{S}_{(A^{i-2})^\ast}  \ominus C^\infty([0,T],\mathcal{H}^{i-1})$ to the space 
$C^\infty _{E^{i}} (\mathcal{X}_T) 
\times C^\infty _{E^{i}} (\mathcal{X}) \cap \mathcal{S}_{(A^{i-1})^\ast}$. 
The range of this mapping is open and connected.
\end{corollary}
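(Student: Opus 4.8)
The plan is to transfer Corollaries \ref{c.open.NS.short} and \ref{c.OpenMap.psi} to the Fr\'echet setting, viewing (for the fixed $\lambda'$) $C^\infty_{E^i}(\mathcal{X}_T)$ as the projective limit of the Banach spaces $\mathfrak{C}^{k,\mathbf{s}(s,\lambda'/2,\lambda')}_{E^i}(\mathcal{X}_T)$ and $C^\infty_{E^i}(\mathcal{X})$ as the limit of the $C^{s,\lambda'}_{E^i}(\mathcal{X})$, as fixed just before the statement. The mapping $\mathfrak{A}^i$ sends $(u,p)$ to $\big(L^i_\mu u + \mathcal{N}^i(u) + A^{i-1}p,\ \gamma_0 u\big)$. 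First I would record continuity: on each Banach level $L^i_\mu$ and $A^{i-1}$ act continuously by Lemma \ref{l.diff.oper}, $\gamma_0$ by Lemma \ref{l.bound.trace.holder}, and $\mathcal{N}^i$ by Lemma \ref{l.map.G}; since these maps are compatible with the defining seminorm families, $\mathfrak{A}^i$ is continuous between the two Fr\'echet spaces, and one checks as in Corollary \ref{c.open.NS.short} that it indeed maps into the stated space $C^\infty_{E^i}(\mathcal{X}_T)\times C^\infty_{E^i}(\mathcal{X})\cap\mathcal{S}_{(A^{i-1})^\ast}$. Injectivity is immediate from Theorem \ref{t.NS.unique}: two preimages of the same datum solve \eqref{eq.NS.complex} with identical $(f,u_0)$, both lie in $C^{2,0,1,0}_{E^i}(\mathcal{X}_T)\cap\mathcal{S}_{(A^{i-1})^\ast}\times C^{0,0,0,0}_{E^{i-1}}(\mathcal{X}_T)\cap\mathcal{D}_{A^{i-1}}$ and satisfy \eqref{eq.p.ort} by the very definition of the domain, hence coincide. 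Connectedness of the range then follows because the domain is a linear (hence convex and connected) subspace of a Fr\'echet space and $\mathfrak{A}^i$ is continuous.

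The substantive part is openness of the range. Fix a point $(f^{(0)},u_0^{(0)})=\mathfrak{A}^i(u^{(0)},p^{(0)})$ in the range. I would introduce the continuous linear ``data map'' $D:(f,u_0)\mapsto\varPsi_\mu^i(\pi^i f,u_0)$ from the data Fr\'echet space into $C^\infty_{E^i}(\mathcal{X}_T)\cap\mathcal{S}_{(A^{i-1})^\ast}$; its continuity on each level is Lemma \ref{l.Helmholtz} composed with Lemma \ref{l.heat.key1} and Theorem \ref{t.heat.key2}, together with the Embedding Theorems \ref{t.emb.hoelder} and \ref{t.emb.hoelder.t}, keeping track of the one-derivative loss in $f$ and the extra derivative in $u_0$ that these results build in, exactly as in the proof of Lemma \ref{l.map.V0}. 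By Lemma \ref{l.reduce.psi.n}, $u^{(0)}=v^{(0)}$ solves $(I+\varPsi_\mu^{(i,v)}\pi^i\mathcal{N}^i)v^{(0)}=F^{(0)}$ with $F^{(0)}=D(f^{(0)},u_0^{(0)})$, so $F^{(0)}$ lies in the range of the map \eqref{eq.heat.pseudo.nn.infty2}, which is open by Corollary \ref{c.OpenMap.psi}; choose a neighbourhood $W$ of $F^{(0)}$ inside that range. Then $D^{-1}(W)$ is a neighbourhood of $(f^{(0)},u_0^{(0)})$, and for any $(f,u_0)$ in it there is $v\in C^\infty_{E^i}(\mathcal{X}_T)\cap\mathcal{S}_{(A^{i-1})^\ast}$ with $(I+\varPsi_\mu^{(i,v)}\pi^i\mathcal{N}^i)v=D(f,u_0)$; Lemma \ref{l.reduce.psi.n} then produces the pair $u=v$, $p=\varPhi_{i-1}(I-\pi^i)(f-\mathcal{N}^i v)$, which lies in the $C^\infty$ domain (since $\varPhi_{i-1}$, $\pi^i$ and $\mathcal{N}^i$ preserve $C^\infty$) and solves \eqref{eq.NS.complex}. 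Hence $(f,u_0)\in\operatorname{range}(\mathfrak{A}^i)$, and uniqueness of this $(u,p)$ follows again from Theorem \ref{t.NS.unique}.

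I expect the main obstacle to be purely the bookkeeping in the previous paragraph: verifying that the Banach-scale continuity of $D$ and of the reduction Lemma \ref{l.reduce.psi.n} genuinely assemble into Fr\'echet-continuous statements, because the natural seminorm families for the data space and for $C^\infty_{E^i}(\mathcal{X}_T)\cap\mathcal{S}_{(A^{i-1})^\ast}$ differ by shifts in the smoothness indices $s$, $k$ and by the extra derivative demanded of the initial value, and one must check that all these shifts are absorbed by the embedding theorems as in the proof of Lemma \ref{l.map.V0}. No genuinely new analytic difficulty arises: the regularity bootstrap is already packaged in Theorem \ref{t.imp.reg} and used inside Corollary \ref{c.OpenMap.psi}, and the uniqueness is Theorem \ref{t.NS.unique}.
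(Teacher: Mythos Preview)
Your proposal is correct and follows essentially the same route as the paper: continuity from the component lemmas (the paper cites only Lemma \ref{l.map.G}), injectivity from Theorem \ref{t.NS.unique}, and openness by pulling back through the reduction of Lemma \ref{l.reduce.psi.n} to the $C^\infty$ open-range result of Corollary \ref{c.OpenMap.psi}. Your connectedness argument (continuous image of a convex domain) is a slight variant of the paper's, which instead invokes the connectedness clause of Corollary \ref{c.OpenMap.psi}; both are valid, and yours is arguably more direct.
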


\begin{proof} The continuity of  map  follows from 
Lemma \ref{l.map.G}. Its injectivity follows from Uniqueness Theorem \ref{t.NS.unique}. 
 Finally, the statement on the open and connected range follows from 
Lemma \ref{l.reduce.psi.n} and Corollary \ref{c.OpenMap.psi}.
\end{proof}

According to Lemma \ref{l.reduce.psi.n}, an Existence Theorem for mapping 
\eqref{eq.heat.pseudo.nn} on the scale $\mathfrak{C}^{k,\mathbf{s} (s,\lambda,\lambda')} 
_{E^i}(\mathcal{X}_T) \cap \mathcal{D}_{A^{i}}$ immediately results on an Existence 
Theorem for the Navier-Stokes type Equations \eqref{eq.NS.complex}. Similar conclusion 
can be made on mapping \eqref{eq.heat.pseudo.nn.infty2} and 
the mappings from Corrolaries \ref{c.open.NS.short}, \ref{c.OM.infty}. 
Thus, Theorem \ref{t.OpenMap.psi} and Corollary 
\ref{c.OpenMap.psi} suggest a clear direction for the development of the topic:
 one can take into account the following property of the so-called clopen set.

\begin{corollary}
\label{c.clopen}
Let $s,k \in \mathbb N$  and $0 < \lambda < \lambda' < 1$. If the range of the mapping 
\eqref{eq.heat.pseudo.nn} (or \eqref{eq.heat.pseudo.nn.infty2} or 
the mappings from Corrolaries \ref{c.open.NS.short}, \ref{c.OM.infty}) 
is closed then it coincides with the whole destination space.
\end{corollary}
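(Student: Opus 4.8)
The plan is to apply the elementary topological dichotomy that a nonempty subset of a connected topological space which is simultaneously open and closed must coincide with the whole space.

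First I would treat the model mapping \eqref{eq.heat.pseudo.nn}, writing $T=I+\varPsi_\mu^{(i,v)}\pi^i{\mathcal N}^i$ and denoting its destination by
\[
{\mathcal Y}=\mathfrak{C}^{k,\mathbf{s}(s,\lambda,\lambda')}_{E^i}(\mathcal{X}_T)\cap\mathcal{D}_{A^{i}}\cap\mathcal{S}_{(A^{i-1})^*}.
\]
By Theorem \ref{t.OpenMap.psi}, $T$ is continuous, injective, Fredholm and, most importantly, \emph{open}; hence its range $R=T({\mathcal Y})$ is an open subset of ${\mathcal Y}$. Under the hypothesis of the corollary, $R$ is also closed in ${\mathcal Y}$. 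Moreover $R\neq\varnothing$: since ${\mathcal N}^i$ is built from the bilinear forms \eqref{eq.nonlinear.M} via \eqref{eq.nonlinear}, we have ${\mathcal N}^i(0)=0$, so $\varPsi_\mu^{(i,v)}\pi^i{\mathcal N}^i(0)=0$ and therefore $T(0)=0\in R$. Since the Banach space ${\mathcal Y}$ is convex, it is path-connected, hence connected, so the only nonempty open-and-closed subset of ${\mathcal Y}$ is ${\mathcal Y}$ itself. Thus $R={\mathcal Y}$, which is the claimed statement.

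The remaining cases reduce to exactly the same argument. For \eqref{eq.heat.pseudo.nn.infty2} one invokes Corollary \ref{c.OpenMap.psi} in place of Theorem \ref{t.OpenMap.psi}: the Fr\'echet space $C^\infty_{E^i}(\mathcal{X}_T)\cap\mathcal{S}_{(A^{i-1})^*}$ is again convex, hence connected, its range is open, and it contains $T(0)=0$. For the nonlinear maps ${\mathfrak A}^i$ of Corollaries \ref{c.open.NS.short} and \ref{c.OM.infty}, the destination spaces ${\mathfrak B}^{k,s,\lambda,\lambda'}_{i,2}$ (respectively its $C^\infty$-analogue) are convex, hence connected, the range is open by those corollaries, and it contains ${\mathfrak A}^i(0,0)=(0,0)$ — again because ${\mathcal N}^i(0)=0$; equivalently, one transports the conclusion through the bijective correspondence between solutions provided by Lemma \ref{l.reduce.psi.n}. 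In every case a nonempty clopen subset of a connected space is the whole space. The only point deserving any attention — and it is scarcely an obstacle — is the nonemptiness of the range, which is immediate from ${\mathcal N}^i(0)=0$; the rest is the standard clopen-in-a-connected-space dichotomy.
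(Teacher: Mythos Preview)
Your proof is correct and follows essentially the same route as the paper: convexity of the destination space gives connectedness, the range is open by the earlier open-mapping results and closed by hypothesis, so the clopen dichotomy forces it to be everything. You are slightly more careful than the paper in explicitly checking nonemptiness of the range via ${\mathcal N}^i(0)=0$, which the paper leaves implicit.
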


\begin{proof}
Since the destination space is convex, it is connected.
As is known, the only clopen (closed and open) sets in a connected topological vector space
are the empty set and the space itself. Hence, the range of the mapping $I + 
\varPsi_\mu^i \pi^i{\mathcal N}^i$ is closed if and only if it
coincides with the whole destination space. For other mappings the proof 
is similar. 
\end{proof}

As we noted in the introduction, the main motivative example of this paper is concerned with 
the Navier-Stokes type Equations associated with the de Rham complex $\{d^i, \Lambda^i \} $
over the  manifold ${\mathcal X}$, see, for instance, \cite[\S 1.2.6]{Tark95a}, where 
\begin{equation} \label{eq.M.deRham}
{\mathcal M}_{1,1} (w,u)= \star (\star w \wedge u),\,\, {\mathcal M}_{1,2} (v,u)
= \star (u\wedge \star v)/2,
\end{equation}
with $\star$ being the $\star$-Hodge operator acting on exterior differential forms. 
In this way,  taking the $3$-dimensional torus ${\mathbb T}^3$ as 
$\mathcal X$ and identifying $1$-differential forms on ${\mathbb T}^3$ with 
periodic vector fields over ${\mathbb R}^3$,  Corollaries \ref{c.open.NS.short} and 
\ref{c.OM.infty} are Open Mapping Theorems for the Navier-Stokes Equations 
in the so-called periodic setting, see \cite[Ch. 1, \S 2, \S 3]{Tema95} 
because in this situation we have 
\begin{equation*}
A^0=d^0=\nabla, \,\, A^1 = d^1=\mbox{rot}, \,\, (A^0)^* = (d^0)^* = -\mbox{div},\,\, 
(A^{-1})^*=(d^{-1})^*=0 
\end{equation*}
and  ${\mathcal N}^1 (v)$, defined by \eqref{eq.nonlinear} and \eqref{eq.M.deRham}, 
gives precisely \eqref{eq.Lamb}, see \cite{ShlTa18}. 

It is worth to note that no apriori estimates for solutions to the non-linear Navier-Stokes 
type equations were used to achieve the stability property -- it is sufficient to use the 
standard estimates for solutions to linear elliptic and parabolic equations in a uniqueness 
class for the original non-linear problem that is fit for any dimension $n\geq 2$. However, 
for the existence of  even  weak solutions to \eqref{eq.NS.complex} one should 
assume that the bilinear forms ${\mathcal M}_{i,1}$ have additional properties, see 
\cite{Lady70}, \cite{Tema79}, \cite{MTaSh19} for the properties of the so-called 
trilinear form. This means that the stability property is only a first step toward an 
Existence Theorem for regular solutions to \eqref{eq.NS.complex}.

\smallskip

\textit{Acknowledgments\,} 
The authors were supported by the grant of the Foundation for the Advancement of Theoretical 
Physics and Mathematics "BASIS"{}. We thank also Prof. 
N. Tarkhanov for an essential help in writing  section \S \ref{s.thoitwHs}.

\end{document}